\newcommand{\cI}{{\mathcal I}}
\newcommand{\codim}{{\rm codim}}
\newcommand{\Pic}{{\rm Pic}}
\newcommand\cE{{\mathcal E}}
\newcommand\cF{{\mathcal F}}
\newcommand\bZ{{\mathbb Z}}
\newcommand\bC{{\mathbb C}}
\newcommand\bQ{{\mathbb Q}}
\newcommand\bN{{\mathbb N}}
\newcommand\cO{{\mathcal O}}
\def\Re{\mathop{\rm Re}\nolimits}
\def\rank{\mathop{\rm rank}\nolimits}
\def\Alb{\mathop{\rm Alb}\nolimits}
\def\Pic{\mathop{\rm Pic}\nolimits}
\def\tor{\mathop{\rm tor}\nolimits}
\def\Div{\mathop{\rm div}\nolimits}
\def\Cod{\mathop{\rm codim}\nolimits}
\def\pr{\mathop{\rm pr}\nolimits}
\def\Supp{\mathop{\rm Supp}\nolimits}
\def\dbar{\overline\partial}
\def\ddbar{\partial\overline\partial}
\def\cO{{\mathcal O}}
\def\cE{{\mathcal E}}
\def\cF{{\mathcal F}}
\let\ol=\overline
\let\ep=\varepsilon
\let\wt=\widetilde
\let\wh=\widehat
\def\bQ{{\mathbb Q}}
\def\bC{{\mathbb C}}
\def\bZ{{\mathbb Z}}
\def\bN{{\mathbb N}}
\newcounter{lemma}
\renewcommand{\thelemma}{\strut\kern-3pt\arabic{section}.\arabic{lemma}}
\newtheorem{lemma1}[lemma]{\setcounter{equation}{0}}
\let\saveref=\ref
\def\ref#1{\strut\kern3pt{\saveref{#1}}}
\def\eqref#1{({\saveref{#1}})}
\newenvironment{lemma}{\begin{lemma1}{\bf Lemma.}}{\end{lemma1}}
\newenvironment{theorem}{\begin{lemma1}{\bf Theorem.}}{\end{lemma1}}
\newenvironment{proposition}{\begin{lemma1}{\bf Proposition.}}{\end{lemma1}}
\newenvironment{corollary}{\begin{lemma1}{\bf Corollary.}}{\end{lemma1}}
\newenvironment{remark}{\begin{lemma1}{\bf Remark.}\rm}{\end{lemma1}}
\newenvironment{claim}{\begin{lemma1}{\bf Claim.}}{\end{lemma1}}
\begin{document}
\title[]{Kodaira dimension of
algebraic fiber \\ spaces over abelian varieties}

\author{Junyan Cao and Mihai P\u aun}
\address{Junyan CAO, Universit\'e Paris 6 \\
Institut de Math\'{e}matiques de Jussieu\\
4, Place Jussieu, Paris 75252, France }
\email{junyan.cao@imj-prg.fr}

\address{Mihai P\u{a}un, Korea Institute for Advanced Study\\
85 Hoegiro, Dongdaemun-gu\\
Seoul 130-722, South Korea,}
\email{paun@kias.re.kr}
\date{\today}
\thanks{}

\begin{abstract} 
In this article we provide a proof of the Iitaka $C_{nm}$ conjecture for 
algebraic fiber spaces over tori. 
\end{abstract}
\maketitle

\section{Introduction}

\noindent Let $p: X\to Y$ be an algebraic fiber 
space, i.e. $X$ and $Y$ are non-singular projective manifolds and
$p$ is surjective with connected fibers. An important problem in birational geometry is 
the \emph{Iitaka conjecture}, stating that 
\begin{equation}\label{eq40}
\kappa(X)\geq \kappa(Y)+\kappa(X/Y)
\end{equation}
where $\kappa(X)$ is the Kodaira dimension of $X$, and $\kappa(X/Y)$ is the Kodaira dimension of 
a general fiber of $p$. 
\smallskip

In this article our goal is to show that \emph{the log-version of the inequality {\rm \eqref{eq40}} holds true, provided that
the base $Y$ is an abelian variety}; this generalizes the result obtained by Y.~Kawamata in 1982, cf.
\cite{Kawa82}. More precisely, our main theorem states as follows.

\begin{theorem}\label{jcmp}
Let $p: X\rightarrow A$ be an algebraic fiber space, where $A$ is an Abelian variety.
Let $\Delta$ be an effective $\bQ$-divisor such that the pair $(X, \Delta)$ is klt, and let $F$ be a generic fiber of $p$.
Then
\begin{equation}\label{neweqfin}
\kappa (K_X +\Delta) \geq \kappa (K_F +\Delta_F), 
\end{equation}
where $\Delta_F = \Delta|_F$.
\end{theorem}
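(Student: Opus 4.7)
The strategy is to study the direct image sheaves
\[
E_m := p_*\cO_X\bigl(m(K_{X/A}+\Delta)\bigr),
\]
taken for $m$ sufficiently divisible that $m\Delta$ is integral. Since $K_A = 0$ one has $E_m = p_*\cO_X(m(K_X+\Delta))$ and thus $h^0(A, E_m) = h^0(X, m(K_X+\Delta))$; moreover the generic rank of $E_m$ is $h^0(F, m(K_F+\Delta_F))$, which for $m$ divisible enough grows like $m^{\kappa(K_F+\Delta_F)}$. The whole question is therefore to convert this fibrewise positivity into a comparable asymptotic growth of global sections of $E_m$ on $A$.

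The core technical input I would rely on is a \emph{Chen--Jiang type decomposition} for $E_m$: up to replacing $X$ by a suitable birational model, one expects an isomorphism
\[
E_m \;\simeq\; \bigoplus_{i} \bigl(q_i^* \cF_i \otimes \alpha_i\bigr)
\]
over a Zariski-dense open subset of $A$, with $q_i : A \to B_i$ a surjective homomorphism of abelian varieties, $\cF_i$ an M-regular (in particular big) coherent sheaf on $B_i$, and $\alpha_i \in \Pic^0(A)$ of finite order. For $\Delta = 0$ this is due to Chen--Jiang and has been extended by Hacon--Popa--Schnell; the klt/pair version is precisely what the $L^2$ direct image and singular Hermitian metric techniques developed in the body of the paper are designed to produce.

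With such a decomposition in hand, the conclusion is extracted as follows. Let $d := \max_i \dim B_i$ and fix a summand $q^*\cF \otimes \alpha$ attaining this maximum. The M-regularity of $\cF$ on $B$ supplies at least $c \cdot k^{d}$ sections of $\Sym^k \cF$ on $B$ for some $c > 0$; pulling back to $A$ and applying the natural multiplication map $\Sym^k E_m \to E_{km}$, combined with a twist by a fixed power of $\alpha^{-1}$ to kill the torsion, yields at least $c \cdot k^d$ independent sections of $E_{kmN}$ for an appropriate fixed $N$. This already gives $\kappa(K_X+\Delta) \geq d$. Conversely, the asymptotic behaviour of the generic rank of $E_m$, together with rank bounds for summands pulled back from lower-dimensional quotients of $A$, should force $d \geq \kappa(K_F+\Delta_F)$ once $m$ is chosen sufficiently divisible; chaining the two inequalities then concludes the proof.

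The main obstacle --- and indeed the main content of the paper --- is establishing the Chen--Jiang decomposition in the klt/pair setting. The boundary $\Delta$ forces one to endow $m(K_{X/A}+\Delta)$ with a singular Hermitian metric with analytic singularities, and the positivity of $E_m$ must be recovered via a delicate $L^2$ analysis of the pushforward, especially above the locus of $A$ where $(X, \Delta)$ degenerates or fails to be klt. Once a sufficiently strong positivity property is in hand (ideally that $E_m$ is a GV-sheaf equipped with a positively curved Hermitian structure), the Fourier--Mukai / generic vanishing machinery on the abelian variety $A$ yields the desired splitting.
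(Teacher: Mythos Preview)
Your proposal takes a genuinely different route from the paper. The paper does \emph{not} establish a Chen--Jiang decomposition and does not invoke Fourier--Mukai or GV-sheaf machinery. Its argument is a dichotomy on the curvature of $\det F_m$, where $F_m = p_\star(m(K_{X/A}+\Delta))$ carries its canonical $L^2$ metric. After reducing to $A$ simple, either $\det F_m$ has nonzero curvature and is therefore big; then a Viehweg--Tsuji weak-positivity statement (Theorem~\ref{positivelemma}) shows that $K_{X/A}+\Delta$ dominates the pullback of an ample class modulo a divisor projecting in codimension $\geq 2$, and pluricanonical sections extend from the fibers. Or $\det F_m$ is flat; then Raufi's theorem forces $(F_m,\wt g_{X/A})$ to be Hermitian flat outside codimension two, yielding a unitary representation of $\pi_1(A\setminus\Sigma)=\pi_1(A)$. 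Since $\pi_1(A)$ is abelian this splits into characters: if the image is finite, a fixed power of every fiberwise section extends; if infinite, one character is non-torsion, Campana--Peternell gives $\kappa(K_X+\Delta)\geq 1$, and a Kawamata-style reduction (Proposition~\ref{simplecmn}) finishes. The singular-metric positivity the paper proves is Griffiths semipositivity of $F_m$, which is strictly weaker than the GV/M-regular input you assume; so your sentence ``the klt/pair version is precisely what the $L^2$ \dots\ techniques \dots\ are designed to produce'' mischaracterizes what the paper actually delivers.

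Your Chen--Jiang strategy is in principle viable --- it is how later work (Lombardi--Popa--Schnell, Jiang, Meng) treats and generalizes this circle of results --- but your extraction step contains a real gap. You set $d=\max_i\dim B_i$ for the decomposition of a fixed $E_m$, argue $\kappa(K_X+\Delta)\geq d$ via the multiplication $\Sym^k E_m\to E_{km}$, and then claim that rank growth forces $d\geq\kappa(K_F+\Delta_F)$. The last inequality is unjustified and can fail outright: nothing prevents all $B_i$ from being points ($d=0$) while $\rank E_m$ is arbitrarily large, since the rank of $q_i^\star\cF_i$ is $\rank\cF_i$ and has nothing to do with $\dim B_i$. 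In that extreme case $E_m$ is a sum of torsion line bundles and the conclusion follows \emph{directly} after an \'etale cover --- not through $d$. The correct way to conclude from a Chen--Jiang decomposition is to use that M-regular sheaves are continuously globally generated, pass to an \'etale cover trivializing the torsion twists $\alpha_i$, and deduce that $E_m$ becomes globally generated; then every section of $m(K_F+\Delta_F)$ extends to $X$, and the inequality follows by extension rather than by comparing dimensions of quotient tori.
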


We will give next a few hints about the 
proof, so as to situate our work in     
the impressive body of papers dedicated to this problem, cf. \cite{Caucher_1}, \cite{CauChen}, \cite{CH02},
\cite{CH09}, \cite{CH11}, \cite{Fujino}, \cite{Hacon}, \cite{Kawa82b}, \cite{konnar}, \cite{kon_pet}, 
\cite{Lai}, \cite{PS}, \cite{Tsu10}, \cite{Vie81}, \cite{Vie95} among many others.
The key ingredient of their proof is the positivity of direct 
image sheaves $\displaystyle p_\star(mK_{X/Y}+m \Delta)$ (notable exceptions to this statement are the works of 
Hacon-Chen cf. \cite{CH11} and Birkar-Chen \cite{CauChen}, respectively), where $m\in \bN$ such that $m\Delta$ is a line bundle.
Up to a certain point, the arguments presented here follow this \emph{main stream}.
\smallskip

\noindent Roughly speaking, we only have to deal with the following two extreme cases: either
the determinant of the direct image $\displaystyle p_\star(mK_{X/Y}+m\Delta)$ is big, or it is topologically trivial.
This is obtained as consequence of Theorem \ref{torus} in section 3.

In the first case we show that the relative canonical bundle $K_{X/Y}+\Delta$ is greater than 
the $p$-inverse image of an  ample divisor, up to a divisor whose image has codimension at 
least two in $Y$. This will allow us to 
extend pluricanonical sections from the fibers of $p$, and therefore proves Theorem \ref{jcmp}
in the case under discussion. The main result involved in this part of our proof is Theorem~\ref{positivelemma}, which is nothing but
a generalization of E. Viehweg's \emph{weak semistability} results, cf. \cite{Tsu10} \cite{Vie95}.

If the determinant of the direct image is flat, then we consider the direct image 
sheaf
$$p_\star(mK_{X/Y}+ m\Delta),$$
where $m$ is as above.
General results show the existence of a subset $\Sigma_0\subset Y$ 
whose codimension is at least two, such that the restriction 
$$\cE:= \displaystyle p_\star(mK_{X/Y}+ m\Delta)|_{Y\setminus \Sigma_0}$$
corresponds to a vector bundle. Now, if the vector bundle $\cE$ is e.g. trivial, then 
it is a simple matter to extend the pluricanonical sections  
$\displaystyle mK_{X/Y}+ m\Delta|_{X_y}$ defined on a general enough fiber $X_y$.
Of course, a priori we cannot expect this to happen; nevertheless,
we 
show that there exists 
an open set $\Sigma\subset Y$ whose codimension is at least two, such that 
$$\cE|_{Y\setminus \Sigma}$$ is a 
Hermitian flat vector bundle when endowed with a canonical $L^2$ metric (the so-called ``Narasim\-han-Simha").
This relies on the metric 
properties of the sheaves  $\displaystyle p_\star(mK_{X/Y}+m\Delta)$ (for which we refer to \cite{PT}), combined with a result of H.~Raufi concerning the existence of the curvature current corresponding to  
a singular Hermitian metric (see the results in \cite{Raufi}). 

We remark that the classification of flat vector bundles on elliptic curves
used in \cite{Kawa82} is replaced in this paper by the following considerations. 
The bundle $\cE$ considered above 
gives a unitary representation $\rho$ of rank $r$ of $\displaystyle \pi_1(A\setminus \Sigma)$. 
As we have already mentioned, the analytic set $\Sigma$ has codimension at least 2, so the 
fundamental group of its complement in $A$ equals $\pi_1(A)$, which is a free abelian group. 
If the image of $\rho$ is finite, then an appropriate power of any 
section of the restriction $\displaystyle (mK_{X/Y}+ m\Delta)|_{X_y}$
extends to $X\setminus p^{-1}(\Sigma)$. Moreover, the resulting section has finite $L^2$ norm, so it extends across
$p^{-1}(\Sigma)$. 
Next, a unitary representation of any free abelian group splits as a direct sum of representations of rank one. 
If the image of the representation $\rho$ is infinite, then one of the factors of the splitting will have the same property. In this case we conclude by 
using another crucial result
due Campana-Peternell cf. \cite[Thm 3.1]{CP11}, as well as the generalization in \cite{CKP12}. 

\medskip
 
\noindent Our article is organized as follows. In section two we will collect
some basic facts concerning the construction of metrics on relative pluricanonical bundles and 
their direct images, as well as a few results concerning singular Hermitian metrics on vector bundles. The main result we establish in section three is Theorem~\ref{positivelemma}; the 
techniques needed to prove it refine the arguments of E. Viehweg and H. Tsuji, among many others. 
The proof of the inequality \eqref{neweqfin} is completed in section four.  
As a complement to the techniques and results we obtain in this article, we establish in section five 
a version of \eqref{neweqfin} for an arbitrary algebraic fiber space $p:X\to Y$ for which the line bundle
$\det\displaystyle p_\star(mK_{X/Y}+m\Delta)$
is topologically trivial.

\medskip
\noindent {\bf Acknowledgements.} We owe a debt of gratitude to Bo Berndtsson, 
S\'ebastien Boucksom, Fr\'{e}d\'{e}ric Campana, Philippe Eyssidieux, Christopher Hacon, Andreas H\"oring, Zhi Jiang, Yujiro Kawamata,
Mihnea Popa, Hossein Raufi, Christian Schnell and Shigeharu Takayama for sharing generously with us their results and intuitions on the topics analyzed here. It is our pleasure 
to acknowledge the partial support we have benefited from the ANR project ``MACK" 
during the preparation of the present article. Last but not least, we would like to thank the anonymous referee
for constructive criticism and excellent suggestions who helped us to improve substantially the quality of this work.

\section{Positivity of direct images: a few techniques and results}

\medskip

\noindent The positivity results for direct images of twisted pluricanonical relative 
bundles are part of the main tools in our proof. 
 In this section we recall the construction of the Bergman metric, and some of its properties;
 we will equally collect a few results taken from \cite{BP1}, \cite{BP10}, \cite{PT}. Even if our ``language" is mostly analytic, a large part of the results here have counterparts/versions in algebraic geometry, cf. \cite{PS} and the references therein. 
 \smallskip  
 \subsection{The relative Bergman metric}
 
 \noindent Let $X$ and $Y$ be two projective manifolds, which are assumed to be non-singular.
 Let $p: X\to Y$ be a surjective map, and let $(L, h_L)\to X$ be a line bundle endowed with a Hermitian 
 metric $h_L$. We make the convention that unless explicitly mentioned otherwise, the metric in this article are allowed to be singular. As part of the set-up, we assume that we have
 \begin{equation}\label{equa10}
 \Theta_{h_L}(L)\geq 0
 \end{equation}
 in the sense of currents on $X$. By definition, this means the following: let $\Omega\subset X$ be 
 any trivialization subset for $L$, such that the restriction $h_L|_{\Omega}$ corresponds to the 
 metric $\vert\cdot \vert^2e^{-\varphi_L}$. Then \eqref{equa10} requires that $\varphi_L$ is psh, so that we have 
 $\sqrt{-1}\ddbar \varphi_L\geq 0$ in the sense of currents. 
 
 \noindent 
In this context we recall  the construction of the Bergman 
metric $\displaystyle e^{-\varphi_{X/Y}}$ on the bundle $K_{X/Y}+ L$; we refer to \cite{BP1} for further details. 

\noindent Let $Y_0$ be a Zariski open subset of $Y$ such that $p$ is smooth over $Y_0$, and for every $y\in Y_0$, the fiber $X_y$ satisfies 
$h^0 (X_y, K_{X/Y}\otimes L\otimes \mathcal{I} (h_L |_{X_y})) = \rank p_\star (K_{X/Y}\otimes L\otimes \mathcal{I} (h_L))$.
Let $X^0$ be the $p$-inverse image of $Y_0$ and let $x_0\in X^0$ be an arbitrary point; let $z^1,\dots, z^{n+m}$ be local coordinates centered at $x_0$, 
and let $t^1,\dots , t^m$ be a coordinate
centered at $y_0:= p(x_0)$. We can assume that $z^{n+j} =p^\star (t_j)$ for every $j$.
We consider as well a trivialization of $L$ near $x_0$. 
With this choice of local coordinates, we have a local trivialization of the tangent bundles of $X$ and $Y$ respectively, and hence of the (twisted) relative canonical bundle. 

\noindent The local weight of the metric $\displaystyle e^{-\varphi_{X/Y}}$ with respect to this is given by the equality

\begin{equation}\label{relative}
e^{\varphi_{X/Y}(x_0)}= \sup_{\Vert u\Vert_{y_0}\leq 1} |F_u (x_0)|^2
\end{equation}
where the notations are as follows: $u$ is a section of $\displaystyle K_{X_{y_0}}+ L|_{X_{y_0}}$, 
and $F_u$ is the coefficient of $dz^1\wedge \dots \wedge dz^{n+m}$
in the local expression of $u\wedge p^\star dt$. 
The norm which appears in the definition \eqref{relative} is obtained by the fiber integral
\begin{equation}\label{equa121}
\Vert u\Vert_{y_0} ^2:= \int_{X_{y_0}} |u|^2e^{-\varphi_L}.
\end{equation}
\medskip

\noindent An equivalent way of defining \eqref{relative} is via an orthonormal basis, say $u_1,
\dots , u_k$ of sections of $\displaystyle K_{X_{y_0}}+ L|_{X_{y_0}}$. Then we see that
\begin{equation}\label{on}
e^{\varphi_{X/Y}(x_0)}= \sum_{j=1}^N |F_j(x_0)|^2
\end{equation}
where $F_j$ are the functions corresponding to $u_j$. 

\noindent The Bergman metric $\displaystyle h_{X/Y}= e^{-\varphi_{X/Y}}$ can also be introduced in an
 intrinsic manner as follows. Let $\xi$ be a vector in the fiber over $x_0$ of the line bundle 
 $\displaystyle -(K_{X/Y}+ L)_{x_0}$.
 The we have
 \begin{equation}\label{equa11}
 \vert \xi\vert^2= \sup_{\Vert u\Vert_{y_0}\leq 1} |\langle \xi, u_{x_0}\rangle |^2.
 \end{equation}
This defines a metric on the dual bundle, which induces $h_{X/Y}$ on $K_{X/Y}+ L$.
\medskip

\noindent As we see from \eqref{on}, the restriction of the metric 
$e^{\varphi_{X/Y}}$ to the fiber $X_{y_0}$ coincides with the metric induced by any \emph{orthonormal basis}
of the space of holomorphic sections of $\displaystyle K_{X_{y_0}}+ L|_{X_{y_0}}$. Hence the variation from one fiber to another is in general a $\mathcal C^\infty$ operation, since the said orthonormalization process is involved. Thus it is a remarkable fact that this metric has positive curvature in the sense of currents on $X$.

\begin{theorem} {\rm (\cite[Thm 0.1]{BP1})}\label{rel1} The curvature of the metric $h_{X/Y}$ on the twisted relative canonical bundle $\displaystyle K_{X/Y}+ L|_{X^0}$ is positive in the sense of currents. Moreover, the local weights $\varphi_{X/Y}$ are uniformly bounded from above on $X^0$, so they admit a unique extension as psh functions.
\end{theorem}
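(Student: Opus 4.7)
The plan is to split the statement into two parts: (i) $\varphi_{X/Y}$ is locally bounded above on $X$ (including near $X\setminus X^0$), and (ii) $\varphi_{X/Y}$ is plurisubharmonic on $X^0$. Given both, since $X\setminus X^0$ is a proper analytic, hence pluripolar, subset, $\varphi_{X/Y}$ extends uniquely as a psh function on $X$ via upper semi-continuous regularisation of its trivial extension.

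For (i), on a compact subset $K\subset X^0$ the bound is a direct sub-mean-value estimate. Fixing $x_0\in K$ and local coordinates $(z,t)$ as in the statement, any competitor $u$ with $\|u\|_{y_0}\le 1$ corresponds to a holomorphic $F_u$ on $X_{y_0}$ with $\int_{X_{y_0}}|F_u|^2 e^{-\varphi_L}\le 1$. Since $\varphi_L$ is psh and hence locally bounded above, the sub-mean-value inequality for the psh function $|F_u|^2$ on a Euclidean ball $B\subset X_{y_0}$ of fixed radius around $x_0$ yields
\[
|F_u(x_0)|^2 \;\le\; \frac{1}{\Vol(B)}\int_B |F_u|^2 \;\le\; \frac{e^{\sup_B\varphi_L}}{\Vol(B)}\int_B |F_u|^2 e^{-\varphi_L} \;\le\; C',
\]
with $C'$ uniform on $K$. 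Taking the supremum over $u$ yields the bound on $K$. For $x_0$ approaching $X\setminus X^0$, one controls $|F_u(x_0)|$ by pairing $u$ against a local holomorphic section $\sigma$ of $K_{X/Y}+L$ near $x_0$ with $|\sigma|^2 e^{-\varphi_L}$ locally bounded, followed by Cauchy--Schwarz; the multiplier-ideal constancy condition built into the definition of $X^0$ is precisely what produces enough such $\sigma$.

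For (ii), the core input is Berndtsson's theorem on the psh variation of Bergman kernels in pseudoconvex families: when $p$ is a submersion and $\varphi_L$ is psh, the fiberwise Bergman weight defines a psh weight on $K_{X/Y}+L$ over the total space. In a submersion chart on $X^0$ around $x_0$ this directly gives joint plurisubharmonicity of $\varphi_{X/Y}$ in $(z,t)$; the intrinsic character of the construction (independent of trivialisations) means local statements patch to a global one on $X^0$. Equivalently, I would test psh by pulling back along an arbitrary holomorphic disc $f\colon\Delta\to X^0$ and applying the one-parameter version of Berndtsson's theorem to the pulled-back family.

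The main obstacle is step (ii): Berndtsson's theorem itself is the deep analytic input, resting on a H\"ormander / Ohsawa--Takegoshi $L^2$-estimate with a sharp constant, combined with a careful control of the $\bar\partial$-derivatives of an orthonormal basis $\{u_j\}$ of fiberwise sections via the fiberwise Bergman projection (so that these derivatives project to zero on the Bergman space and hence contribute only through the ``horizontal'' part of the connection). A secondary subtlety is the passage in (i) from boundedness on compacta of $X^0$ to boundedness near $X\setminus X^0$, where the fiberwise sub-mean-value estimate degenerates and one must substitute the duality argument with local sections described above.
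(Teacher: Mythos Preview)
The paper does not supply its own proof of this theorem: it is quoted verbatim from \cite[Thm~0.1]{BP1} and used as a black box. So there is no ``paper's proof'' to compare against; what you have written is a sketch of the argument from \cite{BP1} itself.

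Your outline for part (ii) is essentially on target: the plurisubharmonicity of $\varphi_{X/Y}$ on $X^0$ is exactly the content of Berndtsson's theorem on the psh variation of fiberwise Bergman kernels, in the generalized form of \cite{BP1} allowing singular $h_L$. Reducing to holomorphic discs and invoking the one-parameter version is also how \cite{BP1} proceeds.

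For part (i), your mean-value argument on compacta of $X^0$ is fine, but the step ``for $x_0$ approaching $X\setminus X^0$, pair $u$ against a local holomorphic section $\sigma$ of $K_{X/Y}+L$'' is where your sketch becomes vague and, as written, does not work. Pairing $u$ with $\sigma|_{X_y}$ gives a number, not a pointwise bound on $F_u(x_0)$, unless you have a full local frame of such $\sigma$'s whose fiberwise restrictions span---and the multiplier-ideal constancy in the definition of $Y_0$ does not furnish this. The argument actually used in \cite{BP1} is different and cleaner: one applies Ohsawa--Takegoshi to extend the normalized section $u$ from the fiber $X_{y_0}$ to a section $\widetilde u$ on a fixed Stein neighborhood in $X$ with uniformly bounded $L^2$ norm, and then applies the sub-mean-value inequality to $\widetilde u$ on a ball in $X$ (not in the fiber). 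This gives a bound that is uniform as $y_0$ approaches the discriminant, because the ball in $X$ no longer shrinks. You should replace your duality sketch with this extension-then-mean-value argument.
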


\begin{remark}\label{extensionremark}
The fact that the uniform boundness of $\varphi_{X/Y}$ on $X_0$ 
implies that it admits a unique extension is a standard result 
in pluripotential theory (cf.\ \cite{klimek}, page 52 and the references therein, as well as
\cite{Dem}, page 43-44) 
which we briefly recall now.
Let $X$ be a complex manifold (not necessarily compact) and let $Z$
be a complex subvariety in $X$. Let $\varphi$ be a psh function defined 
on $X\setminus Z$. The following assertions hold true. 
\smallskip

\noindent{\rm (i):} If $\codim_X (Z) \geq 2$, then $\varphi$ admits a unique extension as 
a psh function on $X$. 
\smallskip

\noindent{\rm (ii):} If $\codim_X (Z) \geq 1$ and $\varphi$ is uniformly bounded from above on $X\setminus Z$, then $\varphi$ admits a unique extension as 
a psh function on $X$.

\noindent We will use these two properties frequently in the article. 
\end{remark}

The definition \eqref{relative}, although not intrinsically formulated, is explicit 
enough so as to imply the following statement. Let $p:X\to Y$ be a dominant map, 
such that $X$ is K\"ahler; we denote by 
$\Delta$ the analytic set corresponding to the critical values of $p$. We assume that $\Delta$ is a snc divisor of $Y$, and we also assume that the $p$-inverse image of $\Delta$ equals
\begin{equation}\label{equa221}
\sum_{i\in I} e_i W_i
\end{equation}
where $e_i$ are positive integers, and $W_i$ are reduced hypersurfaces of $X$. 
\medskip

\noindent The next statement can be seen as a metric version of the corresponding results due to 
Y. Kawamata in \cite{Kawa98} and F. Campana in \cite{Ca04}, respectively.
\begin{theorem}\label{nonreduce}
Let $\Theta_{X/Y}$ be the curvature current corresponding to the Bergman metric {\rm \eqref{relative}}. Then we have 
\begin{equation}\label{equa222}
\Theta_{X/Y}\geq [\Sigma_p]:= \sum_{i\in I_h}(e_i-1)[W_i]
\end{equation}
in the sense of currents on $X$ where $I_h$ is the set of indexes $i\in I$ such that $p(W_i)$ is a divisor of $Y$. In particular,   
the current $\Theta_{X/Y}$ is singular along the multiple fibers of the map $p$.
\end{theorem}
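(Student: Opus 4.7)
The plan is to prove \eqref{equa222} by showing that at a generic smooth point of each horizontal component $W_i$ the Bergman weight $\varphi_{X/Y}$ behaves like $(e_i-1)\log|z_1|^2$ modulo a psh function that is locally bounded above; Siu's decomposition of the positive closed current $\Theta_{X/Y}$ then yields the global inequality. The key reduction is a local cyclic base change that turns $p$ into a smooth submersion, to which Theorem \ref{rel1} applies. Fix $i\in I_h$ and a point $x_0\in W_i$ that avoids all other components of $p^{-1}(\Delta)$ and at which $W_i$ is smooth. Since $p(W_i)$ is a divisor, say $\Delta_1\subset\Delta$, by the snc hypothesis we may choose local coordinates $(z_1,\dots,z_{n+m})$ on a neighborhood $U$ of $x_0$ and $(t_1,\dots,t_m)$ on a neighborhood $V\ni y_0=p(x_0)$ such that $W_i=\{z_1=0\}$, $p^\star t_1=z_1^{e_i}$, and $p^\star t_j=z_{n+j}$ for $j\geq 2$ (after absorbing the holomorphic unit in $p^\star t_1$ via an $e_i$-th root). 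Consider the cyclic base change $\pi_Y\colon\tilde V\to V$, $(\tau,t_2,\dots,t_m)\mapsto(\tau^{e_i},t_2,\dots,t_m)$, and let $\tilde U$ denote the irreducible component of $U\times_V\tilde V$ on which $\tau=z_1$. Then $\pi_X\colon\tilde U\to U$ is a biholomorphism and $\tilde p\colon\tilde U\to\tilde V$, $(z_1,\dots,z_{n+m})\mapsto(z_1,z_{n+2},\dots,z_{n+m})$, is a smooth submersion, so by Theorem \ref{rel1} the Bergman weight $\varphi_{\tilde X/\tilde Y}$ of $K_{\tilde U/\tilde V}+\pi_X^\star L$ is psh and locally bounded above on $\tilde U$.

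Now compare the two Bergman metrics. In the chosen coordinates, $K_{\tilde U/\tilde V}$ is generated by $dz_2\wedge\cdots\wedge dz_{n+1}$ while $K_{U/V}$ is generated by $\tfrac{1}{e_iz_1^{e_i-1}}\, dz_2\wedge\cdots\wedge dz_{n+1}$, since $p^\star(dt_1\wedge\cdots\wedge dt_m)=e_iz_1^{e_i-1}\,dz_1\wedge dz_{n+2}\wedge\cdots\wedge dz_{n+m}$. For any section $u\in H^0\bigl(X_{y_0},(K_{X/Y}+L)|_{X_{y_0}}\bigr)$, the fiber $L^2$-norms of $u$ computed via either relative-canonical frame agree (the fibers are canonically identified by $\pi_X$ and the metric $h_L$ is the same), whereas the coefficients $F_u$ in \eqref{relative} for $p$ and for $\tilde p$ differ precisely by the factor $e_iz_1^{e_i-1}$. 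Taking logarithms of the squared suprema in \eqref{relative} yields, on $U^0:=U\cap X^0$, the identity
\begin{equation*}
\varphi_{X/Y}(x)=\varphi_{\tilde X/\tilde Y}\bigl(\pi_X^{-1}(x)\bigr)+(e_i-1)\log|z_1(x)|^2+2\log e_i.
\end{equation*}

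Since $\varphi_{\tilde X/\tilde Y}$ is locally bounded above on $\tilde U$, the function $\varphi_{X/Y}-(e_i-1)\log|z_1|^2$ is psh and locally bounded above on $U^0$, and therefore extends uniquely as a psh function across $W_i\cap U$ by Remark \ref{extensionremark}(ii). Applying $i\partial\bar\partial$ yields the local inequality $\Theta_{X/Y}\geq(e_i-1)[W_i]$ on $U$. Consequently the generic Lelong number $\nu(\Theta_{X/Y},W_i)$ is at least $e_i-1$ for every $i\in I_h$, and Siu's decomposition of the positive closed current $\Theta_{X/Y}$ delivers \eqref{equa222}. The principal technical step is the Bergman-weight comparison above: one must verify that the supremum in \eqref{relative} is taken over the \emph{same} family of sections before and after the base change, with matching fiber $L^2$-norms, so that the entire discrepancy between the two weights is captured by the single frame factor $e_iz_1^{e_i-1}$.
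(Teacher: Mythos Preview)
Your overall strategy---bound the Lelong number of $\varphi_{X/Y}$ along each horizontal $W_i$ from below by $e_i-1$, then invoke Siu's decomposition---is exactly the paper's, but the cyclic base change is an unnecessary detour and your appeal to Theorem~\ref{rel1} is not justified as written. Theorem~\ref{rel1} concerns the Bergman weight of a fibration between \emph{projective} manifolds, whereas your $\tilde p:\tilde U\to\tilde V$ is a local submersion with non-compact fibers. If $\varphi_{\tilde X/\tilde Y}$ denotes the genuine Bergman weight of $\tilde p$ (supremum over $L^2$ sections on the open fibers), then your displayed formula cannot be an \emph{identity}: restrictions of global sections of $(K_{X/Y}+L)|_{X_{y_0}}$ form only a subfamily of the competitors, so you get only the inequality $\varphi_{X/Y}\le \varphi_{\tilde U/\tilde V}^{\rm loc}+(e_i-1)\log|z_1|^2+2\log e_i$. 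If instead you intend the ``hybrid'' weight (supremum over the \emph{same} global sections, as your last paragraph says), then the identity holds but Theorem~\ref{rel1} does not apply to this object: it is not the Bergman kernel of any fibration to which the theorem is stated, and the needed boundedness is precisely the point at issue.

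The paper bypasses the base change entirely. Writing the local form of $p$ as $(z_1,\dots,z_{n+m})\mapsto(z_{n+1}^{e_i},z_{n+2},\dots,z_{n+m})$, one has $F_u=e_i z_{n+1}^{e_i-1}f$ for the local fiber coefficient $f$ of $u$; the normalization $\|u\|_{X_t}=1$ forces $\int_{X_t\cap\Omega}|f|^2e^{-\varphi_L}\,d\lambda\le 1$, and the mean-value inequality for holomorphic functions gives $|f|\le C$ pointwise on a slightly smaller set, uniformly in $u$. Hence $|F_u|^2\le C|z_{n+1}|^{2(e_i-1)}$, i.e.\ $\varphi_{X/Y}\le(e_i-1)\log|z_{n+1}|^2+O(1)$, which is the required Lelong-number bound. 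Your argument can be repaired by replacing the identity with the inequality above and observing that $\varphi_{\tilde U/\tilde V}^{\rm loc}$ is locally bounded (Bergman kernel of a polydisc family)---but unpacking that statement is exactly the two-line $L^2$/mean-value estimate the paper gives directly.
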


\begin{proof}
Let $x_0\in W_1$ be a non-singular point of one of the sets appearing in \eqref{equa221}. 

We consider a coordinate set $\Omega$ containing the point $x_0$, and we fix the coordinates 
$\displaystyle (z_1,\dots , z_{n+m})$ on $\Omega$, such that $W_1\cap \Omega= (z_{n+ 1}= 0)$. The local structure of the map $p$ is as follows
\begin{equation}\label{eq1}
\big(z_1,\dots, z_{n+m})\to (z_{n+1}^{e_1}, z_{n+2},\dots, z_{n+ m}\big),
\end{equation}
so that we assume implicitly that $p(W_1)$ is given locally by $t_1=0$. 

The intersection $p^{-1}(t)\cap \Omega$ of the fibers of $p$ with the coordinate set $\Omega$ 
can be identified with an open set in $\bC^n$.  This allows us to bound the absolute value of the quantity which computes the Bergman metric locally at $x_0$, as we see next.

Let $t\in Y\setminus (t_1= 0)$ be a point near $p(x_0)$, and let $u$ be a section of the $\displaystyle K_{X_{t}}+ L|_{X_{t}}$ as in \eqref{equa121}. If $\|u\|_{X_{t}} ^2=1$, then
by the construction of $F_u$, we have
$$\int_{X_{t} \cap \Omega} \frac{|F_u|^2}{\left| z_{n+1}\right|^{2e_1 -2}}d\lambda 
 \leq \|u\|_{X_{t}} ^2 =1,$$
 where $d\lambda$ is the Lebesgue measure with respect to $z_1,
 \dots, z_n$. 
Combining this with \eqref{relative}, we have thus
\begin{equation}\label{eq2002}
\varphi_{X/Y}(z)\leq (e_1 -1)\log\vert z_{n+ 1}\vert^2+ O(1),
\end{equation}
where the quantity $O(1)$ in \eqref{eq2002} is uniform with respect to $z\in \Omega\setminus W_1$.
This shows that the Lelong number of $\varphi_{X/Y}$ at any generic point of $W_1$ is greater than $e_1 -1$.
As a consequence, we have
$$\Theta_{X/Y} \geq  (e_1 -1)[W_1],$$
and the proof is finished.
\end{proof}
\medskip


\bigskip

\noindent The construction of the metric $h_{X/Y}$ has a perfect pluricanonical analogue, as we recall next.
Let $u$ be a section of the bundle $\displaystyle mK_{X_y}+ L$, where $m\geq 1$ is a positive integer. 
Then we define 
\begin{equation}\label{eq41}
\Vert u\Vert^{\frac{2}{m}}_y:= \int_{X_y}\vert u\vert^{\frac{2}{m}}e^{-\frac{1}{m}\varphi_L},
\end{equation}
and the definition \eqref{equa11} generalizes immediately, as follows. Let $\xi$ be a vector in the fiber 
over $x$ of the dual bundle 
 $\displaystyle -(mK_{X/Y}+ L)_{x}$.
Then we have
\begin{equation}\label{eq44}
 \vert \xi\vert^2= \sup_{\Vert u\Vert_y\leq 1} |\langle \xi, u_{x}\rangle |^2.
\end{equation}
We denote the 
resulting metric by $h^{(m)}_{X/Y}$.
\medskip

\noindent We recall next the analogue of Theorem \ref{rel1}, as follows.

\begin{theorem} {\rm (\cite[Thm 0.1]{BP10})}\label{rel2} The curvature of the metric $\displaystyle h_{X/Y}^{(m)}$ 
on the twisted relative pluricanonical bundle $\displaystyle mK_{X/Y}+ L|_{X^0}$ is positive in the sense of currents. 
Moreover, the local weights $\varphi_{X/Y}$ are uniformly bounded from above on $X^0$, so they admit a unique extension as psh functions.
\end{theorem}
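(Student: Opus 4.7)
The plan is to reduce the statement to the $m=1$ case established in Theorem~\ref{rel1}, which already allows the twisting metric $h_L$ to be singular with positive curvature. The bridge is the following pluricanonical-to-canonical trick: given a local holomorphic section $u$ of $mK_{X/Y}+L$, set
$$\psi_u := \frac{m-1}{m}\log|u|^2 + \frac{1}{m}\varphi_L.$$
A direct bundle-bookkeeping check shows that $\psi_u$ is the local weight of a singular positively curved Hermitian metric on the line bundle $L' := (m-1)K_{X/Y}+L$, and it satisfies the pointwise identity
$$|u|^2 e^{-\psi_u} = |u|^{2/m} e^{-\varphi_L/m}.$$
Integrating over a fiber $X_y$ therefore converts the $L^{2/m}$ norm of $u$ (as a section of $mK_{X/Y}+L$) into the ordinary $L^2$ norm of $u$ viewed as a section of $K_{X/Y}+L'$ with respect to $e^{-\psi_u}$.

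With this identity I would establish an $L^{2/m}$ version of the Ohsawa--Takegoshi extension theorem: any unit $u_0\in H^0(X_{y_0}, mK_{X_{y_0}}+L)$ extends to a section $U$ on $p^{-1}(V)$, for a small neighborhood $V\ni y_0$, with uniformly controlled $L^{2/m}$ norm on nearby fibers. Granting this, the formula \eqref{on} allows one to write $\varphi^{(m)}_{X/Y}(x)$ locally as the upper-semicontinuous regularization of a supremum of the pluriharmonic functions $\log|F_U(x)|^2-\log\|U\|^2$, so that $\varphi^{(m)}_{X/Y}$ is psh on $X^0$. The uniform upper bound of $\varphi^{(m)}_{X/Y}$ on $X^0$ is the standard Bergman-kernel sub-mean-value estimate applied on each fiber, and the extension across $X\setminus X^0$ as a psh function then follows from part (ii) of Remark~\ref{extensionremark} (together with part (i) where the discriminant has codimension at least two).

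The main obstacle is the $L^{2/m}$-extension theorem itself: applying the classical $L^2$ Ohsawa--Takegoshi theorem to the pair $(K_{X/Y}+L',e^{-\psi_{u_0}})$ is mildly circular, because $\psi_{u_0}$ depends on the very section $u_0$ one wishes to extend, and it is quite singular along the zero set of $u_0$. Overcoming this requires an iteration/approximation scheme---one of the technical achievements of Berndtsson--P\u aun in \cite{BP10}---in which one first produces an extension using an auxiliary metric on $L'$ built from (a regularization of) $h^{(m)}_{X/Y}$ on $X_{y_0}$, then bootstraps via the pointwise identity above to recover the sharp $L^{2/m}$ bound; an equivalent route passes through the desingularized $m$-fold fiber product of $X$ over $Y$, where the $m$-th roots become honest top-degree forms and the $m=1$ theory of Theorem~\ref{rel1} applies directly.
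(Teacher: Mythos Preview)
The paper does not give its own proof of this statement: Theorem~\ref{rel2} is simply quoted from \cite{BP10} as an external input, with no argument supplied. There is therefore no ``paper's own proof'' to compare your proposal against.

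That said, your outline is a faithful sketch of the strategy in \cite{BP10}: the reduction of the $L^{2/m}$ problem to an $L^2$ problem via the auxiliary weight $\psi_u=\frac{m-1}{m}\log|u|^2+\frac{1}{m}\varphi_L$ on $L'=(m-1)K_{X/Y}+L$, the $L^{2/m}$-version of Ohsawa--Takegoshi, and the iterative/approximation scheme needed to break the circularity are exactly the ingredients used there. One small imprecision: the functions $\log|F_U(x)|^2-\log\Vert U\Vert^2_{p(x)}$ that you invoke are not \emph{pluriharmonic}, since the normalizing term depends on the fiber and is merely continuous; the point is rather that $\log|F_U|^2$ is psh and the extension theorem gives a uniform upper bound on $\Vert U|_{X_y}\Vert_y$, so that $\varphi^{(m)}_{X/Y}$ is locally bounded below by a family of psh functions with the correct value at the center---this is what yields the sub-mean-value inequality. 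With that adjustment your sketch is sound.
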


\begin{remark}\label{msing} {\rm
If the map $p$ verifies the hypothesis of Theorem \ref{nonreduce}, then we infer that 
\begin{equation}
\Theta_{h_{X/Y}^{(m)}}\big(mK_{X/Y}+ L\big)\geq m[\Sigma_p].
\end{equation}
The proof is the same as in Theorem \ref{nonreduce}: if the local structure of the map $p$ is as in \eqref{eq1}, then the 
$L^{2/m}$ normalization bound for the sections involved in the computation of the metric $h_{X/Y}^{(m)}$
imply that the local pointwise norm of these sections is bounded. The weights of the metric 
 $\displaystyle h_{X/Y}^{(m)}$ are given by the wedge product with $dt^{\otimes m}$, so the conclusion follows.
}
\end{remark}

\bigskip





\subsection{Singular metrics on vector bundles and direct image sheaves} 
We recall first the definition of singular Hermitian metrics on vector bundles cf. \cite{BP1}, \cite{Raufi} and \cite{Pau16}.
Let $E\rightarrow X$ be a holomorphic vector bundle of rank $r$ on a complex manifold $X$. We denote by 
$$H_r := \{A =(a_{i, \overline{j}})\}$$
the set of $r\times r$, semi-positive definite Hermitian matrices. Let $\overline{H}_r$ be the space of semi-positive, possibly unbounded Hermitian
forms on $\mathbb{C}^r$. A singular Hermitian metric $h$ on $E$ is given locally by a measurable map with values in $\overline{H}_r$
such that 
$$0 < \det h < +\infty $$
almost everywhere. In the definition above, a matrix valued function $h= (h_{i, \overline{j}})$ is measurable provided that all entries 
$h_{i, \overline{j}}$ are measurable.

This notion is somehow too general; in particular, it is impossible to define a
curvature current corresponding to it, as soon as the rank of $E$ is at least two, see \cite{Raufi}
for a clear example illustrating this. Nevertheless, as observed in \cite{BP1}, one can still formulate the notion of 
negativity/positivity in the sense of Griffiths: the bundle $(E, h_E)$ is \emph{negatively curved} if 
\begin{equation}\label{eq30}
x\to \log\vert u\vert_{h_E, x}^2
\end{equation}
is psh, for any choice of a holomorphic local section $u$ of $E$. The bundle $(E, h_E)$ is 
\emph{positively curved} 
if $(E^\star, h_E^\star)$ is negative.
\smallskip

\noindent It is important to notice the following consequence of the Griffiths negativity  
a\-ssumption for a singular Hermitian vector bundle $(E, h_E)$. Let $\xi$ be a local holomorphic section 
of $E$
defined on a coordinate 
open set $U\subset X$. Since the function
$$\vert \xi\vert^2= \sum_{\alpha, \beta} \xi_\alpha\ol{\xi_\beta} h_{\alpha\ol{\beta}}$$
is psh, in particular it follows that it is unambiguously defined at each point of $U$. We infer that the same is true for the coefficients 
$\big(h_{\alpha\ol\beta}\big)$. Moreover, the function $\vert \xi\vert^2$ is bounded from above on any 
relatively compact $U^\prime\Subset U$ so it follows that we have
\begin{equation}\label{eq42}
\sup_{z\in U^\prime}|h_{\alpha\ol\beta}(z)\vert\leq C.
\end{equation}

\medskip

\noindent The following result is a particular case of \cite{Raufi}; it gives a sufficient criteria 
in order to define 
the notion of \emph{curvature current} associated to $(E, h_E)$ which fits perfectly 
to what we will need later on in the paper. 

\begin{theorem} {\rm (\cite[Thm 1.6]{Raufi})}\label{singsing} 
Let $(E, h_E)$ be a positively curved singular Hermitian vector bundle of rank $r$. We assume that the induced metric 
$\det h_E$ on the determinant $\Lambda^r E$ of $E$ is non-singular. Then the coefficients of the Chern connection 
form $\theta_E:= h_E^{-1}\partial h_E$ belong to $L^2_{\rm loc}$. As a consequence, the curvature
current $\displaystyle \Theta_{h_E}(E)$ is well defined and it is moreover positive in the sense of Griffiths. 
Moreover, it can be written locally as $\dbar \theta_E$.  
\end{theorem}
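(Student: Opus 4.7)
The proof naturally splits into two stages: establishing that $\theta_E = h_E^{-1}\partial h_E$ has $L^2_{\mathrm{loc}}$ coefficients, and then that the resulting current $\Theta_{h_E}(E) := \dbar \theta_E$ is Griffiths semipositive.

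First I would work in a local holomorphic frame for $E$ and denote by $H = (h_{\alpha\bar\beta})$ the matrix of $h_E$. By hypothesis $\det H$ is smooth and strictly positive, and Cramer's rule gives the polynomial identities $H^{-1} = (\det H)^{-1}\mathrm{adj}(H)$ and $H = (\det H)\,\mathrm{adj}(H^{-1})$. Hence $H$ and $H^{-1}$ share the same regularity: one is locally bounded (resp. $W^{1,2}_{\mathrm{loc}}$) iff the other is. Griffiths positivity of $(E,h_E)$ means that for every constant vector $c = (c_\alpha)$ the function $\log\sum c_\alpha\bar c_\beta (H^{-1})_{\alpha\bar\beta}$ is psh, whence so is its exponential $\sum c_\alpha\bar c_\beta (H^{-1})_{\alpha\bar\beta}$. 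Polarizing in $c$ exhibits each entry of $H^{-1}$ as a $\bC$-linear combination of psh functions; the diagonal entries $(H^{-1})_{\alpha\bar\alpha}$ are bounded above by \eqref{eq42} and nonnegative, so all entries of $H^{-1}$, and by the polynomial identity also those of $H$, are locally $L^\infty$. A classical Bedford-Taylor type estimate then shows that each bounded psh function $\varphi$ lies in $W^{1,2}_{\mathrm{loc}}$, since the positive current $i\partial\varphi\wedge\dbar\varphi$ is well defined and locally finite. Applying this entrywise gives $H, H^{-1} \in L^\infty_{\mathrm{loc}} \cap W^{1,2}_{\mathrm{loc}}$, whence $\partial H \in L^2_{\mathrm{loc}}$ and $\theta_E = H^{-1}\partial H \in L^2_{\mathrm{loc}}$. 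In particular $\dbar\theta_E$ is well defined as a matrix of $(1,1)$-currents.

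To obtain Griffiths semipositivity of the curvature current, I would regularize $h_E$ by a family $h_\epsilon$ of smooth Griffiths-semipositive Hermitian metrics converging to $h_E$, so that the smooth curvatures $i\Theta_{h_\epsilon}(E)$ are classically Griffiths semipositive. Using the uniform bounds from the first stage one should obtain $\theta_\epsilon \to \theta_E$ in $L^2_{\mathrm{loc}}$, hence $\dbar\theta_\epsilon \to \dbar\theta_E$ in the sense of currents; Griffiths semipositivity, being a closed condition under weak convergence, is then inherited in the limit.

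The main obstacle is constructing the regularization. For line bundles the analogous step is trivial (convolve the psh weight), but for vector bundles naive entrywise convolution of $H$ does not preserve Griffiths positivity. The natural fix is to regularize on the dual bundle, where positivity translates into simultaneous plurisubharmonicity of the whole family $\{\log|\xi|^2_{h_E^*}\}_\xi$ of log-norm functions, a condition compatible with an appropriate joint convolution. Verifying that such a procedure yields a smooth Griffiths-positive metric whose Chern forms converge to $\theta_E$ in $L^2_{\mathrm{loc}}$ is the technical heart of the argument.
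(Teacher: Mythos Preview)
Your outline is correct and matches the approach the paper indicates. Note that the paper does not actually prove this statement: it is quoted from Raufi, and the only hint given in the text is the sentence following the theorem, namely that Raufi produces, locally near each point, a sequence of smooth Griffiths-positive metrics $h_{E,k}$ whose curvature forms converge to $\dbar\theta_E$. Your Stage~2 is exactly this approximation scheme, and you have correctly isolated its genuine difficulty: entrywise mollification of $H$ does not preserve Griffiths positivity, so one must mollify the dual metric and exploit that Griffiths seminegativity of $(E^\star,h_E^\star)$ is encoded by plurisubharmonicity of the family $\log|\xi|^2_{h_E^\star}$, a condition stable under convolution.

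Your Stage~1 goes further than the paper's discussion and is correct. The two Cramer identities $H^{-1}=(\det H)^{-1}\mathrm{adj}(H)$ and $H=(\det H)\,\mathrm{adj}(H^{-1})$, together with smoothness of $\det H$, do transfer $L^\infty_{\rm loc}\cap W^{1,2}_{\rm loc}$ regularity back and forth between $H$ and $H^{-1}$. Polarization writes every entry of the dual metric as a $\bC$-linear combination of functions $c^\star H^{-1}c$, each of which is psh (being $\exp$ of the psh $\log|c|^2_{h_E^\star}$), nonnegative, and locally bounded above by the argument leading to \eqref{eq42}; by Cauchy--Schwarz for the semidefinite matrix $H^{-1}$ the off-diagonal entries are then bounded as well. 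The passage ``bounded psh $\Rightarrow W^{1,2}_{\rm loc}$'' is the standard integration-by-parts estimate you mention. Hence $\partial H\in L^2_{\rm loc}$ and $\theta_E=H^{-1}\partial H\in L^2_{\rm loc}$, so that $\dbar\theta_E$ is a well-defined matrix of $(1,1)$-currents. This is precisely Raufi's route to the $L^2$ bound.
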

\medskip

\noindent We provide here a few explanations about the statement \ref{singsing}. The fact that 
$\displaystyle \Theta_{h_E}(E)$ is well defined as matrix-valued $(1,1)$-current means that locally on 
some coordinate set $U$ centered at some point
$x\in X$ we have
\begin{equation}\label{singcurv}
\Theta_{h_E}(E)|_U= \sum_{j, k, \alpha,\beta}\mu_{j\ol k\alpha\ol \beta}dz^j\wedge dz^{\ol k}e_\alpha\otimes e_\beta^\star
\end{equation}
where $\displaystyle \mu_{j\ol k\alpha\ol \beta}$ are \emph{measures} on $U$ (rather than smooth functions as in the 
classical case), $(e_\alpha)_{\alpha=1,\dots , r}$ is a local holomorphic frame of $E$ and $(z^i)_{i=1,\dots ,n}$ are local coordinates. 
The positivity in the sense of Griffiths we are referring to in Theorem \ref{singsing}
means that for any local holomorphic vector field $\displaystyle \sum v^j\frac{\partial}{\partial z^j}$ and 
for any local holomorphic section $\displaystyle \sum \xi^\alpha e_\alpha$, the measure
\begin{equation}\label{measure}
\sum \mu_{j\ol k\alpha\ol \beta}v^j\ol{v^k}\xi^\alpha\ol {\xi^\beta}
\end{equation}
is (real and) positive on $U$. The positivity of the measure \eqref{measure} is obtained by H. Raufi in
\cite{Raufi} by using an approximation procedure: he shows that under the hypothesis of Theorem \ref{singsing},
locally near each point of $X$
there exists a sequence of non-singular metrics $h_{E, k}$ such that $(E, h_{E, k})$ is Griffiths-positively curved 
(in the usual sense), and that the corresponding curvature form is converging to $\dbar \theta_E$.

\medskip

\noindent We will apply next this result in the context of direct images of twisted (pluri)ca\-nonical bundles.
The set-up is the same as in the previous subsection; let $Y_1\subset Y$ be the intersection of the 
set of regular values of $p$ with the maximal subset of $Y$ on which the direct image 
sheaf $p_{\star}(K_{X/Y}+L)$ is locally free.

\noindent The fiberwise canonical $L^{2}$-metrics 
\begin{equation}\label{equa12}
	g_{1,y}(u,u):=\int_{X_{y}} |u|^2 e^{-\varphi_L} \le +\infty
\end{equation}
for $u \in H^{0}(X_{y},K_{X_{y}}+L_{y})$ induces a singular Hermitian metric 
$g_{X/Y}$ on the bundle $\displaystyle p_{\star}(K_{X/Y}+L)|_{Y_1}$ whose curvature is positive (cf. \cite{Vie95}). 
The following result gives an important precision concerning this framework.

\begin{theorem}{\rm(\cite[Thm 3.3.5]{PT})}\label{ext} 
We suppose that the natural inclusion 
\begin{equation}\label{equa13}
p_{\star}\big((K_{X/Y}+L)\otimes {\mathcal I}(h_L)\big) \subset p_{\star}(K_{X/Y}+L)
\end{equation} is generically isomorphic.
Then the canonical $L^{2}$-metric $g_{X/Y}$ on the direct image 
$\displaystyle p_{\star}(K_{X/Y}+L)|_{Y_{1}}$ has positive curvature, 
and it extends as a singular Hermitian metric $\wt g_{X/Y}$ on the torsion free sheaf $p_{\star}(K_{X/Y}+L)$ with positive curvature.
We say that $\wt g_{X/Y}$ is the $L^2$ metric on $p_{\star}(K_{X/Y}+L)$ with respect to $h_L$.
\end{theorem}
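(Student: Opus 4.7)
My plan is to handle the two claims in turn, both relying on the integral formula \eqref{equa12}.

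For the positivity on $Y_{1}$: over this open set the morphism $p$ is smooth and the sheaf $p_{\star}(K_{X/Y}+L)$ is locally free, so $g_{X/Y}$ is a genuine Hermitian metric (of at worst measurable regularity when $h_L$ is singular). The hypothesis that the inclusion \eqref{equa13} is a generic isomorphism guarantees that local generators of the direct image are representable by $L^{2}$ forms, so the formula \eqref{equa12} is well-posed and captures the entire direct image sheaf. Berndtsson's theorem on curvature of direct images of twisted relative canonical bundles then applies and yields Nakano, and in particular Griffiths, positivity of $g_{X/Y}$ on $Y_{1}$.

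The extension to a singular metric $\wt g_{X/Y}$ on the full torsion free sheaf is the core of the proof. Write $Y\setminus Y_{1}=\Sigma_{1}\cup\Sigma_{2}$, where $\Sigma_{2}$ has codimension $\geq 2$ (absorbing the non-locally-free locus, which for a torsion free sheaf is of codimension $\geq 2$) and $\Sigma_{1}$ is the divisorial part of the discriminant of $p$. Remark \ref{extensionremark}(i) makes the extension across $\Sigma_{2}$ automatic, so the real task is to extend across a general smooth point $y_{0}\in\Sigma_{1}$. Near such $y_{0}$ I choose local generators $\sigma_{1},\dots,\sigma_{N}$ of $p_{\star}(K_{X/Y}+L)$ on a polydisc $V\ni y_{0}$. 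By the multiplier-ideal hypothesis, after shrinking $V$ one may assume that each $\sigma_{j}$ corresponds to a relative form on $p^{-1}(V)$ that is globally $L^{2}$ with respect to $e^{-\varphi_{L}}$. Fubini then makes $y\mapsto\|\sigma_{j}\|_{y}^{2}$ integrable on $V$, and its logarithm is psh on $V\cap Y_{1}$ by the first step. Remark \ref{extensionremark}(ii) delivers a psh extension across $\Sigma_{1}$ once we secure a local upper bound; dualizing (equivalently, inverting the extended Gram matrix of the $\sigma_{j}$) produces the desired singular Hermitian metric $\wt g_{X/Y}$, whose Griffiths positivity persists since it is encoded via plurisubharmonicity of local functions of the dual sections, a property preserved under extension.

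The main obstacle is precisely the local upper bound for $\|\sigma_{j}\|_{y}^{2}$ as $y$ approaches a generic point of the discriminant $\Sigma_{1}$, where the fibers $X_{y}$ may degenerate and, in the presence of multiple fibers, the local structure \eqref{eq1} could introduce the kind of singularities visible in \eqref{eq2002}. The multiplier-ideal hypothesis on \eqref{equa13} is exactly what prevents such blow-up of the fiber integrals: it forces the generators of the direct image to be $L^{2}$-controlled across $\Sigma_{1}$, so the sub-mean value property of the psh functions $\log\|\sigma_{j}\|_{y}^{2}$ on the smooth locus yields a uniform local upper bound. Once this is in hand, both the extension and the persistence of positive curvature follow from the standard pluripotential extension statements collected in Remark \ref{extensionremark}.
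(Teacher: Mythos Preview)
The paper does not prove this theorem; it is quoted from \cite[Thm 3.3.5]{PT} without argument, so there is no in-paper proof to compare against.

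Your extension step has the roles reversed. You try to bound $\|\sigma_j\|_y^2$ from above for local generators $\sigma_j$ of $E:=p_\star(K_{X/Y}+L)$, invoking that $\log\|\sigma_j\|_y^2$ is psh ``by the first step''. But Griffiths \emph{positivity} of $(E,g_{X/Y})$ means, per the definition around \eqref{eq30}, that the \emph{dual} $(E^\star,g^\star)$ is negatively curved: it is $\log|u|^2_{g^\star}$ that is psh for local holomorphic sections $u$ of $E^\star$, not $\log\|\sigma\|_g^2$ for sections of $E$ itself. Already in rank one, a positive line bundle with weight $e^{-\psi}$ gives $\log\|\sigma\|^2=\log|\sigma|^2-\psi$, which is not psh. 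So the sub-mean-value step is unavailable, and Fubini integrability of $\|\sigma_j\|_y^2$ by itself yields no local bound. More fundamentally, extending the positively curved metric means extending the psh functions $\log|u|^2_{g^\star}$ across $\Sigma_1$ via Remark \ref{extensionremark}(ii), and this requires an upper bound on the entries of $h^{-1}$, i.e.\ a \emph{lower} bound on the eigenvalues of the Gram matrix $h=(\langle\sigma_i,\sigma_j\rangle)$---the opposite of what you are controlling. Your ``invert the extended Gram matrix'' step would fail exactly where $\det h\to 0$. The argument in \cite{PT} works on the dual side throughout: one bounds $|u(y)|^2_{g^\star}=\sup_\sigma|\langle u,\sigma\rangle|^2/\|\sigma\|_y^2$ from above near $\Sigma_1$ by producing, via Ohsawa--Takegoshi $L^2$ extension, fiberwise sections $\sigma$ of controlled norm realizing the pairing---and this is where the multiplier-ideal hypothesis on \eqref{equa13} actually enters.
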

Since $\displaystyle p_{\star}(K_{X/Y}+L)$ is torsion free, it is locally free outside a set of codimension at least two, say $\Sigma\subset Y$.
Theorem \ref{ext} shows in particular that the restriction 
$\displaystyle \big(p_{\star}(K_{X/Y}+L)|_{Y\setminus \Sigma}, \wt g_{X/Y}\big)$ is a positively curved 
singular Hermitian vector bundle. 
\medskip

\noindent We consider next the line bundle $\det p_{\star}(K_{X/Y}+L)$; by this notation we mean the top exterior power of the direct image $p_{\star}(K_{X/Y}+L)$. Then we have the following 
consequence of the previous results.

\begin{corollary}\label{det} 
We assume that the hypothesis of Theorem {\rm \ref{ext}} are fulfilled, and that 
$\displaystyle p_{\star}(K_{X/Y}+L)$ is non-trivial.
Then the determinant line bundle 
$$\displaystyle \det p_{\star}(K_{X/Y}+L)$$ admits a singular hermitian metric whose 
curvature current $\Theta$ is positive. Moreover, we have the following statements.
\begin{enumerate}

\item[(a)] If $\Theta$ is a non-singular (1,1) form on some open subset $\Omega\subset Y\setminus \Sigma$, then the curvature
current of $\displaystyle p_{\star}(K_{X/Y}+L)|_\Omega$ is well-defined.
\smallskip

\item[(b)] If $\Theta$ vanishes on an open subset $\Omega^\prime\subset Y\setminus \Sigma$, then so does the full curvature tensor corresponding to $\displaystyle p_{\star}(K_{X/Y}+L)$. In this case the relative metric 
$\displaystyle \wt g_{X/Y}\vert_{\Omega^\prime}$ is smooth.
\end{enumerate}

\end{corollary}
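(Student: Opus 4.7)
The plan is to construct the positive-curvature metric on $\det p_\star(K_{X/Y}+L)$ directly from $\wt g_{X/Y}$ and then obtain (a) and (b) as two successive regularity statements. On $Y\setminus\Sigma$, where $p_\star(K_{X/Y}+L)$ is locally free, Theorem \ref{ext} supplies the Griffiths-positive singular Hermitian metric $\wt g_{X/Y}$. I would invoke the general principle (cf.\ \cite{Raufi}, \cite{Pau16}) that the top exterior power of a Griffiths-positive singular Hermitian vector bundle carries a metric with psh weight, a fact which rests on the local boundedness \eqref{eq42} of the matrix coefficients together with the defining positivity axiom. Since $\Sigma$ has codimension at least two, Remark \ref{extensionremark}(i) allows me to extend the psh weight of $\det\wt g_{X/Y}$ across $\Sigma$, yielding a singular Hermitian metric on $\det p_\star(K_{X/Y}+L)$ whose curvature current $\Theta$ is positive.

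For (a), on any open set $\Omega\subset Y\setminus\Sigma$ where $\Theta$ is smooth, the local weight $\varphi$ of $\det\wt g_{X/Y}$ is psh with $\sqrt{-1}\ddbar\varphi=\Theta$ smooth. Standard elliptic regularity for the Laplace operator forces $\varphi$ itself to be smooth, so $\det\wt g_{X/Y}|_\Omega$ is a non-singular metric. Applying Theorem \ref{singsing} to $(p_\star(K_{X/Y}+L)|_\Omega,\wt g_{X/Y})$ now yields a well-defined (Griffiths-positive) curvature current, as required.

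For (b), suppose $\Theta\equiv 0$ on $\Omega'\subset Y\setminus\Sigma$. By (a), the full curvature current $\Theta_{\wt g_{X/Y}}$ is a well-defined, Griffiths-positive matrix-valued $(1,1)$-current on $\Omega'$; write it in a local holomorphic frame as $(\Omega_{\alpha\bar\beta})$. The trace $\sum_\alpha\Omega_{\alpha\bar\alpha}$ is the curvature of $\det\wt g_{X/Y}$, which vanishes; since each diagonal entry is a positive $(1,1)$-current and their sum is zero, each $\Omega_{\alpha\bar\alpha}$ is zero. I would then polarize: applying Griffiths positivity to the constant sections $e_\alpha+t\,e_\beta$ for $t\in\bC$, the resulting scalar current $\bar t\,\Omega_{\alpha\bar\beta}+t\,\overline{\Omega_{\alpha\bar\beta}}$ must be positive for every $t$, and evaluating at $t\in\{\pm 1,\pm\sqrt{-1}\}$ forces $\Omega_{\alpha\bar\beta}\equiv 0$. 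Thus the full curvature tensor vanishes on $\Omega'$.

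It remains to establish the smoothness of $\wt g_{X/Y}|_{\Omega'}$, which is the most delicate part. By Theorem \ref{singsing} the Chern-connection form $\theta:=\wt g_{X/Y}^{-1}\partial\wt g_{X/Y}$ has $L^2_{\rm loc}$ coefficients, and $\dbar\theta=\Theta_{\wt g_{X/Y}}=0$. Weyl's lemma then promotes each coefficient of $\theta$ to a holomorphic function. Combined with the identity $\partial\theta+\theta\wedge\theta=0$ that holds automatically for the $(2,0)$-component of the Chern curvature, one integrates the linear equation $\partial\wt g_{X/Y}=\wt g_{X/Y}\cdot\theta$ starting from a smooth Hermitian datum (available because $\det\wt g_{X/Y}$ is already known to be smooth), and concludes that $\wt g_{X/Y}$ is smooth on $\Omega'$. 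The main obstacle is precisely this bootstrap from an $L^2$ connection and vanishing curvature current to smoothness of the metric itself, where one must carefully exploit the Chern-connection structure.
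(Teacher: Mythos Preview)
Your argument tracks the paper's proof closely: the construction of the determinant metric and its extension across $\Sigma$, the appeal to elliptic regularity plus Theorem~\ref{singsing} for (a), and the trace/polarization argument for the vanishing of the full curvature in (b) are all the same as in the paper.

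The one genuine weak point is the final smoothness step. Your proposal to ``integrate $\partial\wt g_{X/Y}=\wt g_{X/Y}\cdot\theta$ starting from a smooth Hermitian datum (available because $\det\wt g_{X/Y}$ is already known to be smooth)'' does not work as stated: smoothness of $\det\wt g_{X/Y}$ gives no pointwise value of the full matrix $\wt g_{X/Y}$, so there is no initial datum to integrate from. What the smoothness of the determinant \emph{does} buy you---and this is how the paper uses it---is local boundedness of $h:=\wt g_{X/Y}$ itself: the entries of the dual metric $h^\star$ are bounded by \eqref{eq42}, and since $\det h^\star=(\det h)^{-1}$ is smooth and bounded away from zero, the cofactor formula gives that the entries of $h$ are bounded too. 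With this in hand the paper argues more directly than you do: from $\dbar\theta=0$ the connection form $\theta$ is smooth (your ``Weyl's lemma'' step), hence $\partial h=h\cdot\theta$ holds with a smooth right factor and bounded $h$; applying $\dbar$ shows that $h$ satisfies an elliptic equation with smooth coefficients, and elliptic regularity finishes. If you want to keep your integration picture, it can be salvaged---use the integrability $\partial\theta+\theta\wedge\theta=0$ to produce a holomorphic $\Phi$ with $\Phi^{-1}\partial\Phi=\theta$, so that $h\Phi^{-1}$ is $\partial$-closed---but you still need the boundedness of $h$ to conclude that $h\Phi^{-1}$ is an honest antiholomorphic matrix rather than a distribution, so the missing ingredient is the same.
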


\begin{proof} 
The metric $\wt g_{X/Y}$ on the direct image induces a metric on the determinant bundle 
$\displaystyle \det p_{\star}(K_{X/Y}+L)|_{Y\setminus \Sigma}$
whose curvature is positive (and it actually equals the trace of the curvature of the direct image in the complement of an algebraic set). 
It is well-known that psh functions extend across sets of codimension at least two (cf. Remark \ref{extensionremark}), hence the the first part of the 
corollary follows. 
\smallskip

The statement (a) is a direct consequence of Theorem \ref{singsing}, 
because the metric induced on the 
determinant bundle on $\Omega$ is \emph{smooth}, by standard regularity results. 

\noindent As for part (b), we use 
Theorem \ref{singsing} again, and it implies that the restriction of the curvature current corresponding to
$\displaystyle p_{\star}(K_{X/Y}+L)|_{\Omega^\prime}$ is well-defined.
We establish its vanishing next; as we will see, it is a consequence of 
the positivity in the sense of Griffiths of the curvature of $\displaystyle p_{\star}(K_{X/Y}+L)$, 
combined with the fact that its trace $\Theta$ is equal to zero.
We remark at this point that is really important to have at our disposal the {curvature current} 
as given by Theorem \ref{singsing}, and not only the positivity in the sense of \eqref{eq30}.

A by-product of the proof of Theorem \ref{singsing} (cf. \cite[Remark 4.1]{Raufi}) 
is the fact that the curvature current $\Theta$ of $\det E$ is simply the trace of the matrix-valued 
current $\displaystyle \Theta_{h_E}(E)$. By using the notations \eqref{singcurv} at the beginning 
of this section, this is equivalent to the fact that
\begin{equation}\label{mp1}
 \sum_{j, k}\sum_\alpha\mu_{j\ol k\alpha\ol \alpha}dz^j\wedge dz^{\ol k}= 0.
\end{equation}
Since $\displaystyle \Theta_{h_E}(E)$ is assumed to be positive in 
the sense of Griffiths, we infer that the current  
\begin{equation}\label{mp2}
 \sum_{j, k}\mu_{j\ol k\alpha\ol \alpha}dz^j\wedge dz^{\ol k}
\end{equation}
is positive \emph{for each index $\alpha$}. When combined with \eqref{mp1}, this implies that
\begin{equation}\label{mp3}
\mu_{j\ol k\alpha\ol \alpha}\equiv 0
\end{equation} 
for each $j, k, \alpha$. But then we are done, since the positivity of 
$\displaystyle \Theta_{h_E}(E)$ together with \eqref{mp3} shows that for each pair of indexes $\alpha, \beta$
we have \begin{equation}\label{mp4}
\Re\Big(\xi^\alpha\xi^{\ol \beta}\sum_{j, k}\mu_{j\ol k\alpha\ol \beta}v^j v^{\ol k}\Big)\geq 0
\end{equation} 
(notations as in \eqref{measure}) which in turn implies that $\displaystyle \mu_{j\ol k\alpha\ol \beta}\equiv 0$
for any $j, k, \alpha, \beta$. The current $\displaystyle \Theta_{h_E}(E)$ is therefore identically zero.
\smallskip

\noindent The regularity statement is verified as follows. In the first place we already know that the coefficients of $h$
are bounded, where $h$ stands for the local expression of the metric $\wt g_{X/Y}$. This follows thanks to relation \eqref{eq42} which implies that the absolute value of the coefficients of the dual metric $h^\star$ is bounded from above, combined with the fact that 
the determinant $\det h$ is smooth. 

Since $\dbar$ of the connection form (= curvature current) is equal to zero,
it follows that the connection is smooth. Locally near a point of $\Omega^\prime$ we therefore have
\begin{equation}\label{eq4011}
\partial h= h\cdot \Psi
\end{equation}
where $\Psi$ is smooth. The relation \eqref{eq4011}
holds in the sense of distributions; by applying the $\dbar$ operator to it, we see 
that $h$ satisfies an elliptic equation. In conclusion, it is smooth. \end{proof}

\section{Some technical results}

\noindent We recall here a result due to E. Viehweg
which has been widely used in the previous works concerning the Iitaka conjecture...

\begin{proposition}\cite[Lemma 7.3]{Vie81}\label{vi1}
Let $p:X\to Y$ be a surjective map between two non-singular, projective manifolds. 
Then there exists a commutative diagram
\begin{equation*} 
\begin{CD}
    X^\prime @>{\pi_X}>> X \\
    @Vp^\prime VV      @VVpV  \\ 
    Y^\prime @>>{\pi_Y}>   Y
\end{CD}
\end{equation*} 
such that $X^\prime$ and $Y^\prime$ are smooth, the morphisms $\pi_X, \pi_Y$ are birational,
and moreover, each hypersurface $W\subset X^\prime$ such that $\Cod_{Y^\prime}p^\prime(W)\geq 2$
is $\pi_X$-contractible, i.e., $\Cod_{X} \pi_X (W)\geq 2$.
\end{proposition}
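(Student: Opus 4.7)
My approach would be based on Hironaka's flattening theorem combined with resolution of singularities. The key idea is to replace $Y$ by a birational model $Y'$ over which the strict transform of $p$ becomes flat; equidimensionality of the fibers of that strict transform will then force every non-$\pi_X$-exceptional prime divisor in $X'$ to project to a divisor (or to all) of $Y'$, which is exactly the contrapositive of the conclusion.

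First I would apply Hironaka's flattening theorem to $p:X\to Y$. This produces a projective birational morphism $\overline{Y}\to Y$, obtained as a sequence of blow-ups along subvarieties, such that the main component $\overline{X}$ of the fiber product $X\times_Y\overline{Y}$, endowed with its reduced structure, is flat over $\overline{Y}$. Composing with a resolution of singularities $Y'\to\overline{Y}$, we may assume $Y'$ is smooth; since flatness is preserved by base change, the main component $\widehat{X}\subset X\times_Y Y'$ is flat over $Y'$. Note that $\widehat{X}\to X$ is birational because $\pi_Y:Y'\to Y$ is. Finally, let $X'\to\widehat{X}$ be a resolution of singularities, and set $\pi_X:X'\to\widehat{X}\to X$ and $p':X'\to\widehat{X}\to Y'$. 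By construction $\pi_X$ and $\pi_Y$ are birational morphisms of smooth projective manifolds and the square commutes.

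To verify the property, I would argue by contrapositive: let $W\subset X'$ be a prime divisor which is \emph{not} $\pi_X$-exceptional, and show that $p'(W)$ has codimension at most $1$ in $Y'$. Denote by $\widehat{W}$ the image of $W$ in $\widehat{X}$. Since $\pi_X(W)$ is a divisor in $X$ and $\widehat{X}\to X$ is birational, $\widehat{W}$ must also be a divisor in $\widehat{X}$, of dimension $\dim\widehat{X}-1=\dim Y'+n-1$, where $n:=\dim X-\dim Y$ is the relative dimension. By flatness, $\widehat{X}\to Y'$ is equidimensional of relative dimension $n$: for every irreducible subvariety $Z\subset Y'$ the scheme-theoretic preimage $(p'|_{\widehat{X}})^{-1}(Z)$ has pure dimension $\dim Z+n$. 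Applying this to $Z=p'(\widehat{W})$ and using the inclusion $\widehat{W}\subset(p'|_{\widehat{X}})^{-1}(Z)$ yields $\dim Y'+n-1\leq\dim p'(\widehat{W})+n$, i.e.\ $\Cod_{Y'}p'(W)\leq 1$. This proves the contrapositive of the desired statement.

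The main obstacle I anticipate is the correct invocation of flattening: Hironaka's theorem provides flatness only for the main component, not for the entire fiber product, so one must consistently track that component and verify that the remaining irreducible components of $X\times_Y Y'$ (which lie above the flattening locus in $Y$) as well as the exceptional divisors introduced by the final resolution $X'\to\widehat{X}$ all map to subvarieties of codimension at least two in $X$, so that they count as $\pi_X$-contractible. This is automatic because the extra components of the fiber product are supported over proper subvarieties of $Y'$ that map to proper subvarieties of $Y$, and the resolution is an isomorphism over the smooth locus of $\widehat{X}$; both considerations do not affect the argument above.
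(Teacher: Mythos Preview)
Your approach is correct and is exactly the one the paper has in mind: it gives no proof of its own beyond the sentence ``the statement above is a quick consequence of Hironaka's flattening theorem'' and a reference to \cite[Lemma 7.3]{Vie81}. You have simply written out that quick consequence---flatten, resolve, and use equidimensionality of the flat family to run the contrapositive---so there is nothing to compare.
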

As we see in \cite[Lemma 7.3]{Vie81}, the statement above is a quick consequence of Hironaka's flattening theorem. 

\medskip

\begin{remark}\label{natural}
Let $\Delta$ be an effective klt $\bQ$-divisor on $X$. Then we have
$$\pi_X ^\star (K_X +\Delta) +E^\prime = K_{X^\prime} + \Delta^\prime$$
where $E^\prime$ is effective and $\pi_X$-exceptional, $\Delta^\prime$ is klt.
As a consequence, there exists a set $Z\subset Y$ such that $\Cod_Y Z\geq 2$ and such that we have an isometry
\begin{equation}
p^\prime_\star(mK_{X^\prime/Y^\prime} +m \Delta^\prime)\vert_{Y^\prime\setminus Z^\prime}\to \pi_Y^\star p_\star(mK_{X/Y}+m\Delta)|_{Y\setminus Z},
\end{equation} 
where $Z^\prime:= \pi_Y^{-1}(Z)$. 
Indeed, this is simply a consequence of the definition of the 
canonical $L^2$ metrics $\widetilde{g}_{X' /Y'}$ and $\widetilde{g}_{X/Y}$ on $p^\prime_\star(mK_{X^\prime/Y^\prime} +m \Delta')$ and $p_\star(mK_{X/Y}+m\Delta)$ respectively
(cf. the constructions in the beginning of Section 4), together with the properties of the maps/manifolds in Proposition \ref{vi1}.  

\noindent Moreover, by considering a further modification, 
we can assume that the singular locus $\Sigma$ of the fibration $p'$ is normal crossing and $p^{\prime -1} (\Sigma)$ is also normal crossing.
\end{remark}
\medskip

\medskip

\noindent We will recall now two results whose combination will reveal the strategy of our proof.
\begin{theorem}\cite[Chapter 3.3]{BL}, \cite{Cao15}\label{torus} Let $T = \bC^n/\Gamma$ be a complex torus of dimension $m$, 
and let $\alpha \in H^{1,1}(T, \bZ)$ be a pseudo-effective non trivial class. If $\alpha$ is not ample
then there exists a submersion 
\begin{equation}\label{equa0120}
\pi:T\to S
\end{equation}
to an abelian variety $S$ of dimension smaller than $m$ so that we have $\alpha = \pi^\star c_1(H)$ 
for some ample line bundle $H$ on $S$. 
Moreover, after passing to some finite \'{e}tale cover, the fiber of $\pi$ is also a torus.
\end{theorem}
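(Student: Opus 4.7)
My plan is to reduce the problem to linear algebra on the universal cover $V := \bC^m$ of $T$. First, since $\alpha$ is pseudo-effective, it is represented by some closed positive $(1,1)$-current on $T$. Averaging this current against Haar measure under the translation action of the compact group $T$ (which acts trivially on cohomology) produces a translation-invariant closed positive $(1,1)$-form $\omega$ in the class $\alpha$. Pulled back to $V$, the form $\omega$ corresponds to a positive semidefinite Hermitian form $H$ on $V$, and the Appell--Humbert theorem translates the integrality $\alpha \in H^{1,1}(T,\bZ)$ into the condition that $E := \mathop{\rm Im}\nolimits H$ takes integer values on $\Gamma \times \Gamma$.

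The heart of the argument is the analysis of the kernel $K := \{v \in V : H(v,v) = 0\}$. By Cauchy--Schwarz for the semipositive form $H$, the set $K$ is a complex linear subspace of $V$ on which $H$ vanishes identically. Nontriviality of $\alpha$ forces $K \neq V$, and non-ampleness forces $K \neq 0$ (else $H$ would be positive definite, making $\alpha$ ample by Riemann's theorem). Using the $J$-invariance $E(iv,iw) = E(v,w)$ one checks that $K$ coincides with the $\bR$-radical of $E$ viewed merely as an alternating $\bR$-bilinear form. The key lemma is then that $\Lambda_K := \Gamma \cap K$ is a full-rank lattice in $K$: since $E$ restricts to a $\bZ$-valued alternating form on $\Gamma$, it defines a $\bQ$-form on $\Gamma \otimes \bQ$; any $\bQ$-subspace of $\Gamma \otimes \bQ$ is generated over $\bQ$ by its integral points, so the $\bQ$-radical of $E$ on $\Gamma \otimes \bQ$ equals $\Lambda_K \otimes \bQ$, and scalar extension to $\bR$ identifies it with the $\bR$-radical $K$. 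Hence $\Lambda_K$ spans $K$ over $\bR$ with $\bZ$-rank equal to $\dim_\bR K$, and being discrete in $V$ is a lattice in $K$.

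Given this lemma the construction of $\pi$ is formal. Set $W := V/K$, endowed with the positive definite Hermitian form $\bar H$ induced by $H$. Extending a $\bZ$-basis of $\Lambda_K$ to a $\bZ$-basis of $\Gamma$ exhibits the image $\bar\Gamma \subset W$ of $\Gamma$ as a lattice, so $S := W/\bar\Gamma$ is a complex torus; the Riemann bilinear relations applied to the positive definite $\bar H$ with $\bZ$-valued imaginary part on $\bar\Gamma$ yield that $S$ is polarizable, hence an abelian variety, and that $\bar H$ is the first Chern class of an ample line bundle $L \to S$. The induced holomorphic group homomorphism $\pi : T \to S$ is a surjective submersion satisfying $\alpha = \pi^\star c_1(L)$; its fibers are translates of the subtorus $K/\Lambda_K$, hence already tori, and $\dim S = m - \dim_\bC K < m$. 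The finite \'etale cover mentioned in the statement is automatic in this construction; if needed for applications, one can further arrange via Poincar\'e reducibility for $K/\Lambda_K$ to split off as a direct factor after such a cover.

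The main obstacle is the lattice lemma for $\Lambda_K$, where the integrality hypothesis on $\alpha$ enters in an essential way; the rest follows by standard machinery (Appell--Humbert, Riemann's bilinear relations, Cauchy--Schwarz for semipositive Hermitian forms), and the only point where the analytic content of ``pseudo-effective'' is used is the initial averaging step that produces the semipositive Hermitian form $H$.
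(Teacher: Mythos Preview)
Your proof is correct and proceeds by a more elementary, self-contained route than the paper's sketch. The paper invokes the semiampleness results of \cite[Chapter~3.3]{BL} to produce a fibration associated to the line bundle $L$, and then argues via $\kappa(T)=0$ and a Stein-factorization consideration that, after a finite \'etale cover, the image is an abelian variety. You instead work directly on the universal cover: after the same initial averaging step, you use the Appell--Humbert description to reduce everything to the linear algebra of the semipositive Hermitian form $H$, and your key lattice lemma (that the radical $K$ of $H$ meets $\Gamma$ in a full-rank sublattice, forced by the integrality of $E=\operatorname{Im}H$) lets you build the quotient torus $S$ and its polarization $\bar H$ explicitly. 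This is in effect the Decomposition Theorem of \cite[3.3]{BL} unpacked by hand. What your approach buys is that the ample bundle on $S$, the submersion $\pi$, and the fact that its fibers are already translates of the complex torus $K/\Lambda_K$ (so no \'etale cover is actually needed for the last clause) all drop out of one construction; the paper trades this transparency for brevity by citing the structure theory as a black box.
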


\begin{proof}[Sketch of the proof]
Since the pseudo-effective class on the torus can be represented by a closed constant semipositive $(1,1)$-form, 
there exists a holomorphic line bundle $L$ on $T$ with a smooth hermitian
metric $h$ such that $c_1 (L) = \alpha \in H^{1,1}(T, \bZ)$ and $i\Theta_h (L) \geq 0$.
Since $L$ is not ample, by applying \cite[Chapter 3.3]{BL}, $L$ is semiample and defines a nontrivial fibration to a subvariety of the dual torus of $T$. 
Combining with the fact that $\kappa (T)=0$, after a finite \'{e}tale cover, the image of the fibration should be an abelian variety. 
The theorem is thus proved. 
\end{proof}

\medskip

\noindent The following statement originates in the seminal work of E. Viehweg, cf. Chapter~6 of \cite{Vie95} 
as well as Proposition 4.5 in \cite{Vie01}; the generalization
presented below is stated in the article by H.~Tsuji, \cite[Section 2.6]{Tsu10}. We will nevertheless 
provide a complete treatment here, for the sake of completeness. Also, we stress 
that in the next theorem the base $Y$ 
is not necessarily the modification of an abelian variety.

\begin{theorem}\label{positivelemma}
Let $f: X\rightarrow Y$ be a fibration between two projective manifolds.
Let $L$ be a $\bQ$-line bundle on $X$ endowed with metric $h_L$ whose corresponding curvature current is 
semi-positive definite and such that $e^{-\varphi_L}$ is $L^1$-integrable on $X$, where $\varphi_L$ is the potential of $h_L$.
Let $m\in \bN$ be a positive integer such that $m L$ is a line bundle, and let $\Sigma\subset Y$ be the singular locus of $f$. We 
assume that $\Sigma$ is snc and $f^{-1} (\Sigma)$ is a normal crossing divisor. Then
there exist a constant $\varepsilon_0 >0$ and an effective $\bQ$-divisor $F$ in $X$ satisfying
$\codim_Y f_\star (F) \geq 2$, such that
\begin{equation}\label{eq3} 
c_1 (K_{X/Y} + F + L)\geq \varepsilon_0 \cdot f^\star c_1 \big(\det f_\star (m K_{X/Y}+ m L)\big).
\end{equation}
\end{theorem}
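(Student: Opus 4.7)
The strategy adapts Viehweg's \emph{fibered product construction}, refined into the pointwise form pursued by Tsuji. Set $\cE := f_\star(mK_{X/Y} + mL)$ and $r := \rank \cE$. I would form the main component of the $r$-fold fibered product $X^{(r)}_\circ := X \times_Y \cdots \times_Y X$ and fix a resolution $\mu \colon X^{(r)} \to X^{(r)}_\circ$, with structure map $q \colon X^{(r)} \to Y$ and factor projections $p_i \colon X^{(r)} \to X$. By iteratively applying Proposition \ref{vi1} to the fibration $q$ (and further modifying $X^{(r)}$ and $Y$ if necessary), we may arrange that every divisor on $X^{(r)}$ whose $q$-image has codimension $\geq 2$ in $Y$ is $\mu$-exceptional, while preserving the snc hypotheses on $\Sigma$ and $f^{-1}(\Sigma)$.

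\textbf{The Pl\"ucker-type map on $X^{(r)}$.} The natural evaluation morphism $f^\star \cE \to m(K_{X/Y} + L)$, defined up to a codimension-two subset of $X$, pulls back via each $p_i$ and, by tensoring, yields a sheaf map
\[
q^\star \cE^{\otimes r} \longrightarrow m \sum_{i=1}^r p_i^\star(K_{X/Y}+L)
\]
on $X^{(r)}$. Precomposing with the natural antisymmetrization $q^\star \det \cE \hookrightarrow q^\star \cE^{\otimes r}$, we obtain a map of line bundles
\[
q^\star \det \cE \longrightarrow m \sum_{i=1}^r p_i^\star(K_{X/Y} + L).
\]
The crucial observation is that this map is \emph{nonzero}, hence generically injective as a map of line bundles: at a generic point $(x_1,\ldots,x_r)$ of $X^{(r)}$, with $x_i$ distinct points of a fiber $X_y$, the map evaluates to the determinant $\det (s_i(x_j))$ of an orthonormal basis $s_1,\ldots,s_r$ of $\cE_y$, which is nonzero for generic choices of $x_j$. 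This is precisely why one must pass to the fibered product: on $X$ itself, the analogous antisymmetrization collapses by commutativity of multiplication of sections. Interpreting this injection as an effective divisor $D$, and using the discrepancy formula $K_{X^{(r)}/Y} = \sum_i p_i^\star K_{X/Y} + E_\mu$ with $E_\mu$ supported on the $\mu$-exceptional locus, one obtains an equality of $\bQ$-divisor classes on $X^{(r)}$ of the shape
\[
\sum_{i=1}^r p_i^\star(K_{X/Y} + L) + E_\mu^{+} \;=\; \tfrac{1}{m}\, q^\star c_1(\det \cE) + \tfrac{1}{m} D + E_\mu^{-},
\]
where $E_\mu = E_\mu^{+} - E_\mu^{-}$ is the decomposition into effective and anti-effective $\mu$-exceptional parts.

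\textbf{Descent to $X$.} The concluding step is to transfer this comparison on $X^{(r)}$ to the required inequality on $X$. Since the left-hand side is a sum of $r$ identical pull-backs $p_i^\star(K_{X/Y}+L)$ and $q = f \circ p_i$, an averaging argument, for instance wedging with a power of a relative K\"ahler class in the $(r-1)$-fold relative product direction and pushing forward along $p_1$, transfers the inequality to
\[
c_1(K_{X/Y} + L + F) \;\geq\; \varepsilon_0\, f^\star c_1(\det \cE)
\]
on $X$, with $\varepsilon_0 = \tfrac{1}{rm}$ and $F$ the effective $\bQ$-divisor built from the descended images of $E_\mu^{+}$, the section divisor $D$ and the relative discrepancies. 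By the preparations in Step 1, $f(F)$ has codimension $\geq 2$ in $Y$.

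\textbf{Main obstacle.} The principal technical difficulty is precisely this descent step: one must track carefully how the exceptional divisors $E_\mu^{\pm}$ on $X^{(r)}$, which a priori could dominate divisors of $Y$, contribute to $F$, and verify via Proposition \ref{vi1} that they can be absorbed into a divisor whose $f$-image has codimension at least two. This hinges on the snc hypotheses on $\Sigma$ and $f^{-1}(\Sigma)$, which ensure that the singularities of the fibered product above $\Sigma$ are controlled enough for the averaging argument to yield vertical corrections only.
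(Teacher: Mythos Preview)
Your fibered-product setup and the Pl\"ucker-type section are correct and match the paper's proof exactly. The genuine gap is in your ``Descent to $X$'' step: the averaging argument you propose does not produce the stated inequality. If you push forward $p_i^\star(K_{X/Y}+L)\wedge\omega_{\mathrm{rel}}^{(r-1)n}$ along $p_1$ for $i\neq 1$, the result is not a multiple of $K_{X/Y}+L$; a local computation shows it is $f^\star$ of the class $f_\star\big((K_{X/Y}+L)\wedge\omega^n\big)$ on $Y$, which has no evident relation to $\det\cE$. So after averaging you obtain an inequality of the wrong shape, not $K_{X/Y}+L+F\geq \varepsilon_0 f^\star\det\cE$. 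A second problem is that you put the section divisor $D$ into $F$, but $D$ dominates $Y$ (it is the zero locus of a section that is generically nonzero on fibers), so $f(F)$ cannot then have codimension $\geq 2$. Finally, the exceptional divisors $E_\mu^\pm$ do not all have codim-$2$ image in $Y$: as the paper records in properties (a)--(b), some components project onto the multiple-fiber divisors $V_l$, and these must be dealt with by a completely separate mechanism that your outline omits.

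The paper's descent is entirely different and avoids push-forward. One takes the effective divisor on $X^{(r)}$ produced by the Pl\"ucker section, scales it by a small $\epsilon>0$ so that the pair is klt on generic fibers, and then invokes the extension theorem of \cite{BP10} to surject $H^0(X^{(r)}, kK_{X^{(r)}/Y}+k\widetilde\Delta+f^{(r)\star}A_Y)$ onto the fiber for all large divisible $k$. The resulting global sections on $X^{(r)}$ are then \emph{restricted to the diagonal} $X\hookrightarrow X^{(r)}$: this is the actual passage back to $X$, and it turns a section $u$ on $X_y$ into an extension of $u^{\otimes r}$. The components of $E_1,E_2$ lying over the multiple-fiber locus $V$ are absorbed not by Proposition~\ref{vi1} but by Theorem~\ref{nonreduce}, which says the Bergman metric curvature already dominates $\varepsilon_0[V_h]$; this is how the $V$-contribution disappears into $K_{X/Y}+L$ rather than into $F$. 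Letting $k\to\infty$ kills the auxiliary ample $A_Y$ and yields the pseudo-effectivity statement. Your outline is missing all three of these ingredients (extension theorem, diagonal restriction, Theorem~\ref{nonreduce}), and without them the argument does not close.
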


\medskip

\begin{remark}
In order to highlight the main ideas of the proof of Theorem \ref{positivelemma}, 
we first consider the following simplified case. We assume first that the fibration $f$ is smooth and $L=\mathcal{O}_X$.
In this case, there exists a natural morphism
\begin{equation}\label{naturalmorhi0case}
\det f_\star (m K_{X/Y} ) \rightarrow  \bigotimes^r f_\star (m K_{X/Y}) \qquad\text{on }Y  ,
\end{equation}
where $r$ is the rank of $f_\star (m K_{X/Y})$. We consider the fibered product 
\begin{equation}
X^r :=X\times_Y X \times_Y\cdots \times_Y X
\end{equation} 
corresponding to the map $f$.
Let $f^r : X^r \rightarrow Y$ be the natural induced fibration, and let
$\pr_i : X^r \rightarrow X$ be the projection on the $i$-th factor. We have
\begin{equation}\label{product0case}
K_{X^r /Y} =\bigotimes_{i=1}^r \pr_i ^\star (K_{X/Y})  \qquad\text{on } X^r. 
\end{equation}
As a consequence, by \eqref{naturalmorhi0case} we infer that we have a non zero section
\begin{equation}
s\in H^0 (X^r,  m K_{X^r /Y} - f^{r\star}\det f_\star (m K_{X/Y})) . 
\end{equation}
Let $\epsilon > 0$ small enough (which depends on $m$) such that if we define
$$\Delta =\epsilon \Div (s)$$ 
then the pair $\displaystyle \big(X^r, \Delta )$ is klt.

By the results in \cite{BP10}, there exists a 
very ample line bundle $A_Y\to Y$ such that for 
a generic point $y\in Y$, the restriction map 
\begin{equation}
H^0 \big(X^r , f^{r \star} A_Y + k K_{X^r  /Y}  +  k \Delta) \twoheadrightarrow 
H^0 \big(X^r _y ,  k K_{X^r /Y} + k \Delta) 
\end{equation}
is surjective for every $k$ sufficiently divisible. 
By restricting on the diagonal of $X^r$ and $X^r _y$ respectively, we have the maps
$$H^0 (X^r, f^{r \star} A_Y + k K_{X^r  /Y}  +  k \Delta) \rightarrow 
H^0 (X, r f^\star A_Y + k r(1+\epsilon m) K_{X/Y}  - kr\epsilon f^\star \det f_\star (m K_{X/Y})) ,$$
and 
$$H^0 (X^r _y , k K_{X^r /Y} + k \Delta) \twoheadrightarrow H^0 (X_y , k r(1+\epsilon m) K_{X/Y} ),$$
where the surjectivity above is a consequence of \cite{BCHM}.
In conclusion, we have the commutative diagram

\xymatrix{
H^0 \big(X^r , f^{r\star} A_Y + k K_{X^r  /Y}  +  k \Delta)  \ar[d] \ar@{->>}[r] & H^0 \big(X^r _y , k K_{X^r /Y} + k \Delta) \ar@{->>}[d]\\
H^0 (X, r f^\star A_Y +k r(1+\epsilon m) K_{X/Y} - kr \epsilon f^\star \det f_\star (m K_{X/Y}) ) \ar[r]
& H^0 (X_y , k r(1+\epsilon m) K_{X/Y} ).}

Therefore $r f^\star A_Y +k r(1+\epsilon m) K_{X/Y} - kr \epsilon f^\star \det f_\star (m K_{X/Y}) $ is effective for every $k$ sufficiently divisible.
By letting $k\rightarrow +\infty$, we obtain that 
$$ (1+\epsilon m)K_{X/Y} - \epsilon f^* \det f_\star (m K_{X/Y})$$ 
is pseudo-effective and the theorem is proved.
\end{remark}

\noindent We give now a complete proof of the theorem \ref{positivelemma}.

\begin{proof}[Proof of Thm \ref{positivelemma}]

Let $Y_0\subset Y$ be the maximal Zariski open set such that $\displaystyle f|_{f^{-1}(Y_0)}$ is flat and such that that the direct image $f_\star ( m K_{X/Y} +m L)$ is locally free when restricted to $Y_0$.
Then $\codim_{Y} (Y\setminus Y_0) \geq 2$, as is well-known (given that $X$ and $Y$ are non-singular).

\noindent By hypothesis, the inverse image of the discriminant of $f$ can be written as
\begin{equation}\label{eq4} 
f^\star \Sigma = \sum W_i + \sum a_i V_i,
\end{equation}
where $\sum W_i+ \sum V_i$ are snc and $a_i \geq 2$. Set $W := \sum W_i$ and $V:=\sum V_i$. 

Next, we see that there exists a natural morphism
\begin{equation}\label{naturalmorhi}
\det f_\star (m K_{X/Y} +m L) \rightarrow  \bigotimes^r f_\star (m K_{X/Y} +m L) \qquad\text{on }Y_0 ,
\end{equation}
where $r$ is the rank of $f_\star (m K_{X/Y})$.
In order to give a useful interpretation of \eqref{naturalmorhi}, we consider the fibered product 
\begin{equation}\label{eq5} 
X^r :=X\times_Y X \times_Y\cdots \times_Y X
\end{equation} 
corresponding to the map $f$ (as always, the convention here is that $X^r$ is the component of the 
fibered product \eqref{eq5} which maps surjectively onto $Y$).

Let $f^r : X^r \rightarrow Y$ be the natural induced fibration, and let
$\pr_i : X^r \rightarrow X$ be the projection on the $i$-th factor.
Set $X^r _0 := (f^r)^{-1} (Y_0)$ and $L_r := \bigotimes_{i=1}^r \pr_i ^\star (L)$.
According to the definition in \cite[Def 5.13]{Hor10}, the map $f$ is a flat Cohen-Macaulay fibration over $Y_0$. Moreover, the results established in 
\cite[Cor 5.24]{Hor10} show that we have the crucial equality 
\begin{equation}\label{product}
\omega_{X^r /Y} =\bigotimes_{i=1}^r \pr_i ^\star (K_{X/Y})  \qquad\text{on } X^r _0 . 
\end{equation}
Combining \eqref{product} with \cite[Lemma 3.17]{Hor10}, we infer that 
$$\bigotimes^r f_\star (m K_{X/Y} +m L)  \simeq (f^r) _\star ((\omega_{X^r /Y}\otimes L_r) ^{\otimes m}) \qquad\text{on }Y_0 .$$
As a consequence, by \eqref{naturalmorhi} we infer that we have
\begin{equation}\label{effe}
H^0 (X^r _0, (\omega_{X^r /Y} \otimes L_r)^{\otimes m} \otimes (f^{r\star}\det f_\star (m K_{X/Y} +m L))^{*} ) \neq 0 . 
\end{equation}

Let $\pi : X^{(r)} \rightarrow X^r$ be a desingularization of $X^r$ which is an isomorphism 
at non-singular points of $X^r$, and let $f^{(r)} := f^r\circ \pi$ be the map induced by $f^r$.
Set $X^{(r)} _0 := \pi^{-1} (X^r _0)$.

$$\xymatrix{
X^{(r)} \ar[d]_{\pi} \ar@/_3pc/[dd]_{f^{(r)}}\\
X^r \ar[r]^{\pr_i} \ar[d]_{f^r} &
X \ar[ld]^f \\
Y}
$$

By hypothesis $W$ is snc, so the variety $X^r _0$ is normal at each point of the subset 
$\displaystyle W_0^r := W \times_{Y_0} \cdots \times_{Y_0} W$;  moreover, it is Gorenstein with at most rational singularities. This is yet another consequence of 
\cite[Lemma 3.13, Thm 5.12]{Hor10}. 
\smallskip

\noindent We consider the canonical bundle $\displaystyle K_{X^{(r)}}$ of the manifold $X^{(r)}$; we will 
compare next its $\pi$-direct image with $\omega_{X^r}$. To this end we recall that there exists a non-zero 
morphism
\begin{equation}\label{comparison}
 \pi_\star\cO(K_{X^{(r)}})\to \omega_{X^r} 
\end{equation}
on $X^r$, cf. \cite[3.20]{Hor10}, which is moreover an isomorphism on the locus where $X^r _0$ is normal 
and has at most rational singularities.

The map \eqref{comparison} and the identity \eqref{product} show in particular the existence of a meromorphic section of the bundle
\begin{equation}\label{merom}
K_{X^{(r)}/Y}^{-1}\otimes \pi^\star\big(\bigotimes_{i=1}^r \pr_i ^\star (K_{X/Y})\big)
\end{equation}
whose zeroes and poles are contained in $X^{(r)}\setminus X^r _0$, together with the complement of the locus where
$X^r _0$ is normal 
and has at most rational singularities.

As a consequence, there exists a couple of effective divisors $E_1, E_2$ on $X^{(r)} $ such that we have
\begin{equation}\label{eq6}
K_{X^{(r)} /Y } +E_1 =\pi^\star \omega_{X^r /Y}+ E_2  \qquad\text{on }X^{(r)} _0 
\end{equation}
and moreover, each component $\Lambda$ of the support of $E_i$ belong to 
one of the following category.
\begin{enumerate}

\item[(a)] The $f^{(r)}$-image of the divisor $\Lambda$ is contained in $Y\setminus Y_0$, i.e. a set of codimension at least two.

\item[(b)] The codimension of the projection of $\Lambda$ by some maps 
$\pr_i\circ \pi$ is at least $2$, or it is equal to one of the $V_l$.
\end{enumerate}
These properties will be important for the rest of our proof.




\medskip

\noindent When combined with \eqref{effe} and \cite[III, Lemma 5.10]{Nak04}, the equality \eqref{eq6} shows that the bundle
\begin{equation}\label{eq7}
m K_{X^{(r)} /Y} + m \pi^\star L_r + m E_1 - f^{(r)\star} \det f_\star (m K_{X/Y}+ m L) + E_3
\end{equation} is effective, where $E_3$ is an effective divisor on $X^{(r)}$ which projects in codimension two, i.e. we have $(\pi\circ f^{(r)})_\star (E_3) \subset (Y\setminus Y_0)$.

Let $\epsilon > 0$ small enough (which depends on $m$ and $L_r$) such that if we define
$$\Delta =\epsilon (m K_{X^{(r)} /Y} +m \pi^\star L_r + m E_1 +E_3 - f^{(r)\star}\det f_\star (m K_{X/Y} +m L))$$ 
then the pair $\displaystyle \big(X^{(r)}_y, \Delta + \pi^\star L_r|_{X^{(r)}_y}\big)$ is klt for any $y\in Y$ 
in the complement of a set of measure zero.
Here, ``klt'' means that $e^{-2\ln |\Delta| -\sum_{i=1}^r (\pi\circ\pr_i) ^\star \varphi_L}$ is $L^1$-integrable 
on the fiber $X^{(r)}_y$. We set $\widetilde{\Delta} := \Delta + \pi^\star L_r$.

By the results in \cite{BP10}, there exists a 
very ample line bundle $A_Y\to Y$ such that for 
any point $y\in Y$ as above the restriction map 
\begin{equation}\label{surjadd}
H^0 \big(X^{(r)} , f^{(r) \star} A_Y + k K_{X^{(r)} /Y}  +  k \widetilde{\Delta}\big) \twoheadrightarrow 
H^0 \big(X^{(r)} _y , f^{(r) \star} A_Y + k K_{X^{(r)} /Y} + k \widetilde{\Delta}|_{X^{(r)} _y}\big) 
\end{equation}
is surjective for every $k$ sufficiently divisible. 

To simplify the notations 
we set $D_k = A_Y - k \epsilon\det f_\star (m K_{X/Y} +m L)$ and we have
\begin{equation}\label{addeq}
f^{(r)\star} A_Y + k K_{X^{(r)} /Y} + k \widetilde{\Delta} = f^{(r)\star} D_k  + k (1+ \epsilon m) (K_{X^{(r)}/Y}+\pi^\star L_r) + 
\epsilon k m E_1 +\epsilon k E_3. 
\end{equation}
Therefore the map \eqref{surjadd} becomes
\begin{equation}\label{surj1}
 H^0 (X^{(r)} , f^{(r)\star} D_k  + k (1+ \epsilon m) (K_{X^{(r)}/Y}+ \pi^\star L_r) + \epsilon k m E_1 +\epsilon k E_3) \twoheadrightarrow H^0 ( X^{(r)} _y, k K_{X^{(r)} /Y} + k \widetilde{\Delta}).
\end{equation}

\smallskip

\noindent As a consequence, we have the following crucial extension property.

\begin{claim}\label{r-ext}
There exists a constant $C> 0$ independent of $k$ such that for any section $u$ of the bundle 
 $\displaystyle k (1 +  \epsilon m) (K_{X/Y}+L)\vert_{X_y}$ there exists a section
\begin{equation}\label{importantrest}
U\in  H^0 (X , rf^\star D_k + r k (1 +\epsilon m) ( K_{X/Y}+L) + C k [V] + kF)
\end{equation}
whose restriction to the fiber $X_y$ is equal to $u^{\otimes r}$, where $F$ is an effective divisor on $X$ (independent of $k$) such that 
$\codim_Y f_\star (F) \geq 2$.
\end{claim}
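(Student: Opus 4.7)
The strategy is to lift $u$ to the desingularized fibered product $X^{(r)}$, extend globally via \eqref{surj1}, and restrict back along the diagonal $X \hookrightarrow X^r$. First, I would form the box-tensor $u^{\boxtimes r}$ on $X^r_y$, which by \eqref{product} is a section of $k(1+\epsilon m)(\omega_{X^r/Y} + L_r)|_{X^r_y}$. Pulling back via $\pi$ and using \eqref{eq6} to rewrite $\pi^\star \omega_{X^r/Y}$ as $K_{X^{(r)}/Y} + E_1 - E_2$, this yields a section on $X^{(r)}_y$ of $k(1+\epsilon m)(K_{X^{(r)}/Y} + \pi^\star L_r + E_1)$ vanishing along $k(1+\epsilon m) E_2$.

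To feed this section into \eqref{surj1}, I would multiply by an appropriate power of the canonical section from \eqref{eq7}; this reshapes the bundle into the required target $kK_{X^{(r)}/Y} + k\widetilde{\Delta}|_{X^{(r)}_y}$ by absorbing the $E_1, E_3$ coefficients and introducing the $f^{(r)\star}\det f_\star$ twist. The extension theorem \eqref{surj1} then produces a global section
\[ U^{(r)} \in H^0\bigl(X^{(r)}, f^{(r)\star}D_k + k(1+\epsilon m)(K_{X^{(r)}/Y}+\pi^\star L_r) + \epsilon km E_1 + \epsilon k E_3\bigr). \]
Next, restrict $U^{(r)}$ to the strict transform $\widehat{\Gamma} \subset X^{(r)}$ of the diagonal $\Gamma \subset X^r$; via $\pr_i \circ \pi|_{\widehat{\Gamma}}$ one obtains a birational identification $\widehat{\Gamma} \simeq X$. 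Since $\omega_{X^r/Y}|_{\Gamma} = K_{X/Y}^{\otimes r}$ and $L_r|_{\Gamma} = L^{\otimes r}$, the restriction of $U^{(r)}$ lies in a line bundle on $X$ of the form $rf^\star D_k + rk(1+\epsilon m)(K_{X/Y}+L)$ plus contributions from the restrictions of $E_1, E_3$; the $r$-factor in $rf^\star D_k$ arises naturally by working with $\bigotimes_i \pr_i^\star f^\star A_Y$ in place of $f^{(r)\star}A_Y$ in \eqref{surjadd} (admissible by \cite{BP10}). On $X_y$ the restriction equals $u^{\otimes r}$ by construction.

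The main obstacle is controlling the error divisors produced by the restrictions of $E_1$ and $E_3$. Using the classification of the components of $E_1$ given in the proof of Theorem \ref{positivelemma}, components of type (a) and all of $E_3$ project into $Y \setminus Y_0$ (codim $\geq 2$), and their restrictions to $\widehat{\Gamma}$ give an effective divisor $kF$ on $X$ with $\codim_Y f_\star F \geq 2$; components of type (b) whose image under some $\pr_i \circ \pi$ has codim $\geq 2$ in $X$ meet $\widehat{\Gamma}$ only in subvarieties sent into that codim $\geq 2$ locus and are likewise absorbed into $F$; the remaining type (b) components, which project onto a component $V_l$, contribute to $Ck[V]$, where $C$ depends only on the fixed coefficients of $E_1$ and the multiplicities $a_i$ from \eqref{eq4}, and is therefore independent of $k$. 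Two delicate points require care: verifying that the power of the \eqref{eq7}-section used in the reshaping step introduces no spurious vanishing on $X_y$ (so that $U|_{X_y} = u^{\otimes r}$ really holds), and performing the divisor bookkeeping cleanly, which relies on the snc hypothesis on $\Sigma$ and $f^{-1}(\Sigma)$ to separate the $[V]$-contribution from the codim-$\geq 2$ contribution.
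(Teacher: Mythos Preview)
Your proposal follows the same architecture as the paper: form the box-tensor $u^{(r)}$ on $X^{(r)}_y$, extend globally via \eqref{surj1}, then restrict to the diagonal and control the error divisors through the dichotomy (a)/(b). Two points deserve correction.

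First, the ``reshaping'' step is spurious. On a generic fiber $X^{(r)}_y$ the divisors $E_1,E_2,E_3$ all restrict to zero and the $f^{(r)\star}$-term is trivial, so $k(K_{X^{(r)}/Y}+\widetilde{\Delta})|_{X^{(r)}_y}$ \emph{is} the bundle $k(1+\epsilon m)(K_{X^{(r)}/Y}+\pi^\star L_r)|_{X^{(r)}_y}$; the section $u^{(r)}$ feeds straight into \eqref{surj1} without any multiplication. Your concern about ``spurious vanishing on $X_y$'' evaporates once this is seen.

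Second, and more substantively, your error analysis forgets $E_2$. After restricting $U^{(r)}$ to the diagonal you must convert $K_{X^{(r)}/Y}$ to $rK_{X/Y}$, and by \eqref{eq6} this introduces $(E_2-E_1)|_{\widehat\Gamma}$; the positive $E_2$-contribution (with coefficient $k(1+\epsilon m)$) has to be bounded by $C[V]+F$ exactly as you do for $E_1$. The paper makes this explicit by passing to the meromorphic section $s_{E_1}^k s_{E_2}^{-k(1+\epsilon m)}U^{(r)}$ of the product bundle $k(1+\epsilon m)\pi^\star\big(\bigotimes_i\pr_i^\star(K_{X/Y}+L)\big)+f^{(r)\star}D_k$ (cf.\ \eqref{eq24}), then restricts in local coordinates over $Y\setminus\Sigma$ and clears the poles by multiplying with $s_V^{ck}$, using the bound \eqref{eq23} on $E_1+E_2$ (not just $E_1$). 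Your strict-transform route is viable, but you must invoke the same classification for $E_2$, and then extend across $Y\setminus Y_0$ via \cite[III, Lemma 5.10]{Nak04} to produce the divisor $F$ (the paper does not simply ``absorb'' the codimension-two locus into $F$ by restriction; it extends the direct image sheaf).
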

\smallskip

\noindent We admit this statement for the time being, and we finish next the proof of Theorem \ref{positivelemma}.
By Claim \ref{r-ext} the bundle  
$$r f^\star D_k + r k (1 +\epsilon m) (K_{X/Y}+L) + C k [V] + kF\geq 0$$
is effective,
which is equivalent to say that
$$ r f^\star A_Y + r k (1 +\epsilon m) (K_{X/Y}+L) +C k [V] - r k \epsilon f^\star \det f_\star (m K_{X/Y}+mL) + kF$$
is effective as well. 
Thanks to Theorem \ref{nonreduce}, we have
$$K_{X/Y} +L \geq \varepsilon_0[V_h]$$
for some $\varepsilon_0 >0$, where $V_h$ is the divisor corresponding to the components of $V$
projecting in codimension one.
Therefore we have  
$$ r f^\star A_Y + (r k (1 +\epsilon m) + \frac{C k}{\varepsilon_0}) (K_{X/Y}+L) 
- r k \epsilon f^\star \det f_\star (m K_{X/Y}+m L) + k\widetilde{F}\geq 0 $$
for some effective divisor $\widetilde{F}$ satisfying $\codim_Y f_\star (\widetilde{F}) \geq 2$.
Theorem \ref{positivelemma} is thus proved by letting $k\to\infty$.
\end{proof}
\medskip

\noindent We establish next the claim.


\begin{proof}
The point $y\in Y$ is supposed to be generic, so we have the equality
\begin{equation}\label{eq20}
X^{(r)}_y= X_y\times\dots\times X_y
\end{equation}
where the number of the factors in the product \eqref{eq20} is $r$. Since $X_y$ is a smooth fiber, we have 
$$k(K_{X^{(r)}/Y}+ \widetilde{\Delta})+ f^{(r)\star} A_Y\vert_{X^{(r)}_y}  = k(1+\epsilon m) (K_{X^{(r)}/Y}+\pi^\star L_r) \vert_{X^{(r)}_y}$$
by construction. Thus the section $u$ we are given 
by hypothesis defines a section $u^{(r)}$ of the bundle 
\begin{equation}\label{eq21}
k(K_{X^{(r)}/Y}+ \widetilde{\Delta})+ f^{(r)\star} A_Y\vert_{X^{(r)}_y} 
\end{equation}
by considering the tensor product of the $\pr_i$-inverse images of $u$. 

\smallskip

The property \eqref{surj1} and the relation \eqref{addeq} show that there exists a section $U^{(r)}$ of the bundle
\begin{equation}\label{eq22}
f^{(r)\star} D_k + k(1+\epsilon m)(K_{X^{(r)}/Y} +\pi^\star L_r)+ \epsilon km E_1+\epsilon k E_3 , 
\end{equation}
extending $u^{(r)}$ and we show next that the ``restriction to the diagonal'' of $U^{(r)}$ satisfies all the 
properties required by the claim.
\smallskip

\noindent We recall that by properties (a) and (b), for any component $\Lambda$ of the divisor $\displaystyle E_i|_{X^{(r)} _0}$ there exists a projection 
$\pr_i\circ \pi$ for which the image of $\Lambda$ is contained in one of the $V_l$. In particular, we have
\begin{equation}\label{eq23}
E_1 +E_2 \leq C\sum_{i, l} (\pr_i \circ\pi)^\star V_l
\end{equation}
on the open set of $X^{(r)} _0$ whose complement projects in codimension greater than
2 via $\pr_i\circ\pi$. 

We consider next a covering of $X^{(r)}$ with coordinate open sets $\Omega_\alpha$ on which all 
our global bundles become trivial. Let $s_{E_i}$ be the canonical section corresponding to $E_i$.
The expression 
\begin{equation}\label{eq24}
\frac{s_{E_1} ^k }{s_{E_2}^{k+ \epsilon km}} \cdot U^{(r)}
\end{equation}
can be seen as a meromorphic section
of the bundle 
$$k(1+\epsilon m)\pi^\star\big(\bigotimes_{i=1}^r \pr_i^\star(K_{X/Y}+L)\big)+ f^{(r)\star} D_k$$
(via \eqref{eq6}) which is holomorphic on $f^{(r)}$-inverse image of the intersection of 
$Y_0$ with the set of regular values of $p$.
It corresponds to a collection of 
local meromorphic functions $(U^{(r)}_\alpha)$ for which the pole order is given by \eqref{eq24}.
 
Let $Y\setminus \Sigma$ be the set of regular values of $f$; then the fiber 
$\displaystyle X^{(r)}_{y_0}$ is equal to $\displaystyle  X_{y_0}\times\dots\times X_{y_0}$
for any $y_0\in Y\setminus \Sigma$. 
We consider the collection of \emph{holomorphic} functions 
\begin{equation}\label{eq25}
s_{V, \alpha}^{ck}U^{(r)}_\alpha\big(x,\dots, x, f(x)\big)
\end{equation}
which are defined on the diagonal subset of the product 
$\displaystyle  X_{y_0}\times\dots\times X_{y_0}$ intersected with 
$\Omega_\alpha\cap (f^{(r)})^{-1}(Y\setminus \Sigma)$. Here $s_{V, \alpha}$ denote the local equations 
corresponding to the divisor $V$, and $c$ is a constant which can be easily computed from \eqref{eq24} 
(we use the bound \eqref{eq23}).
The local holomorphic functions \eqref{eq25} glue together as a section $U$ of 
\begin{equation}\label{eq27}
rk(1+\epsilon m)(K_{X/Y}+L) + rf^{\star} D_k+ ckV
\end{equation}
except that it is only defined on some open set $X_0\subset X$ whose codimension in 
$X\setminus V$ is at least two. However, the relations \eqref{eq23} and 
\eqref{eq24} show that $U$ is bounded near the generic point of the support of $V  \cap f^{-1} (Y_0)$, so it extends to $f^{-1}(Y_0)$
and another application of \cite[III, Lemma 5.10]{Nak04} will end the proof, as follows. 

Indeed, the lemma in question shows that there exists a divisor $F$ on $X$, such that $f_\star(F)\subset Y\setminus Y_0$ and such that 
\begin{equation}\label{eq28}
f_\star\big(rk(1+\epsilon m)(K_{X/Y}+L) + ckV\big)^{\star\star}= f_\star\big(rk(1+\epsilon m)(K_{X/Y}+L) + ckV+ kF\big)
\end{equation}
where we denote by $\cF^{\star\star}$ the double dual of $\cF$. The sheaves we are dealing with are torsion free, and in this case \eqref{eq28}
combined with the projection formula show that we have
\begin{equation}\label{eq29}
f_\star\big(rk(1+\epsilon m)(K_{X/Y}+L)+ rf^{\star} D_k+ ckV\big)^{\star\star}= 
f_\star\big(rk(1+\epsilon m)(K_{X/Y}+L)+ rf^{\star} D_k+ ckV+ kF\big)
\end{equation}
(we are using here that $(\cF\otimes L)^{\star\star}= \cF^{\star\star}\otimes L$ for any locally free $L$ and torsion free $\cF$, respectively).
The claim is therefore established. 
\end{proof}

\begin{remark}\label{anyline}
The proof just finished shows that given any line bundle $P$ such that the sheaf
$$(\bigotimes^r f_\star(mK_{X/Y}+mL))\otimes P^{-1}$$
has a global (non-identically zero) section, then $\displaystyle c_1(K_{X/Y}+ L+ F)\geq \varepsilon_0 c_1(P)$.
However, the bundle $\det f_\star(mK_{X/Y}+m L)$ seems to be ``the best'' for what we have to do next.
\end{remark}
\bigskip

\noindent The following result is classical, cf. \cite{Kawa82} \cite{CH11}: it shows that in order to prove \eqref{neweqfin} 
it would be enough to establish the inequality $\kappa (K_X+\Delta) \geq \min \{1, \kappa (K_F +\Delta_F)\}$.

\begin{proposition}\label{simplecmn}
Let $p: X\rightarrow A$ be a fibration from a projective manifold to a simple Abelian variety $A$
(i.e. there is no strict subtorus in $A$) and let $\Delta$ be a klt $\bQ$-divisor on $X$. 
Let $F$ be a generic fiber of $p$.
If $\kappa (X+\Delta) \geq 1$, 
then 
$$\kappa (K_X +\Delta) \geq \kappa (K_F +\Delta_F) ,$$
where $K_F +\Delta_F = (K_X +\Delta)|_F$.
\end{proposition}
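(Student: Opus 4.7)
The plan is to take the Iitaka fibration of $K_X+\Delta$ and exploit the simplicity of $A$ to constrain its fibers. After a suitable birational modification I may assume the Iitaka fibration $\phi:X\to Z$ is a morphism with $\dim Z=\kappa(K_X+\Delta)=:z\geq 1$. Write $a=\dim A$ and $k=\kappa(K_F+\Delta_F)$; the goal is $z\geq k$.

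The structural input I would use is the following. For a general Iitaka fiber $X_z$ one has $\kappa(K_{X_z}+\Delta|_{X_z})=0$ by definition, and the pair $(X_z,\Delta|_{X_z})$ is again klt. Set $V_z:=p(X_z)\subset A$. By the log version of Kawamata's theorem on Albanese maps for klt pairs with vanishing Kodaira dimension, the Albanese map of $X_z$ is surjective; the morphism $X_z\to A$ factors through $\Alb(X_z)\to A$ by the universal property, so $V_z$ is the image of a homomorphism of abelian varieties (up to translation). Hence $V_z$ is a translate of an abelian subvariety of $A$, and by simplicity of $A$ must be either a single point or all of $A$. This dichotomy is the heart of the argument.

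If $V_z$ is a point for general $z$, then $p$ is constant on general Iitaka fibers and therefore factors as $p=\bar p\circ\phi$ for some surjective $\bar p:Z\to A$, so $z\geq a$. For a general $y\in A$, $F_y=\phi^{-1}(\bar p^{-1}(y))$ dominates $\bar p^{-1}(y)$ (of dimension $z-a$) with general fiber $X_z$. Easy addition for the Kodaira dimension then yields
\[
k \;\leq\; (z-a)+\kappa(K_{X_z}+\Delta|_{X_z}) \;=\; z-a \;\leq\; z,
\]
settling this case.

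The case $V_z=A$ is the main obstacle, and I would treat it by induction on $\dim X$: since $z\geq 1$ we have $\dim X_z<\dim X$, so applying Theorem \ref{jcmp} inductively to the fibration $X_z\to A$ (Stein-factorizing if necessary, which keeps the base a simple abelian variety) gives $\kappa(K_G+\Delta|_G)\leq \kappa(K_{X_z}+\Delta|_{X_z})=0$, where $G$ is a general fiber of $X_z\to A$. For a general $y\in A$ the same $G=X_z\cap F_y$ is the general fiber of the map $\phi|_{F_y}:F_y\to Z$, which is surjective precisely because $X_z\to A$ is. A final application of easy addition gives
\[
k \;=\; \kappa(K_{F_y}+\Delta_{F_y}) \;\leq\; z+\kappa(K_G+\Delta|_G) \;\leq\; z.
\]
Thus the proposition must be proved as part of a joint induction on $\dim X$ with the main theorem, and the delicate step is Case 2, which is exactly where the inductive hypothesis is consumed.
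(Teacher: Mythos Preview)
Your proof follows the same architecture as the paper's: take the Iitaka fibration $\phi:X\to Z$, study the image $V_z=p(X_z)\subset A$ of a general fiber, and combine easy addition with an induction on $\dim X$ carried out jointly with Theorem~\ref{jcmp}. The paper splits into three cases ($V_z=A$, $V_z$ a point, $V_z$ a proper positive-dimensional subvariety), eliminating the last via Ueno's theorem on subvarieties of simple abelian varieties together with log $C_{n,m}$ over a general-type base. Your dichotomy via the log Kawamata Albanese theorem is a clean repackaging of that same external input, so the two routes are essentially equivalent.

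There is, however, one genuine gap. The assertion ``Stein-factorizing if necessary, which keeps the base a simple abelian variety'' is not automatic: the Stein factorization of $X_z\to A$ produces a finite cover $\widetilde A\to A$, and $\widetilde A$ need not be abelian. The paper confronts this directly (its Case~1) by splitting on the codimension of the branch locus: if it is one, then the branch divisor on the simple torus $A$ is ample, so $K_{\widetilde A}$ is big and $\kappa(K_{X_z}+\Delta|_{X_z})\geq\dim\widetilde A\geq 1$, a contradiction; if it is at least two, the cover is \'etale and $\widetilde A$ is abelian, hence isogenous to $A$ and simple. Alternatively, if you invoke the \emph{full} log Kawamata statement---that $X_z\to\Alb(X_z)$ is an algebraic fiber space, not merely surjective---then the Stein factorization base of $X_z\to A$ is $\Alb(X_z)/(\ker\beta)^0$ with $\beta:\Alb(X_z)\to A$, visibly an abelian variety isogenous to $A$. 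Either route closes the gap, but as written you have only asserted surjectivity of the Albanese map, and that alone does not justify the claim.
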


\begin{proof}
 We use here an approach which goes back to Y. Kawamata, \cite[page 62]{Kawa82}. Modulo desingularization, we can assume that the Iitaka fibration of $K_X+\Delta$ is a morphism
$\varphi: X\rightarrow W$. 
$$\xymatrix{
X\ar[rr]^-{\varphi} \ar[rd]_{p}
& & W \\
& A }
$$
Let $G$ be the generic fiber of $\varphi$ and set $\Delta_G := \Delta |_G$. Then $K_G +\Delta_G = (K_X +\Delta) |_G$ and we have 
\begin{equation}\label{iitakafiber}
\kappa (K_G+ \Delta_G) =0 .
\end{equation}
Let $p: G \rightarrow p (G)$ be the restriction of $p$ on $G$. We will analyze next among three cases
which may occur. 

\medskip

\emph{Case 1: We assume that $p(G)=A$}; then we argue as follows.
Let $\widetilde{p} : G \rightarrow \widetilde{A}$ be the Stein factorisation of $p : G \rightarrow A$:
$$\xymatrix{
G\ar[rr]^-{p} \ar[rd]_{\widetilde{p}}
& & A \\
& \widetilde{A} \ar[ru]_s}
$$
After some desingularization $\widetilde{p}$, we can assume that $\widetilde{A}$ is smooth. 
There are two subcases: 

{\em Subcase 1:} The ramification locus of $s : \widetilde{A}\rightarrow A$ is of codimension $1$ in $A$. 
Let $[E]$ be the divisor corresponding to the ramification locus. 
Since $A$ is a simple torus, $[E]$ is an ample divisor on $A$. Therefore $K_{\widetilde{A}}$ is big. In this case, it is well known that 
$\kappa (K_G +\Delta_G) \geq \dim \widetilde{A} \geq 1$. We get a contradiction with \eqref{iitakafiber}.

{\em Subcase 2:} The ramification locus of $s: \widetilde{A}\rightarrow A$ is of codimension at least $2$ in $A$. As the Stein factorisation 
extend uniquely over closed analytic subsets of codimension $\geq 2$, $\widetilde{A}$ is thus an abelian variety. 

Let $t\in \widetilde{A}$ be a generic point. Let $G_t$ be the fiber of $\widetilde{p}$ over $t$
and set $\Delta_{G_t} := \Delta|_{G_t}$.
By induction, \eqref{iitakafiber} implies
\begin{equation}\label{addedfirstpar}
\kappa (K_{G_t}+ \Delta_{G_t})=0 . 
\end{equation}

We next estimate the dimension of $G$. 
Let $F$ be the fiber of $p : X\rightarrow A$ over $s(t) \in A$. 
Then $F$ is a generic fiber.
By restricting $\varphi$ on $F$, we obtain a morphism
$$\varphi_t: F \rightarrow V$$
where $V$ is a subvariety of $W$.
Let $\widetilde{V} \rightarrow V$ be the Stein factorisation of $\varphi_t$.
$$\xymatrix{
F\ar[rr]^-{\varphi_t} \ar[rd]_{\widetilde{\varphi}_t}
& & V\\
& \widetilde{V} \ar[ru] }
$$
Since $G$ is generic, we infer that the fiber of $\widetilde{p} : G \rightarrow \widetilde{A}$ over $t$ coincides with a generic fiber of $\widetilde{\varphi}_t$.
Combining this with \eqref{addedfirstpar}, then \cite[Thm 5.11]{Uen} implies that 
$$\kappa (K_F +\Delta_F) \leq \dim \widetilde{V} = \dim F -\dim G_t .$$
Therefore we have
$$\dim G_t \leq \dim F -\kappa (K_F +\Delta_F)$$
and thus we infer that
$$\dim G = \dim G_t +\dim \widetilde{A} \leq \dim F -\kappa (K_F +\Delta_F) +\dim A = \dim X -\kappa (K_F +\Delta_F) .$$

Finally, by construction of the Iitaka fibration, $\dim G = \dim X -\kappa (K_X +\Delta)$;
we obtain the inequality
$$\dim X -\kappa (K_X +\Delta) \leq \dim X -\kappa (K_F +\Delta_F),$$
and in conclusion $\kappa (K_X +\Delta) \geq \kappa (X_F +\Delta_F)$.

\medskip

\emph{Case 2: We assume that the image $p(G)$ has dimension zero.}
Since $G$ is connected, $p (G)$ is a point in $A$.
This means that we can define a map $W\to A$, which can be assumed to be regular by blowing up $W$. We have thus the commutative diagram
$$\xymatrix{
X\ar[rr]^-{\varphi} \ar[rd]_{p}
& & W\ar[ld]^q \\
& A }
$$
Set $t:= p (G)$.
Let $F$ be the fiber of $p$ over $t$. Then $F$ is a generic fiber of $p$ and $G$ is a generic fiber of 
$$\varphi: F \rightarrow \varphi (F),$$
and by \cite[Thm 5.11]{Uen} we infer that 
$$\kappa (K_F +\Delta_F) \leq \kappa (K_G +\Delta_G) + \dim \varphi (F) = \dim \varphi (F).$$
Note that $\varphi (F)$ is the fiber of $q$ over $t\in A$.
We have $\dim W = \varphi (F) + \dim A$.
Therefore $\dim W \geq  \kappa (K_F +\Delta_F) +\dim A$.
Combining this with the fact that $\varphi$ is the Iitaka fibration, we have thus 
$$\kappa (K_X +\Delta)=\dim W \geq \kappa (K_F +\Delta_F) +\dim A ,$$
and we are done.

\medskip

\emph{Case 3: The remaining case: $p (G)$ is a proper subvariety of $A$}.
Since we are assuming that $A$ is simple, by \cite[Cor 10.10]{Uen}, any desingularization of $p(G)$
is of general type. In this case, it is well known that $\kappa (K_G +\Delta_G) \geq \dim p (G)\geq 1$. We get a contradiction.
\end{proof}

\section{Proof of the main theorem}

\noindent After the preparations in the previous sections, we will 
present here the arguments for our main result. To start with, we apply Proposition \ref{vi1} for the 
map $p:X\to A$. We will use the notation  
\begin{equation}\label{eq3011}
p^\prime: X^\prime\to A^\prime
\end{equation}
for the resulting fiber space, and
we keep in mind that the projective manifold $A^\prime$ is birational to an abelian variety. 
\begin{equation*} 
\begin{CD}
    X^\prime @>{\pi_X}>> X \\
    @Vp^\prime VV      @VVpV  \\ 
    A^\prime @>>{\pi_A}>  A
\end{CD}
\end{equation*} 
The gain is that we can allow ourselves any effective 
``error divisor" in $X^\prime$ whose $p^\prime$-projection in $A^\prime$ has codimension at least two.

We will choose next a metric on the relative canonical bundle of $p^\prime$ which reflects the properties of the 
canonical algebra of a generic fiber of $p$. 
By hypothesis, we know that $\kappa(K_{X_a} +\Delta_a)\geq 0$ for any generic $a\in A$, where $\Delta_a := \Delta |_{X_a}$. 
The important result \cite{BCHM} states 
that the canonical ring of the pair $(X_a, \Delta_a)$ is finitely generated.
Let $m \gg 0 $ be a large enough positive integer, so that the singularities of the metric of 
$\displaystyle K_{X_a} +\Delta_a$ induced by the linear system $\displaystyle m K_{X_a}+ m \Delta_a$ are \emph{minimal} (i.e. 
equivalent to the singularities given by the generators of the canonical algebra). Of course, the same
integer $m$ will work for any $a\in A^\prime$ generic. 
As we have already recalled in section 2, we can construct the $m$-Bergman kernel metric $h_{X^\prime/A^\prime}$ on 
the bundle $K_{X^\prime/A^\prime}+\Delta'$ by using the fiberwise liner systems $\displaystyle m K_{X_a} +m \Delta_a$.

\noindent Our proof relies on the 
positivity and regularity properties of the direct image sheaf
\begin{equation}\label{equa0124} 
F_m^\prime:= p^\prime_\star\big(mK_{X^\prime/A^\prime} +m \Delta^\prime\big)
\end{equation}
of the relative pluricanonical bundle of $p^\prime$. 

We introduce the bundle 
\begin{equation}\label{equa0123}
L_m^\prime:= (m-1)K_{X^\prime/A^\prime} +m \Delta^\prime
\end{equation}
and we endow it with the corresponding power of the relative Bergman metric $(m-1) h_{X^\prime/A^\prime} +h_{\Delta^\prime}$, where $h_{\Delta^\prime}$ is the 
canonical singular metric on $\Delta^\prime$; then we have
\begin{equation}\label{equa0240}
 p^\prime_\star\big((K_{X^\prime/A^\prime}+ L_m^\prime)\otimes \cI(h_{X^\prime/A^\prime}^{\otimes(m-1)} \cdot h_{\Delta^\prime})\big) \subset F_m^\prime .
\end{equation}
It turns out that \eqref{equa0240} is generically isomorphic, as we verify next.

Let $a\in A^\prime$ be a generic point and 
let $u\in H^0 (X' _a , mK_{X^\prime/A^\prime} +m \Delta^\prime)$ be a holomorphic section. 
Thanks to the explicit construction of $h_{X' /A'}$, we infer that we have
\begin{equation} 
\log|u|^2\leq (m-1)\varphi_{X^\prime/A'}|_{X_a'}+ O (1)
\end{equation}
(by a slight abuse of notation). Combining this with the fact that $(X^\prime, \Delta^\prime)$ is klt, we obtain
$u\in H^0 (X' _a,(K_{X^\prime/A^\prime}+ L_m^\prime)\otimes \cI(h_{X^\prime/A^\prime}^{\otimes(m-1)} \cdot h_{\Delta^\prime})\big)$.
Therefore \eqref{equa0240} is generically isomorphic.
We refer to \cite[Cor A.2.4]{BP10a} for more details.
\smallskip

Thanks to Theorem \ref{ext}, 
the torsion free sheaf $F_m^\prime$ can be endowed with the 
\textit{canonical $L^2$ metric} $\displaystyle \widetilde{g}_{X^\prime/A^\prime}$ with 
respect to the metric on $L_m^\prime$ constructed above; it has
positive curvature. Moreover, we see that the Hermitian line bundle 
$(K_{X'/A'} + L_m ^\prime , h_{X^\prime/A^\prime}^{\otimes(m-1)} \cdot h_{\Delta^\prime})$ satisfies the hypothesis of Theorem \ref{ext}.
We can therefore apply Corollary \ref{det} to $(F_m ^\prime , \displaystyle \widetilde{g}_{X^\prime/A^\prime})$
and thus 
the determinant line bundle 
$\det F_m^\prime$
has a positive curvature current denoted by $\Xi\geq 0$ if
endowed with the metric induced by $\displaystyle \widetilde{g}_{X^\prime/A^\prime}$.

Set $F_m := p_\star (m K_{X/A}+ m \Delta)$. By using the same construction as above, we have a \textit{canonical $L^2$ metric} $\displaystyle \widetilde{g}_{X/A}$ on $F_m$.
Thanks to Remark \ref{natural}, we have an isometry
\begin{equation}\label{addisometry}
(F_m ^\prime , \widetilde{g}_{X^\prime / A^\prime}) |_{A^\prime \setminus Z^\prime} \rightarrow \pi_A ^\star ( F_m , \widetilde{g}_{X/A} ) |_{A\setminus Z} 
\end{equation}
where $\codim_A Z \geq 2$ and $Z' := \pi_A ^{-1} (Z)$.
\medskip

\noindent Theorems \ref{torus} and \ref{positivelemma} show clearly how we will 
proceed for the rest of our proof.
Roughly speaking, if $(\pi_A)_\star \Xi$ is neither ample nor trivial, then we are done by induction,
cf. Theorem \ref{torus}. If not, then we will analyze the remaining two extreme cases in the following subsections.

\subsection{The direct image of the curvature current of the determinant is non-zero}\label{case1} 
By the construction at the beginning of this section, we have a commutative 
diagram satisfying the properties in Proposition \ref{vi1} and Remark \ref{natural}:
\begin{equation*} 
\begin{CD}
    X^\prime @>{\pi_X}>> X \\
    @Vp^\prime VV      @VVpV  \\ 
    A^\prime @>>{\pi_A}>   A
\end{CD}
\end{equation*} 
If the  
curvature current corresponding to the determinant of $F_m^\prime$ is non-identically zero on $A^\prime \setminus Z^\prime$, i.e.
\begin{equation}\label{0126}
\pi_{A\star}\Xi\neq 0,
\end{equation}
then $\Xi$ is automatically a big class in $H^{1,1} (A^\prime,\bZ)$ (cf. Thm \ref{torus})
if $A$ is a simple torus\footnote{We can add this assumption by the argument in the beginning of Theorem \ref{mainourthem}.}. 
\medskip

\noindent As a consequence of Theorem \ref{positivelemma} we obtain the 
following result.

\begin{corollary}\label{impcor}
If the class $\{\Xi\}$ is big, then
for any generic point $a\in A^\prime$ and for
any $k\gg 1$ sufficiently divisible we can find an effective divisor $E$ in $X'$
satisfying $\codim_{A'} p' _\star (E) \geq 2$, such that the restriction map
$$H^0 (X^\prime, k K_{X^\prime/A^\prime} + k \Delta^\prime +E) \rightarrow H^0 (X^\prime _a , k K_{X^\prime/A^\prime} +k\Delta^\prime)$$
is surjective.
\end{corollary}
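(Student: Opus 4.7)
Plan:

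The plan is to combine Theorem \ref{positivelemma} with the bigness hypothesis on $\{\Xi\}$, and then invoke a Berndtsson--P\u{a}un-type extension theorem. First, I will apply Theorem \ref{positivelemma} to the fibration $p' : X' \to A'$ with $L = \Delta'$, viewed as a $\mathbb Q$-line bundle endowed with its canonical singular metric (whose curvature current equals $[\Delta'] \geq 0$, and which is $L^1$-integrable on $X'$ because $(X', \Delta')$ is klt, cf.\ Remark \ref{natural}). This will produce a constant $\varepsilon_0 > 0$ and an effective $\mathbb Q$-divisor $F$ on $X'$ with $\codim_{A'} p'_\star F \geq 2$ satisfying
\begin{equation*}
c_1(K_{X'/A'} + \Delta' + F) \geq \varepsilon_0 \, (p')^\star c_1(\det F_m') = \varepsilon_0 \, (p')^\star \{\Xi\}.
\end{equation*}

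Next, I will translate the bigness of $\{\Xi\}$ into positivity on $X'$ via Kodaira's lemma: there exist an ample $\mathbb Q$-line bundle $A_Y$ on $A'$ and a positive integer $N$ such that $N\det F_m' - A_Y$ is $\mathbb Q$-effective. Combined with the inequality above, for $k$ sufficiently divisible (concretely for $k\varepsilon_0 \geq N$) the $\mathbb Q$-class
\begin{equation*}
k(K_{X'/A'} + \Delta' + F) - (p')^\star A_Y
\end{equation*}
is pseudo-effective on $X'$, so carries a singular semipositive Hermitian metric.

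The extension theorem of Berndtsson--P\u{a}un used in the proof of Theorem \ref{positivelemma}, equation \eqref{surjadd}, will then give the surjectivity of the restriction map
\begin{equation*}
H^0\bigl(X', k(K_{X'/A'} + \Delta') + (p')^\star A_Y\bigr) \twoheadrightarrow H^0\bigl(X'_a, k(K_{X'/A'} + \Delta')\bigr)
\end{equation*}
for generic $a \in A'$ and $k$ sufficiently divisible, with the same ample $A_Y$ as above. The remaining task, which I expect to be the hard part, is to absorb the ample twist $(p')^\star A_Y$ into the effective divisor $kF$: using the pseudo-effectivity above, together with the fact that the direct image sheaves $p'_\star(k(K_{X'/A'}+\Delta'))$ and $p'_\star(k(K_{X'/A'}+\Delta')+kF)$ agree as reflexive sheaves on the complement of a codimension-two subset of $A'$ (since $\codim_{A'} p'_\star F \geq 2$), the extension with ample twist should be promoted, via an $L^2$/multiplier-ideal argument based on the singular semipositive metric on $k(K_{X'/A'}+\Delta'+F) - (p')^\star A_Y$, to an extension with the twist $kF$ in place of $(p')^\star A_Y$. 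Setting $E := kF$ (integral for $k$ clearing the denominators of $F$) then yields the asserted surjection with $\codim_{A'} p'_\star E \geq 2$; this final step of transferring the ample contribution into the codimension-two correction is the technical heart of the argument, requiring careful interaction between the pseudo-effectivity produced by Theorem \ref{positivelemma} and the bigness of $\{\Xi\}$ on the one hand, and the reflexivity of direct images together with $L^2$-extension techniques on the other.
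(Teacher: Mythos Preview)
Your plan has all the right ingredients, but the step you flag as ``the hard part'' --- absorbing the ample twist $(p')^\star A_Y$ into the effective divisor $kF$ via reflexivity and $L^2$ arguments --- is both vague and unnecessary. The paper's proof avoids this step entirely by a simple algebraic substitution that you are one move away from seeing.

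Here is the point you are missing. From Theorem~\ref{positivelemma} together with the bigness of $\{\Xi\}$ you obtain, for some $m_1\gg 0$ and some effective divisor $E$ (your $m_1 F$) with $\codim_{A'} p'_\star(E)\geq 2$, a \emph{linear equivalence}
\[
m_1(K_{X'/A'}+\Delta')+E \;=\; (p')^\star A_Y + \Delta_1,
\]
where $\Delta_1$ is the pseudo-effective remainder. Now apply the Berndtsson--P\u{a}un extension theorem not to the bundle $k(K_{X'/A'}+\Delta')+(p')^\star A_Y$, but to
\[
k(K_{X'/A'}+\Delta')+(p')^\star A_Y+\Delta_1.
\]
The extra pseudo-effective twist $\Delta_1$ is harmless: for $a$ generic and $k$ large the restriction of $\frac{1}{k}\Delta_1$ to $X'_a$ has trivial multiplier ideal, so the extension theorem still gives surjectivity of the restriction map. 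But now the total bundle is \emph{equal} to $(k+m_1)(K_{X'/A'}+\Delta')+E$, and on the generic fiber $X'_a$ the divisor $E$ vanishes (since $p'_\star(E)$ has codimension $\geq 2$). This yields directly
\[
H^0\bigl(X',(k+m_1)(K_{X'/A'}+\Delta')+E\bigr)\twoheadrightarrow H^0\bigl(X'_a,(k+m_1)(K_{X'/A'}+\Delta')\bigr),
\]
which is the desired surjection after renaming $k+m_1$ as $k$. No absorption argument, no reflexivity comparison, and the divisor $E$ is independent of $k$.
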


\begin{proof}
Thanks to Remark \ref{natural}, we can assume that the singular locus of $p'$ is normal crossing and we can thus apply Theorem \ref{positivelemma} to the fibration $p'$.
Let $A_Y$ be a very ample divisor on $A'$.
Since $\det p' _\star (m K_{X^\prime/A^\prime} +m \Delta^\prime) =\Xi$ is big on $A^\prime$, by Theorem \ref{positivelemma} 
we can find a parameter $m_1\in \bN$ large enough, and an effective divisor $E$ satisfying 
$\codim_{A'} p' _\star (E) \geq 2$, such that 
\begin{equation}\label{inequality}
m_1 K_{X^\prime/A^\prime} + m_1 \Delta^\prime + E= (p')^\star A_Y +\Delta_1.
\end{equation}
for some pseudo-effective divisor $\Delta_1$ on $X'$.

\noindent By \cite{BP10}, the restriction map
\begin{equation}\label{surj}
H^0 (X^\prime,  k K_{X^\prime/A^\prime} +k \Delta^\prime + (p')^\star A_Y +\Delta_1) \rightarrow H^0 (X^\prime _a , kK_{X^\prime/A^\prime} + k\Delta^\prime +
(p')^\star A_Y +\Delta_1)
\end{equation}
is surjective, for any $k\geq m_1$ divisible enough.
Indeed, this can be seen as follows: $\Delta_1$ is pseudo-effective, so its first Chern class 
contains a closed positive current  
$T$. If we choose $a\in A^\prime$ very general, and $m_1\gg 0$, then the restriction of the current
$\displaystyle \frac{1}{k}T$ to $X^\prime_y$ is well-defined, and the corresponding multiplier ideal sheaf $\mathcal{I} (\Delta^\prime + \frac{1}{k} T)$
is trivial for any $k\geq m_1$. We endow the $\bQ$-bundle 
$$K_{X^\prime/A^\prime}+ \Delta^\prime +\frac{1}{k}\Delta_1$$
with the $k$-Bergman metric, and then we write
$$k K_{X^\prime/A^\prime} +k \Delta^\prime + (p')^\star A_Y +\Delta_1= K_{X^\prime}+ \Delta^\prime+ \frac{1}{k}\Delta_1+ 
(k-1)(K_{X^\prime/A^\prime} + \Delta^\prime + \frac{1}{k}\Delta_1)+ (p')^\star A_Y$$
so that the surjectivity of the map \eqref{surj} follows by the usual extension results.
 
The relation \eqref{inequality} shows that we have 
$$k K_{X^\prime/A^\prime} + k \Delta^\prime + (p')^\star A_Y +\Delta_1= 
(m_1+ k)(K_{X^\prime/A^\prime}+\Delta^\prime) +E, $$ we therefore infer
$$H^0 (X^\prime, (m_1 +k)( K_{X^\prime/A^\prime} +\Delta^\prime )+E)\twoheadrightarrow  H^0 (X^\prime _a , (m_1 +k) (K_{X^\prime/A^\prime} +\Delta') +E)$$
and this last vector space is equal to 
$$H^0 (X^\prime _a , (m_1 +k) (K_{X^\prime/A^\prime} +\Delta^\prime)),$$
as a consequence fact that $p' _\star (E) \subsetneq A'$.
The corollary is proved.
\end{proof}


\medskip

\noindent In particular we have $\kappa (K_X +\Delta) \geq \kappa (K_{X_a} +\Delta_a)$ by Proposition \ref{vi1}.

\medskip

\subsection{The curvature current of the determinant is zero}\label{case2} 
In this subsection we assume that $p: X\rightarrow A$ is a fibration from a projective manifold
to an abelian variety, and we have
\begin{equation}\label{0125}
\pi_{A, \star}\Xi= 0 ,
\end{equation} 
where $\Xi$ is the determinant of $F_m ^\prime$ as in \eqref{0126}.

Set $F_m := p_\star (m K_{X/A}+m \Delta)$ and let $\wt g_{X/A}$ be the corresponding canonical $L^2$ metric on $F_m$ constructed in the beginning of Section 4.
By \eqref{addisometry}, it follows that the curvature of $\det (F_m)$ is equal to zero on $A\setminus \Sigma$, 
i.e. in the complement of a set of codimension at least two. However, it is known that (cf. for example 
\cite[Chapter III, Cor 2.11]{Dem}) the support of a closed 
positive $(1,1)$-current cannot be contained in 
such a small set, unless the said current is identically zero. We therefore have
\begin{equation}\label{01125}
\Theta\big(\det(F_m), \det \wt g_{X/A}\big)= 0, 
\end{equation} 
and it follows that there exists a subset of $A$ still denoted by $\Sigma$ such that the next properties hold true.
\begin{enumerate}

\item[(a)] The codimension of $\Sigma$ in $A$ is at least two, i.e. $\codim_A\Sigma\geq 2$.
\smallskip

\item[(b)] The restriction of the direct image sheaf $\displaystyle F_m|_{A\setminus \Sigma}$ 
is a vector bundle.

\item[(c)] The  
canonical $L^2$ metric $\wt g_{X/A}$ is non-singular on $A\setminus \Sigma$, and the couple
$\displaystyle \big(F_m|_{A\setminus \Sigma}, \wt g_{X/A}\big)$ is a Hermitian flat vector bundle.
\end{enumerate}
This is a consequence of Corollary \ref{det}, combined with \eqref{01125}.
In what follows, we will write 
\begin{equation}\label{eq50}
(\cE, h):= \big(F_m, \wt g_{X/A}\big)|_{A\setminus \Sigma}
\end{equation}
in order to simplify the notation. Thus $\cE\to A\setminus \Sigma$ is a vector bundle endowed with a 
non-singular metric $h$ whose associated curvature is equal to zero. Then the parallel transport induces a representation
\begin{equation}\label{paralelltran}
\rho: \pi_1(A \setminus \Sigma)\to \mathbb{U} (r) , 
\end{equation}
where $r$ is the rank of $\cE$ and where $\mathbb{U} (r)$ is the unitary group of degree $r$.
\medskip

\noindent The next proposition is a consequence of the fact that $\pi_1 (A)$ is commutative.



\begin{proposition}\label{notorsion}
Let $\rho: \pi_1(A)\to \mathbb{U} (r)$ be the representation {\rm \eqref{paralelltran}}. 
The following assertions hold true.

\begin{enumerate}

\item [(i)]
If the image of $\rho$ is infinite, then there exists a non-zero section of the bundle
$mK_X+ m\Delta+ L$, where $L$ is a non-torsion topologically trivial line bundle on $X$.
\smallskip

\item [(ii)] If the image of $\rho$ is finite, we have $\kappa (K_X +\Delta) \geq \kappa (K_F +\Delta_F)$.
\end{enumerate}
\end{proposition}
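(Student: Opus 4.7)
The plan is to exploit that $\pi_1(A \setminus \Sigma) \cong \pi_1(A) \cong \bZ^{2\dim A}$ is finitely generated free abelian (using $\codim_A \Sigma \geq 2$), while $\rho$ takes values in the unitary group. Since commuting unitary matrices are simultaneously diagonalizable, $\rho$ will split as a direct sum $\rho \cong \bigoplus_{i=1}^r \chi_i$ of unitary characters, and correspondingly $(\cE, h)$ decomposes on $A \setminus \Sigma$ as an orthogonal direct sum $\cE = \bigoplus_{i=1}^r L_i$ of Hermitian flat line bundles. Each $L_i$ is topologically trivial, and I will extend it uniquely across the codimension $\geq 2$ set $\Sigma$ to a line bundle $\bar L_i \in \Pic^0(A)$.

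For part~(i), if the image of $\rho$ is infinite then some $\chi_{i_0}$ has infinite image, making $\bar L_{i_0} \in \Pic^0(A)$ non-torsion. The embedding $L_{i_0} \hookrightarrow \cE$ on $A \setminus \Sigma$ corresponds, via $\cE = p_\star(m K_{X/A} + m\Delta)|_{A\setminus \Sigma}$ and the projection formula, to a nonzero section of $m K_{X/A} + m\Delta - p^\star \bar L_{i_0}$ on $X \setminus p^{-1}(\Sigma)$ whose fiberwise $L^2$ norm is locally constant. The $L^2$ extension argument outlined in the introduction (bounded fiber norm plus $\codim_A \Sigma \geq 2$) then extends this section to all of $X$. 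Since $K_A = 0$ gives $K_{X/A} = K_X$, setting $L := -p^\star \bar L_{i_0}$ exhibits the required nonzero section of $mK_X + m\Delta + L$; the bundle $L$ is topologically trivial and non-torsion on $X$ because $p^\star : \Pic^0(A) \to \Pic^0(X)$ is injective ($p$ being surjective with connected fibers).

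For part~(ii), if $\rho$ has finite image then its kernel is a finite-index subgroup of $\pi_1(A)$, defining a finite \'etale cover $\pi_A : \tilde A \to A$ by an abelian variety. Base-change yields finite \'etale $\pi_X : \tilde X := X \times_A \tilde A \to X$ with $\tilde \Delta := \pi_X^\star \Delta$ klt and $K_{\tilde X} + \tilde \Delta = \pi_X^\star(K_X + \Delta)$. By Remark~\ref{natural}, the direct image $\tilde F_m := \tilde p_\star(m K_{\tilde X/\tilde A} + m \tilde \Delta)$ agrees with $\pi_A^\star F_m$ outside a codimension $\geq 2$ set, and by construction it is trivial as a Hermitian flat bundle there. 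Hence every section $s \in H^0(\tilde X_{\tilde a}, m(K_{\tilde X_{\tilde a}} + \tilde \Delta_{\tilde a}))$ extends as a constant (parallel) section to $\tilde A \setminus \pi_A^{-1}(\Sigma)$, and the same $L^2$ extension argument as in~(i) produces a global $\tilde s \in H^0(\tilde X, m(K_{\tilde X} + \tilde \Delta))$ restricting to $s$ on $\tilde X_{\tilde a}$. Taking $k$-fold products yields
$$h^0\bigl(\tilde X, km(K_{\tilde X}+\tilde\Delta)\bigr) \;\geq\; \dim \Sym^k H^0\bigl(F, m(K_F+\Delta_F)\bigr),$$
so $\kappa(K_{\tilde X}+\tilde\Delta) \geq \kappa(K_F + \Delta_F)$ (using that $m$ was chosen large enough for $|m(K_F + \Delta_F)|$ to realize the Iitaka fibration of $K_F + \Delta_F$); \'etale invariance of the Kodaira dimension of klt pairs (via Galois norms of sections) then transfers this back to $\kappa(K_X + \Delta) = \kappa(K_{\tilde X} + \tilde \Delta) \geq \kappa(K_F + \Delta_F)$.

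The main obstacle in both parts will be the genuine global extension across $\Sigma$ (resp.\ $\pi_A^{-1}(\Sigma)$): although these sets have codimension $\geq 2$ in the \emph{base}, their preimages in $X$ or $\tilde X$ typically contain divisors, so one cannot invoke ordinary Hartogs extension on the total space. The resolution rests on the $L^2$ metric structure provided by Theorem~\ref{ext}: flatness of the induced metric makes the fiberwise $L^2$ norm locally constant and therefore globally bounded, and the $L^2$ extension theorem then delivers honest pluricanonical sections on all of $X$, respectively $\tilde X$.
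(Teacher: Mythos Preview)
Your argument is correct in substance and, for part~(i), essentially identical to the paper's: diagonalise the abelian unitary representation, pick a character with infinite image, and extend the corresponding norm-one section across $p^{-1}(\Sigma)$ by the $L^2$ mechanism (bounded Bergman weights, Theorem~\ref{rel2}). Your remark that $p^\star:\Pic^0(A)\to\Pic^0(X)$ is injective is a useful precision that the paper leaves implicit.

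For part~(ii) you take a mildly different route. The paper stays on $X$: with $l:=|\rho(\pi_1(A))|$ it argues that the $l$-th power of a fibre section becomes monodromy-invariant and hence extends. You instead pass to the \'etale cover $\tilde A\to A$ given by $\ker\rho$, where $\pi_A^\star\cE$ is trivially flat, extend every fibre section directly, and descend the Kodaira-dimension inequality by \'etale invariance. Both routes encode the same idea (kill the finite monodromy), but your version is arguably cleaner: the paper's phrase ``parallel transport of $\tau^{\otimes l}$'' is only literally correct for eigenvectors $\tau$, whereas your \'etale-cover formulation avoids this ambiguity altogether.

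There is one genuine slip. The displayed inequality
\[
h^0\bigl(\tilde X, km(K_{\tilde X}+\tilde\Delta)\bigr)\;\geq\;\dim \Sym^k H^0\bigl(F, m(K_F+\Delta_F)\bigr)
\]
is false as written: products $\tilde s_{i_1}\cdots\tilde s_{i_k}$ are linearly independent on $\tilde X$ only when their restrictions $s_{i_1}\cdots s_{i_k}$ are independent on $F$, so what you actually obtain is
\[
h^0\bigl(\tilde X, km(K_{\tilde X}+\tilde\Delta)\bigr)\;\geq\;\dim\Bigl(\text{image of }\Sym^k H^0(F,m(K_F+\Delta_F))\to H^0(F,km(K_F+\Delta_F))\Bigr).
\]
Your parenthetical already contains the fix: since $m$ was chosen so that $|m(K_F+\Delta_F)|$ realises the Iitaka fibration (equivalently, by \cite{BCHM}, so that the multiplication maps are surjective), the right-hand side grows like $k^{\kappa(K_F+\Delta_F)}$, which is exactly what you need. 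Replace the inequality accordingly and the argument goes through.
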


\begin{proof}
Since $\pi_1 (A) =\pi_1 (A \setminus \Sigma)$ is abelian, $\rho$ can be decomposed as the direct sum of $r$ representations $\displaystyle \{ \rho_i\}_{i=1}^r$
$$\rho_i : \pi_1 (A \setminus \Sigma) \rightarrow \mathbb{U} (1) .$$

\noindent {\em (i): } If the image of $\rho$ is infinite, there exists at least one index, say $i=1$, such that the image of the corresponding representation $\rho_1$ is infinite. Then $\rho_1$ corresponds to a
topologically trivial \emph{non-torsion} line bundle $L_1$. 
We therefore infer the existence of a section 
\begin{equation}
s\in H^0 \big(p^{-1} (A\setminus \Sigma), m K_X +m\Delta - L_1\big)
\end{equation}
such that $\displaystyle |s|_{\wt g_{X/A}} (y) =1$ for every generic point $y\in A$. 

It follows that
the $L^2$-norm of $s$ with respect to any smooth metric on the bundle $\displaystyle m K_X +m\Delta - L_1$ is finite.
Indeed, this is a consequence of the fact that the weights of the metric $h^{(m)}_{X/Y}$ are bounded from above,
cf. Theorem \ref{rel2}.  
Then $s$ can be extended as an element in $H^0 (X, m K_X +m\Delta - L_1)$ and the claim (i) of our proposition is established.
\smallskip

\noindent {\em (ii): } If the image of $\rho$ is finite, set $l :=|\rho (\pi_1 (A))|$. For every $\tau\in H^0 (F, m K_F +m\Delta)$, the paralel transport of $\tau^{\otimes l}$
induces an element in 
$$s\in H^0 (p^{-1} (A\setminus \Sigma), ml K_X +ml\Delta) ,$$ 
We invoke the same argument as above (namely the finiteness of the $L^2$ norm of $s$) to show that 
it extends to $X$. It defines therefore a non-zero element in $H^0 (X, ml K_X +ml\Delta)$, and the proposition is proved.
\end{proof}


\bigskip

\medskip


\noindent The following result due to Campana-Peternell \cite[Thm 3.1]{CP11} together with its generalization 
in \cite{CKP12} will allow us to conclude.

\begin{theorem}\label{cape}\cite[Thm 3.1]{CP11}\cite[Thm 0.1]{CKP12}
Let $X$ be a projective complex manifold, and 
let $\Delta$ be an effective $\bQ$-divisor on $X$, such that the pair $(X, \Delta)$ is log canonical. 
Let $L\in \Pic^0(X)$ be a topologically trivial line bundle. Then:
\begin{enumerate}

\item[(a)] For any positive integer $m\geq 1$ we have $\kappa(K_X +\Delta)\geq \kappa(mK_X+ m\Delta+ L)$.
\smallskip
\item[(b)] Let $l \in \mathbb{N}$ be a positive integer. 
Then there exists infinitely many $d\in\mathbb{N}$ for which we have
$$h^0 (X, l(K_X +\Delta) +d L) \geq h^0 (X, l(K_X +\Delta) +L).$$
\smallskip
\item[(c)] If $\kappa(K_X +\Delta)= \kappa(m K_X+ m\Delta + L)=0$, then $L$ is a torsion bundle.
\end{enumerate}
\end{theorem}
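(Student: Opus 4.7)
The plan is to reduce the three assertions to the analysis of a single coherent sheaf on the Albanese variety. Let $\alpha\colon X \to A := \Alb(X)$ be the Albanese morphism. A topologically trivial line bundle $L$ on $X$ is, up to a torsion element, the pull-back of some $L_A \in \Pic^0(A) = \widehat{A}$, and by the projection formula
\begin{equation*}
 h^0\bigl(X,\, l(K_X+\Delta) + dL\bigr) \;=\; h^0\bigl(A,\, \mathcal{F}_l \otimes dL_A\bigr),
\end{equation*}
where $\mathcal{F}_l := \alpha_\star\bigl(l(K_X+\Delta)\bigr)$. Thus the cohomological content of (a)--(c) is governed by how $\mathcal{F}_l$ interacts with points of $\widehat{A}$.

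The central ingredient I would invoke is the \emph{Chen--Jiang decomposition} of $\mathcal{F}_l$: after possibly a birational modification, $\mathcal{F}_l$ splits as
\begin{equation*}
\mathcal{F}_l \;\simeq\; \bigoplus_i \bigl(\phi_i^\star \mathcal{G}_i\bigr) \otimes Q_i,
\end{equation*}
where each $\phi_i\colon A \to B_i$ is a surjective homomorphism onto an abelian quotient, each $\mathcal{G}_i$ is M-regular on $B_i$, and $Q_i$ is a \emph{torsion} element of $\Pic^0(A)$. Together with the fact that M-regular sheaves have cohomological support loci equal to the whole Picard variety, this decomposition makes the behavior of $h^0(A, \mathcal{F}_l \otimes dL_A)$ in $d$ transparent.

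Statement (b) then follows because each summand contributes a lower-semicontinuous count $h^0\bigl(B_i, \mathcal{G}_i \otimes (dL_A\otimes Q_i^{-1})|_{B_i}\bigr)$, and M-regularity ensures that the maximal value is attained on a Zariski-open subset of $\widehat{A}$; since $\{dL_A : d\in\bN\}$ meets any nonempty open subset of $\widehat A$ in infinitely many points, the required inequality follows. For (c), the vanishing of both Kodaira dimensions forces only the summands with $\phi_i$ the identity to contribute non-trivially to $h^0$, and on such a summand $h^0(\mathcal{G}_i \otimes dL_A\otimes Q_i^{-1})$ can remain non-zero for varying $d$ only if $L_A$ itself is torsion. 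For (a), one combines a nonzero $s \in H^0(X, mK_X+m\Delta+L)$ with the sections produced by (b) inside $H^0\bigl(X, kl(K_X+\Delta) + dL\bigr)$: choosing $d$ in a suitable residue class modulo the order of the relevant $Q_i$ and tensoring with the appropriate power of $s$ cancels the $L$-twist and produces enough independent sections of multiples of $K_X+\Delta$ to match $\kappa(mK_X+m\Delta+L)$.

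The main obstacle is establishing the Chen--Jiang decomposition itself: this rests on the full generic vanishing machinery of Hacon, Pareschi--Popa and Schnell, together with the semipositivity (and, in the right setting, Hermitian-flatness) of $\alpha_\star\bigl(m(K_X+\Delta)\bigr)$ with respect to its Narasimhan--Simha $L^2$-metric in the spirit of Theorem \ref{ext}. Once this decomposition is in place, the rest of the argument is formal manipulation of M-regular sheaves on abelian varieties, and (a)--(c) drop out along the lines indicated above.
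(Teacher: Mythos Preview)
Your route through the Chen--Jiang decomposition is heavier than what the paper actually does. The paper's only nontrivial input is that each locus
\[
V_{q,l} \;=\; \{\lambda \in \Pic^0(X) : h^0(X, l(K_X+\Delta) + \lambda) \geq q\}
\]
is a finite union of torsion translates of subtori of $\Pic^0(X)$ (quoted from \cite[Section~2]{CKP12}, ultimately resting on Simpson's theorem). Part (a) is taken directly from \cite{CKP12}. For (b): if $L\in V_{q,l}$ lies in a component $\rho+T$ with $\rho$ torsion of order $s$, then $dL\in \rho+T\subset V_{q,l}$ for every $d\equiv 1\pmod s$. For (c): if $L$ were non-torsion, the component $L_{\tor}+T'$ through $L$ would have $\dim T'\geq 1$; picking $0\neq F\in T'$ and sections $s_t\in H^0(X,m(K_X+\Delta)+L_{\tor}+tF)$ for small $t$, the products $s_t\cdot s_{-t}$ and $s_0^2$ are independent in $H^0(X,2m(K_X+\Delta)+2L_{\tor})$, forcing $\kappa(K_X+\Delta)\geq 1$. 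No Albanese map, no M-regularity, no decomposition of direct images.

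Your argument for (b) has two concrete gaps. First, ``M-regularity ensures that the maximal value is attained on a Zariski-open subset'' is backwards: $h^0$ is \emph{upper} semicontinuous, so its \emph{minimum} is attained on the open stratum and it jumps \emph{up} on closed loci; if $L$ sits at a jump, a generic $dL$ gives \emph{fewer} sections, not more. Second, ``$\{dL_A:d\in\bN\}$ meets any nonempty open subset of $\widehat A$'' is false: the Zariski closure of this cyclic set is the smallest subtorus containing $L_A$, which may well be proper. To repair (b) along your lines you would have to track, summand by summand, when $Q_i\otimes dL_A$ descends through $\phi_i$, and this reduces exactly to the torsion-translate bookkeeping above. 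Finally, the statement is for \emph{log canonical} pairs; the Chen--Jiang decomposition in that generality is delicate and postdates \cite{CKP12}, so invoking it here is both anachronistic and far stronger than necessary.
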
 

\noindent In \cite{CKP12} the point $(b)$ is not explicitly stated, so  
we will provide here a complete treatment for the convenience of the readers. 
The argument we invoke in what follows is borrowed from \cite[Prop 3.2, (4)]{CP11}.

\begin{proof}
We remark that the point $(a)$ is a direct consequence of \cite[Thm 0.1]{CKP12}. 
In order to establish $(b)$, we first observe that for each $q$ and $l$, the set 
$$V_{q,l} = \{\lambda \in \Pic^0 (X) :  h^0 (X, l(K_X +\Delta) +\lambda) \geq q\}$$
\emph{is a finite union of torsion translates of complex subtori of} $\Pic^0 (X)$, 
cf. \cite[Section 2]{CKP12}.  In particular, if $L\in V_{q,l} $, then there exists a 
torsion line bundle $\rho$ and a subtorus $T\subset \Pic^0 (X)$ such that $L$ belongs to the component $\rho+ T$ of $V_{q,l}$. If we denote by $s$ the order of $\rho$, than  we see that   
$d\cdot L\in V_{q,l}$ provided that $d:= p\cdot s+ 1$, where $p$ is any positive integer.
The point $(b)$ is thus proved.
\smallskip

To prove the point $(c)$, we suppose by contradiction that there exists a non-torsion bundle $L\in \Pic^0 (X)$ such that $\kappa(K_X +\Delta)= \kappa(m K_X+ m\Delta + L)=0$.
After passing to some multiple of $m$ and $L$, we can assume for simplicity that  
$$h^0 (X, m K_X + m \Delta +L) = 1 .$$ 
As $V_{1, m}$ is a finite union of torsion translates of complex subtori of $\Pic^0 (X)$,
we can find a torsion bundle $L_{\tor}\in \Pic^0 (X)$ and a non-trivial bundle $F\in T'$,
(where $T'$ is a subtorus of $\Pic^0 (X)$), such that $L= L_{\tor} +F$ and $L_{\tor}+T' \subset V_{1,m}$.
As a consequence, for every $t\in\mathbb{R}$, we can find three non-trivial sections 
\begin{equation}\label{threesect}
s_t \in H^0 (X, m K_X + m \Delta + L_{\tor} +t F),\quad
s_{-t} \in H^0 (X, m K_X + m \Delta + L_{\tor} -tF)
\end{equation} 
as well as $s_0 \in  H^0 (X, m K_X + m \Delta + L_{\tor})$.
When $|t|$ is small enough, $s_t \cdot s_{-t}$ and $s_0 \cdot s_0$ are two linearly independent elements in $H^0 (X, 2m K_X + 2m \Delta + 2 L_{\tor})$.
Then $\kappa(K_X +\Delta) \geq 1$ and we get a contradiction.
\end{proof}

\medskip


\noindent We are now ready to prove our main theorem.

\begin{theorem}\label{mainourthem}
Let $p: X\rightarrow A$ be a fibration from a projective manifold to an Abelian variety.
Let $\Delta$ be an effective klt $\bQ$-divisor on $X$ and let $F$ be a generic fiber of $p$.
Then
\begin{equation}\label{eqfin}
\kappa (K_X +\Delta) \geq \kappa (K_F +\Delta_F) , 
\end{equation}
where $\Delta_F = \Delta|_F$.
\end{theorem}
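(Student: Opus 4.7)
The plan is induction on $n = \dim A$. The base case $n = 0$ is trivial. For the inductive step, I first reduce to $A$ simple: if $A$ admits a proper non-trivial sub-abelian variety $B$, the quotient $q: A \to A/B$ has dimension strictly less than $n$, and two successive applications of the inductive hypothesis (first to the composed fibration $X \to A/B$, then to its generic fibre $X_s$, which itself fibres over $B$) give the chain $\kappa(K_X + \Delta) \geq \kappa(K_{X_s} + \Delta_{X_s}) \geq \kappa(K_F + \Delta_F)$. Hence we may assume $A$ is simple; by Proposition \ref{simplecmn} it then suffices to produce $\kappa(K_X + \Delta) \geq 1$ whenever $\kappa(K_F + \Delta_F) \geq 1$ (the case $\kappa(K_F + \Delta_F) \leq 0$ is either trivial or handled incidentally below).

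\medskip

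Next I would pass to the birational model $p' : X' \to A'$ of Proposition \ref{vi1} and Remark \ref{natural}. For $m$ sufficiently divisible, endow the torsion-free sheaf $F_m' = p'_\star(mK_{X'/A'} + m\Delta')$ with the canonical $L^2$-metric $\wt g_{X'/A'}$ from Theorem \ref{ext}, and extract from Corollary \ref{det} the positive curvature current $\Xi$ on $\det F_m'$. Its direct image $\pi_{A,\star}\Xi$ represents a pseudo-effective class on $A$; since $A$ is simple, Theorem \ref{torus} forces a dichotomy: either this class is big on $A$, or it vanishes identically.

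\medskip

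In the big case, Corollary \ref{impcor} supplies, for generic $a \in A'$ and every sufficiently divisible $k$, an effective divisor $E \subset X'$ with $\codim_{A'} p'_\star E \geq 2$ and a surjection
\[
H^0\big(X', k(K_{X'/A'} + \Delta') + E\big) \twoheadrightarrow H^0\big(X'_a, k(K_{X'/A'} + \Delta')\big).
\]
Since Proposition \ref{vi1} ensures that any divisor projecting in codimension $\geq 2$ to $A'$ is $\pi_X$-contractible, pushing forward to $X$ and combining with Remark \ref{natural} gives $\kappa(K_X + \Delta) \geq \kappa(K_F + \Delta_F)$. In the zero case, Corollary \ref{det}(b) makes $(F_m, \wt g_{X/A})$ Hermitian flat on $A \setminus \Sigma$ with $\codim_A \Sigma \geq 2$, producing a unitary representation $\rho : \pi_1(A) \to \mathbb{U}(r)$. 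Proposition \ref{notorsion}(ii) handles the finite-image case directly. In the infinite-image case, Proposition \ref{notorsion}(i) produces a non-torsion $L \in \Pic^0(X)$ with $H^0(X, mK_X + m\Delta + L) \neq 0$; Theorem \ref{cape}(c) then precludes $\kappa(K_X + \Delta) = \kappa(mK_X + m\Delta + L) = 0$, and combined with Theorem \ref{cape}(a) this forces $\kappa(K_X + \Delta) \geq 1$, which Proposition \ref{simplecmn} upgrades to the full inequality.

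\medskip

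The subtlest step is the infinite-image subcase of the zero case: one must realise the flat Hermitian bundle on $A \setminus \Sigma$ via parallel transport, verify that the resulting pluricanonical section twisted by a non-torsion $L$ has finite $L^2$-norm so as to extend across the codimension-two locus $\Sigma$, and finally convert its existence into a lower bound on $\kappa(K_X + \Delta)$ itself via the Campana-Peternell dichotomy of Theorem \ref{cape}. It is precisely this final conversion, together with the prior reduction to simple $A$ so that Proposition \ref{simplecmn} applies, that replaces the classification-based argument used by Kawamata in the elliptic-curve case.
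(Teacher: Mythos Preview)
Your proposal is correct and follows essentially the same approach as the paper's own proof. The only cosmetic differences are that you reduce to simple $A$ via the quotient $A \to A/B$ rather than via Poincar\'e reducibility, and that in the infinite-image subcase you invoke parts (a) and (c) of Theorem~\ref{cape} whereas the paper cites part (b); neither alters the substance of the argument.
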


\begin{proof}
Without loss of generality, we can assume that $A$ is a simple torus, i.e., there is no nontrivial subtorus in $A$. In fact, since $A$ is Abelian, if there is a nontrivial 
subtorus $A_1$,
by Poincare's reductibility theorem (cf. \cite[Thm 8.1]{Deb}), after a finite smooth cover, $A = A_1 \times A_2$, where $A_2$ is another subtorus of $A$.
Then \eqref{eqfin} can be proved by induction. 

We follow the notations in the beginning of Section 4. In particular, we have the commutative diagram
\begin{equation*} 
\begin{CD}
    X^\prime @>{\pi_X}>> X \\
    @Vp^\prime VV      @VVpV  \\ 
    A^\prime @>>{\pi_A}>   A
\end{CD}
\end{equation*} 
and the positive curvature current $\Xi$ on $A^\prime$.

If $(\pi_A)_\star (\Xi) \neq 0$, we can use the results in the case 4.1.
In particular, as $A$ is simple, Theorem \ref{torus} implies that $\Xi$ is big on $A'$.
By using Corollary \ref{impcor} and Proposition \ref{vi1}, we obtain $\kappa (K_X +\Delta) \geq \kappa (K_F +\Delta_F)$.

If $(\pi_A)_\star (\Xi) =0$, we are in the case 4.2. If the image of the representation $\rho$ in Proposition \ref{notorsion} is finite, 
then Proposition \ref{notorsion} implies \eqref{eqfin}.
If not, using Proposition \ref{notorsion} again, there exists $L\in \Pic^0 (A)$ which is not a torsion point, 
such that 
\begin{equation}\label{equa0203}
\kappa (m K_{X/A} + m \Delta+ p^\star L) \geq 0.
\end{equation} 
Combined this with Theorem \ref{cape}, we have 
\begin{equation}\label{equa0202}
\kappa(K_X +\Delta)\geq 1.
\end{equation} 
Indeed, we necessarily have $\kappa(K_X +\Delta)\geq 0$, and we cannot have equality as it would contradict \eqref{equa0203}
and the point (b) of Theorem \ref{cape}.
Thanks to \eqref{equa0202} and Proposition \ref{simplecmn}, we obtain \eqref{eqfin} and the theorem is proved.
\end{proof}


\section{Further results and remarks }

\noindent Throughout the current section, the notations we will be 
using are the same as in section 4.  The following result is a direct consequence of the arguments used in paragraph 4.1, so we simply state it without any comment about the proof.

\begin{theorem}\label{conseqbig} Let $p:X\to Y$ be an algebraic fiber space, and let $\Delta$ be an effective $\bQ$-divisor on $X$ such that $(X, \Delta)$ is klt. We assume that for some positive $m$ divisible enough we have 
$\det F_m$ is big. Then we infer that
\begin{equation}\label{yesman1}
\kappa (K_X +\Delta) \geq \kappa (K_F +\Delta_F)+ \kappa(Y), 
\end{equation}
where $\Delta_F = \Delta|_F$.
\end{theorem}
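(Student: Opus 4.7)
The plan is to adapt the ``big determinant'' argument of Section~\ref{case1}, while also picking up the $\kappa(Y)$ contribution by tensoring extended fiber sections with pullbacks of pluricanonical forms from $Y$.

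First, apply Proposition~\ref{vi1} and Remark~\ref{natural} to reduce to a birational model $p^\prime: X^\prime\to Y^\prime$ for which $(X^\prime,\Delta^\prime)$ is klt and $\pi_X^\star(K_X+\Delta)+E^\prime=K_{X^\prime}+\Delta^\prime$ for some effective $\pi_X$-exceptional $E^\prime$; bigness of $\det F_m$ is preserved on passing to $F_m^\prime:=p^\prime_\star(mK_{X^\prime/Y^\prime}+m\Delta^\prime)$. Next apply Theorem~\ref{positivelemma} with $L=\Delta^\prime$; after possibly rescaling $m$, one can choose a very ample $A_{Y^\prime}$ dominated by $\det F_m^\prime$ and such that $A_{Y^\prime}-K_{Y^\prime}$ is ample as well. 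This yields $m_1\in\bN$, an effective $F$ on $X^\prime$ with $\codim_{Y^\prime}p^\prime_\star(F)\geq 2$, and a pseudo-effective $\Delta_1$ satisfying
\begin{equation*}
m_1(K_{X^\prime/Y^\prime}+\Delta^\prime)+F \sim_{\bQ} p^{\prime\star}A_{Y^\prime}+\Delta_1.
\end{equation*}
Exactly as in the proof of Corollary~\ref{impcor}, the Berndtsson--P\u aun extension theorem then gives, for $N=m_1+k$ with $k$ divisible enough and generic $y\in Y^\prime$, a surjection
\begin{equation*}
H^0\bigl(X^\prime,N(K_{X^\prime/Y^\prime}+\Delta^\prime)+F\bigr) \twoheadrightarrow H^0\bigl(X_y^\prime,N(K_{F^\prime}+\Delta_{F^\prime}^\prime)\bigr),
\end{equation*}
where one uses $F|_{X_y^\prime}=0$ for $y$ outside the codimension $\geq 2$ locus $p^\prime(F)$.

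The new step is to combine the lifted fiber sections with pluricanonical forms on $Y$. Pick bases $\{s_i\}$ of $H^0(X_y^\prime,N(K_{F^\prime}+\Delta_{F^\prime}^\prime))$ and $\{\tau_j\}$ of $H^0(Y^\prime,NK_{Y^\prime})\cong H^0(Y,NK_Y)$, lift each $s_i$ to $\widetilde s_i$ via the surjection above, and consider the products
\begin{equation*}
\widetilde s_i\otimes p^{\prime\star}\tau_j \;\in\; H^0\bigl(X^\prime,N(K_{X^\prime}+\Delta^\prime)+F\bigr).
\end{equation*}
Linear independence is immediate by restriction to a very general fiber disjoint from $p^\prime(F)$: a relation $\sum a_{ij}\,\widetilde s_i\otimes p^{\prime\star}\tau_j=0$ restricts to $\sum a_{ij}\,s_i\,\tau_j(y)=0$ on $X_y^\prime$, which forces $a_{ij}=0$ by independence of the $s_i$ on $X_y^\prime$ and of the $\tau_j$ on $Y$. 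To descend to $X$, observe that Proposition~\ref{vi1} makes $F$ $\pi_X$-contractible with $\pi_X(F)$ of codimension $\geq 2$, so the $\pi_X$-exceptional divisor $NE^\prime+F$ satisfies $(\pi_X)_\star\cO_{X^\prime}(NE^\prime+F)=\cO_X$; combining this with $N(K_{X^\prime}+\Delta^\prime)=N\pi_X^\star(K_X+\Delta)+NE^\prime$ and the projection formula gives
\begin{equation*}
(\pi_X)_\star\bigl(N(K_{X^\prime}+\Delta^\prime)+F\bigr)\;=\;N(K_X+\Delta).
\end{equation*}
Therefore the linearly independent sections above descend to linearly independent sections of $N(K_X+\Delta)$, yielding $h^0(X,N(K_X+\Delta))\gtrsim N^{\kappa(K_F+\Delta_F)+\kappa(Y)}$ for $N$ large and divisible, which is precisely \eqref{yesman1}.

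The main obstacle is the descent step: one must ensure that the auxiliary divisor $F$ introduced by Theorem~\ref{positivelemma} does not inflate the asymptotic count of pluricanonical sections on $X$. This is precisely where the contractibility statement of Proposition~\ref{vi1} is crucial, since it guarantees $(\pi_X)_\star\cO(F)=\cO_X$ and hence a clean pushforward via the projection formula; without it the extra $F$ would contribute spurious sections and the bound for $h^0(X,N(K_X+\Delta))$ would collapse.
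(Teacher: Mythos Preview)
Your proof is correct and is essentially the argument the paper intends: it reproduces the proof of Corollary~\ref{impcor} verbatim (passing to the Viehweg model, invoking Theorem~\ref{positivelemma}, and applying the extension theorem of \cite{BP10}), and then adds the one extra step needed over a general base, namely tensoring the extended relative sections with $p^{\prime\star}\tau_j$ for $\tau_j\in H^0(Y^\prime,NK_{Y^\prime})$ to manufacture the $\kappa(Y)$ factor. This tensoring trick is exactly the device the paper itself uses later in the proof of Theorem~\ref{conseqflat} (Case~3), so your write-up matches both the letter and the spirit of the reference to paragraph~\ref{case1}.

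Two small remarks on presentation. First, your linear-independence argument needs more than ``a very general fiber'': restricting to a single $X^\prime_y$ only gives $\sum_j a_{ij}\tau_j(y)=0$ for each $i$, and you must then let $y$ vary over the Zariski-open locus where the $\widetilde s_i|_{X^\prime_y}$ remain independent to conclude $\sum_j a_{ij}\tau_j\equiv 0$ on $Y^\prime$. You clearly have this in mind (you invoke independence of the $\tau_j$ ``on $Y$''), but it is worth saying explicitly. Second, the side condition ``$A_{Y^\prime}-K_{Y^\prime}$ ample'' is never used and can be dropped.
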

\medskip

\noindent Developing further the ideas in the proof of our main result here, we obtain the following
statement in which the flatness of $\det F_m$ is shown to have stronger consequences. This result was suggested to us by Christian Schnell.

\begin{theorem}\label{flat} Let $p:X\to Y$ be a fibration between two projective manifolds and let $\Delta$ be an effective $\mathbb{Q}$-divisor, such that $(X, \Delta)$ is klt.
If $\displaystyle \det F_m$ is topologically trivial,
then $\displaystyle (F_m, \widetilde{g}_{X/Y})$ is a hermitian flat vector bundle on $Y$.
\end{theorem}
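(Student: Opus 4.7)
The plan is to adapt the argument of \S 4.2 to a general base, following essentially the same three ingredients: vanishing of the curvature of the determinant, promotion of this vanishing to flatness of $F_m$ on the locally free locus via Corollary \ref{det}(b), and globalization of the flat structure. Throughout, let $\Xi \ge 0$ denote the curvature current of $(\det F_m, \det \widetilde{g}_{X/Y})$ furnished by Corollary \ref{det}, and let $\Sigma \subset Y$ be a closed analytic subset with $\codim_Y \Sigma \ge 2$ off which the torsion free sheaf $F_m$ is locally free.

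First I would verify that $\Xi$ vanishes identically. By hypothesis $c_1(\det F_m) = 0$ in $H^2(Y,\bZ)$, and $\Xi$ is a closed positive $(1,1)$-current in that class; on the compact K\"ahler manifold $Y$, pairing with $\omega^{\dim Y - 1}$ for any K\"ahler form $\omega$ gives $\int_Y \Xi \wedge \omega^{\dim Y -1} = 0$, and positivity of $\Xi \wedge \omega^{\dim Y -1}$ forces $\Xi \equiv 0$.

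Next I would apply Corollary \ref{det}(b) with $\Omega' = Y \setminus \Sigma$: the full curvature tensor of $(F_m, \widetilde{g}_{X/Y})|_{Y \setminus \Sigma}$ vanishes and the metric is smooth, so on this open set we have a genuine Hermitian flat vector bundle. Parallel transport yields a unitary representation $\rho : \pi_1(Y \setminus \Sigma) \to \bU(r)$, and because $\codim_Y \Sigma \ge 2$ the inclusion $Y \setminus \Sigma \hookrightarrow Y$ induces an isomorphism on $\pi_1$; hence $\rho$ descends to a representation of $\pi_1(Y)$ and defines a Hermitian flat bundle $V_\rho \to Y$ that restricts isometrically to $(F_m, \widetilde{g}_{X/Y})$ on $Y \setminus \Sigma$. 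Uniqueness of reflexive extensions across codimension-two sets then identifies $V_\rho$ with $F_m^{\star\star}$ as reflexive sheaves on $Y$; in particular $F_m^{\star\star}$ is locally free.

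It then remains to prove that $F_m = F_m^{\star\star}$, and this is the delicate step. I would argue locally at $y_0 \in \Sigma$: choose a small neighborhood $U$ on which $V_\rho$ admits a flat trivializing frame $e_1, \dots, e_r$. Under the isometric identification on $U \setminus \Sigma$, each $e_i$ corresponds to a holomorphic section $\wt e_i$ of $mK_{X/Y} + m\Delta$ on $p^{-1}(U) \setminus p^{-1}(\Sigma)$ whose fibrewise $L^{2/m}$ norm (with respect to the canonical metric defining $\widetilde{g}_{X/Y}$) is \emph{constant} on $U \setminus \Sigma$. The klt hypothesis on $(X, \Delta)$ together with a submean-value estimate for holomorphic sections with $L^{2/m}$ bounds converts this uniform $L^{2/m}$ control into uniform pointwise boundedness of $\wt e_i$ on compact subsets of $p^{-1}(U) \setminus p^{-1}(\Sigma)$, measured in any smooth reference metric on $mK_{X/Y} + m\Delta$. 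Riemann's extension theorem then extends $\wt e_i$ across $p^{-1}(\Sigma)$, so each $e_i$ lies in $F_m(U)$; thus the $e_i$ trivialize $F_m$ on $U$, which gives $F_m = V_\rho$ as Hermitian flat bundles on $Y$.

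The main obstacle is precisely this last extension step. The preimage $p^{-1}(\Sigma)$ in $X$ can well contain codimension-one components (divisors projecting into $\Sigma$), so Hartogs is not directly applicable; the argument genuinely uses the constancy of the fibrewise $L^{2/m}$ norm together with the klt property of $(X, \Delta)$ to upgrade an $L^{2/m}$ bound to a local $L^\infty$ bound before invoking Riemann extension. Once this local extension is in place, the rest of the statement (smoothness of the metric, vanishing curvature, local freeness on all of $Y$) follows directly from the identification with $V_\rho$.
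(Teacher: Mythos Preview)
Your architecture coincides with the paper's: kill the curvature of $\det F_m$, invoke Corollary~\ref{det}(b) to get a genuine Hermitian flat structure on $Y\setminus\Sigma$, globalize it to a flat bundle $\cE$ on $Y$ (the paper builds $\cE$ from a contractible cover and unitary transition matrices rather than via $\pi_1(Y\setminus\Sigma)\simeq\pi_1(Y)$, but this is the same construction), and then prove $F_m\simeq\cE$ by showing that every local flat section of $\cE$ comes from a section of $mK_{X/Y}+m\Delta$ that extends across $p^{-1}(\Sigma)$.

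The gap is in your extension mechanism. First, a minor point: $\widetilde g_{X/Y}$ is \emph{not} the $L^{2/m}$ metric but the $L^2$ metric of Theorem~\ref{ext}, applied to $K_{X/Y}+L_m$ with $L_m=(m-1)K_{X/Y}+m\Delta$ and weight $(m-1)\varphi_{X/Y}+\varphi_\Delta$ (see the beginning of Section~4). Second, and this is the real issue, an $L^{2/m}$ bound with $m\ge 2$ does not force holomorphic extension across a divisorial component of $p^{-1}(\Sigma)$: locally $\int_{|z_1|<1} |z_1|^{-2N/m}\,d\lambda$ converges for every $N\le m-1$, so poles of order up to $m-1$ survive. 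Your sub-mean-value step only yields bounds on balls contained in $p^{-1}(U)\setminus p^{-1}(\Sigma)$, and these degenerate as the center approaches $p^{-1}(\Sigma)$; you never obtain the uniform bound \emph{up to} the bad set that Riemann extension requires. The paper closes this by a different route, already used in Proposition~\ref{notorsion}: since the weight $(m-1)\varphi_{X/Y}+\varphi_\Delta$ is bounded from above (Theorem~\ref{rel2} plus the obvious bound on $\varphi_\Delta$), a section with fibrewise $\widetilde g_{X/Y}$-norm identically $1$ has \emph{finite $L^2$ norm on $p^{-1}(U)$ with respect to any smooth reference metric}, and an $L^2$ holomorphic section extends across an analytic set of arbitrary codimension. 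Replacing your sub-mean-value argument by this $L^2$ extension makes the proof correct and essentially identical to the paper's.
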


\begin{proof} We use the same argument as in the beginning of subsection \ref{case2}, and infer 
the existence of a subset of $\Sigma\subset Y$ such that the properties (a)--(c) 
cf. \ref{case2}, right after \eqref{01125},
hold true.

\noindent As we will see next, our result is a consequence of the correspondence between the 
unitary representations of the fundamental group of $Y$ and the Hermitian flat vector bundles 
on $Y$, together with the fact that $\codim_Y\Sigma\geq 2$.

Let $(U_\alpha)$ be a finite cover of $Y$ with contractible coordinate sets. For each index $\alpha$ the set $U_\alpha\setminus \Sigma$ is equally contractible --this is a consequence 
of (a) we are referring above. The parallel translation with respect to the flat metric 
$\displaystyle \wt g_{X/Y}|_{Y\setminus \Sigma}$ gives a holomorphic trivialization
\begin{equation}\label{20160208}
\theta_\alpha: F_m|_{U_\alpha\setminus \Sigma}\to \big(U_\alpha\setminus \Sigma\big)\times \bC^r
\end{equation}
which is moreover an \emph{isometry} provided that the right hand side in \eqref{20160208} 
is endowed with the Euclidean metric. Therefore the transition functions, say 
$g_{\alpha\beta}$, corresponding to 
the trivializations $\theta_\alpha$  are matrices in ${\mathbb U}(r)$.

Let $(\cE, h_E)$ be the Hermitian vector bundle of rank $r$ on $Y$ given locally by 
$\displaystyle \big(U_\alpha\times \bC^r\big)_{\alpha}$, together with the transition functions 
$\big(g_{\alpha\beta}\big)_{\alpha, \beta}$ (so that the metric $h_E$ corresponds to the flat metric on 
$U_\alpha\times \bC^r$). 
This construction gives an isometry 
\begin{equation}\label{20160209}
i: F_m|_{Y\setminus \Sigma}\to \cE|_{Y\setminus\Sigma}.
\end{equation}
By Hartogs' theorem, $i$ extends to an injection of sheaves 
\begin{equation}\label{exteni}
F_m \rightarrow \cE
\end{equation}
on $Y$ which we still denote by $i$. We show next that in fact the map \eqref{exteni} is 
an isomorphism.

To this end let $U$ be a coordinate topologically open subset of $Y$ and $u\in H^0 (U, \cE)$ be 
a holomorphic section whose norm at each point of $U$ is equal to one, i.e. 
\begin{equation}\label{20160210}
|u|_{h_E} (y) =1
\end{equation}
for each $y\in U$. By \eqref{20160209}, the restriction of $u$ to $U\setminus \Sigma$ belongs to the image of $i$, so that we have 
\begin{equation}
u|_{U\setminus \Sigma}= i(v_0)
\end{equation}
for a $v_0\in H^0 (U\setminus \Sigma, F_m)$. The main point next is that $i$ is an isometry,
so we infer that 
\begin{equation}
|v_0|_{\widetilde{g}_{X/Y}} (y) =1
\end{equation}
for any $y\in U\setminus\Sigma$. 

The section $v_0$ corresponds to an element in 
$\displaystyle H^0\big(p^{-1}(U\setminus \Sigma), m K_{X/Y}+m\Delta\big)$ whose pointwise norm with respect to the canonical $L^2$ metric is equal to one. Therefore, we are in the same situation as in Proposition \ref{notorsion}, so $v_0$ extends across the inverse image of $\Sigma$. In conclusion, we have constructed a local section $v$ of $\displaystyle F_m|_{U}$ such that 
$i(v)= u$ and the theorem is proved.
\end{proof}

\medskip

\noindent A rather immediate application of Theorem \ref{positivelemma} and Theorem \ref{flat}, 
is the next result, which already appears in \cite{konnar} in a slightly less general form.

\begin{lemma}\label{onethenall}
Let $p: X\rightarrow Y$ be an algebraic fiber space, and let $\Delta$ be an effective $\mathbb{Q}$-divisor, such that $(X, \Delta)$ is klt. 
If there exists an integer $m \geq 2$ such that
$$c_1 (p_\star (m K_{X/Y} +m \Delta))=0 \in H^{1,1} (Y, \mathbb{R}), $$
then $F_{m_1} := p_\star (m_1 K_{X/Y} +m_1 \Delta)$ is hermitian flat for any $m_1 \in \mathbb{N}$ 
such that 
$F_{m_1}$ is non zero.
\end{lemma}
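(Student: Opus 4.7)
The strategy is to combine Theorem \ref{flat} (which converts topological triviality of the determinant into Hermitian flatness of the full bundle) with Theorem \ref{positivelemma} (which controls the relative canonical bundle by the determinant), and then propagate flatness from the distinguished $m$ to every non-zero $m_1$ using the multiplication morphisms in the relative pluricanonical algebra.

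First, I would apply Theorem \ref{flat} directly. The hypothesis $c_1(p_\star(mK_{X/Y}+m\Delta))=0$ in $H^{1,1}(Y,\mathbb{R})$ is precisely the topological triviality of $\det F_m$, so Theorem \ref{flat} yields that $(F_m,\widetilde g_{X/Y})$ is a Hermitian flat vector bundle on $Y$.

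To establish Hermitian flatness of $F_{m_1}$ for arbitrary $m_1$ with $F_{m_1}\neq 0$, a second application of Theorem \ref{flat} reduces the problem to showing $c_1(\det F_{m_1})=0$ in $H^{1,1}(Y,\mathbb{R})$. The canonical $L^2$ metric $\widetilde g_{X/Y}$ on $F_{m_1}$ (constructed as in Section 4 via Theorem \ref{ext}) has Griffiths semi-positive curvature, so $c_1(\det F_{m_1})\geq 0$ as a pseudo-effective class. On the projective manifold $Y$, a pseudo-effective class whose negative is also pseudo-effective must vanish (intersect both representing currents with a Kähler form and use positivity), so it is enough to exhibit a complementary upper bound $c_1(\det F_{m_1})\leq 0$.

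To produce this upper bound I would invoke Theorem \ref{positivelemma} with parameter $m_1$: it furnishes an effective divisor $E$ on $X$ with $\codim_Y p_\star E\geq 2$ and a constant $\varepsilon_0>0$ such that
$$
c_1(K_{X/Y}+\Delta+E)\;\geq\;\varepsilon_0\cdot p^\star c_1(\det F_{m_1})
$$
as pseudo-effective classes. This lower bound is then coupled with the Hermitian flatness of $F_m$ through the natural multiplication morphism $\mathrm{Sym}^{k}F_m\to F_{km}$ induced by the pluricanonical ring structure: for $k$ sufficiently divisible, this map is generically surjective by finite generation of the relative pluricanonical ring of the klt pair $(X_y,\Delta_y)$ on generic fibers \cite{BCHM}. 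Since tensor and symmetric constructions preserve Hermitian flatness, $\mathrm{Sym}^{k}F_m$ is flat, so $F_{km}$ is a quotient of a Hermitian flat bundle (outside a set of codimension $\geq 2$), and this rigidity combined with the positivity of its canonical $L^2$ metric pins $c_1(\det F_{km})$ down to $0$. Picking $k$ so that $km$ is divisible by $m_1$, a symmetric application of $\mathrm{Sym}^{km/m_1}F_{m_1}\to F_{km}$, together with the inequality above, then transfers the vanishing from $\det F_{km}$ back to $\det F_{m_1}$, closing the argument.

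The main obstacle is the crucial step $c_1(\det F_{km})=0$. The naïve quotient-of-a-flat-bundle argument gives only a semi-positive curvature bound on $F_{km}$, matching what the canonical $L^2$ metric already provides, and thus does not directly force the determinant to be numerically trivial. Overcoming this will require a careful comparison between the quotient metric induced from $\mathrm{Sym}^{k}F_m$ and the canonical $L^2$ metric on $F_{km}$, using sub-multiplicativity of the fiberwise $L^{2/(km)}$ norms under multiplication of pluricanonical sections together with the explicit description of $\widetilde g_{X/Y}$ from Theorem \ref{ext}; this metric comparison is the technical crux on which the entire propagation of flatness rests.
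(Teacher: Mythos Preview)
Your reduction is on target: by Theorem \ref{flat} it suffices to show $c_1(\det F_{m_1})=0$, and invoking Theorem \ref{positivelemma} for the parameter $m_1$ to obtain
\[
c_1(K_{X/Y}+\Delta+E)\ \geq\ \varepsilon_0\, p^\star c_1(\det F_{m_1}),\qquad \codim_Y p_\star(E)\geq 2,
\]
is exactly the right first move. The problem is what you do next. Routing through the multiplication maps $\mathrm{Sym}^k F_m\to F_{km}$ is a detour that, as you yourself point out, does not close: a quotient of a Hermitian flat bundle need not have trivial determinant, and the ``metric comparison'' you sketch at the end is not carried out. Moreover, even if you could conclude $c_1(\det F_{km})=0$ for all $k$, your proposed transfer back to $\det F_{m_1}$ via $\mathrm{Sym}^{km/m_1}F_{m_1}\to F_{km}$ faces the same obstruction in reverse.

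The paper bypasses all of this with a single direct step. Having obtained the inequality above, one equips the twist $G:=(m-1)(K_{X/Y}+\Delta)+E$ with a metric $h$ whose curvature satisfies
\[
\Theta_h(G)\ \geq\ \varepsilon\, p^\star \Theta_{h_L}(\det F_{m_1})
\]
for some $\varepsilon>0$ (this is immediate from the pseudo-effectivity just established). Then Lemma \ref{base+} feeds this curvature lower bound straight into the determinant of the direct image:
\[
\Theta\big(\det p_\star(mK_{X/Y}+m\Delta+E)\big)\ \geq\ r\varepsilon\,\Theta_{h_L}(\det F_{m_1}).
\]
Since $E$ projects in codimension $\geq 2$, the left-hand determinant coincides with $\det F_m$, whose first Chern class is zero by hypothesis. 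Hence the closed positive current $\Theta_{h_L}(\det F_{m_1})$ is forced to vanish identically, and Theorem \ref{flat} finishes. No multiplication morphisms, no $F_{km}$, no symmetric powers are needed; the missing idea in your argument is precisely Lemma \ref{base+}, which converts a curvature lower bound on the twist into a curvature lower bound on the determinant of the direct image.
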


\begin{proof} Let $m_1$ be a positive integer as above. The idea of the proof is very simple: the positivity carried by the determinant of $\displaystyle F_{m_1}$ can be injected into $\displaystyle F_{m}$ via Theorem \ref{positivelemma}. Since we already know that the first Chern class of $\displaystyle F_{m}$ is equal to zero, the conclusion follows. We give next the details.

As we have already seen, the determinant 
$L_1: = \det F_{m_1}$ of the sheaf $\displaystyle F_{m_1}$ is a pseudo-effective line bundle.
There exists thus a metric $h_L$ on $L_1$ such that 
$$\Theta_{h_L} (L_1) \geq 0$$
in the sense of currents on $Y$. 
By applying Theorem \ref{positivelemma} (with the $\bQ$-bundle ``$L$'' corresponding to $\Delta$)
we infer the existence of an effective divisor $T$ on $X$ such that 
the codimension of the projection is large, i.e. $\displaystyle  \codim_Y (p_\star (T)) \geq 2$ and such that $\displaystyle c_1 (K_{X/Y}+\Delta +T - \epsilon_1 p^\star L_1)$ is pseudo-effective on $X$ 
for some positive $\epsilon_1 >0$.

As a consequence, we can equip the bundle $(m-1)(K_{X/Y}+\Delta) +T$ with a metric $h$ such that 
\begin{equation}\label{basepositive}
\Theta_h \big((m-1)(K_{X/Y}+\Delta) +T\big) \geq \epsilon_2 p^\star\Theta_{h_L} (L_1)
\end{equation}
on $X$, where $\epsilon_2 >0$ is a positive real number, and such that
\begin{equation}\label{multi}
H^0 \left(X_y , \big(m(K_{X/Y}+ \Delta)+ T\big)\otimes \mathcal{I} (h|_{X_y})\right) = 
H^0 \left(X_y,  m(K_{X/Y}+ \Delta)+ T|_{X_y}\right)
\end{equation}
for generic $y\in Y$.
\smallskip

\noindent Let $h_{X/Y}$ be the metric on $p_\star (m K_{X/Y}+m\Delta +T)$ induced by $h$ as in Theorem \ref{ext}.

By Lemma \ref{base+} below, the relation \eqref{basepositive} 
gives the following lower bound for the curvature of the determinant: we have
\begin{equation}\label{rev1}
\Theta_{\det_{h_{X/Y}}} \!\!\! \det p_\star (m K_{X/Y}+m\Delta +T) \geq r \epsilon_2 \Theta_{h_L} (L_1),
\end{equation}
on $Y$, where $r$ is the rank of the direct image 
$\displaystyle p_\star (m K_{X/Y}+m\Delta +T)$.
\smallskip

Since $\codim_Y (p_\star (T)) \geq 2$, we have 
$$\det p_\star (m K_{X/Y}+m\Delta +T) = \det p_\star (m K_{X/Y}+m\Delta) = 0 \in H^{1,1} (Y, \mathbb{R}).$$
Combining this with \eqref{rev1}, we know that 
the current $\displaystyle \Theta_{h_L} (L_1)$ is identically zero, and the lemma is proved.
\end{proof}
\medskip

\noindent The following statement was used during the proof of Lemma \ref{onethenall}.

\begin{lemma}\label{base+} Let $p:X\to Y$ be an algebraic fiber space. Let 
  $(G, h_G)$ and $(L, h_L)$ be Hermitian line bundles on $X$ and $Y$, respectively. 
The metrics $h_G$ and $h_L$ are allowed to be singular and we assume that the 
following requirements hold true.
\begin{enumerate}

\item[(i)] We have $\displaystyle \Theta_{h_L}(L)\geq 0$ and 
\begin{equation}\label{onX}
\Theta_{h_G}(G)\geq \ep_0p^\star\Theta_{h_L}(L) 
\end{equation}
on $X$, where $\ep_0$ is a positive real number.
\smallskip

\item[(ii)] The direct image $p_\star(K_{X/Y}+ G)$ is non-zero, and we have
\begin{equation}\label{intcond}
p_\star\left((K_{X/Y}+ G)\otimes \cI(h_G)\right)_y= p_\star\left(K_{X/Y}+ G\right)_y
\end{equation}
for any generic $y\in Y$.
\end{enumerate}
Let $h_{X/Y}$ be the $L^2$ metric on the direct image sheaf 
$\displaystyle p_{\star}\left(K_{X/Y}+ G\right)$ induced by $h_G$.   
Then we have 
\begin{equation}\label{base++}
\Theta_{\det h_{X/Y}}\left(\det p_{\star}(K_{X/Y}+ G)\right)\geq r\ep_0\Theta_{h_L}(L) 
\end{equation} 
on $Y$.
\end{lemma}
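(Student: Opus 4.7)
The plan is a tensor trick that reduces the statement to the already established positivity of the $L^2$ metric on a modified direct image. Consider the singular Hermitian weight $\varphi_{G'} := \varphi_G - \ep_0\, p^\star \varphi_L$, which I view as a metric $h_{G'} := h_G \otimes p^\star h_L^{-\ep_0}$ on the $\bR$-line bundle $G' := G - \ep_0\, p^\star L$. By hypothesis (i) the curvature current
\[
\Theta_{h_{G'}}(G') \;=\; \Theta_{h_G}(G) - \ep_0\, p^\star \Theta_{h_L}(L) \;\geq\; 0
\]
is semi-positive on $X$. Moreover, since $p^\star \varphi_L|_{X_y}$ is the constant $\varphi_L(y)$, the multiplier ideals $\cI(h_{G'}|_{X_y})$ and $\cI(h_G|_{X_y})$ coincide on every generic fiber, so the integrability condition (ii) transfers verbatim to $(G', h_{G'})$. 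Hence Theorem \ref{ext} applies to this modified datum, and Corollary \ref{det} provides a positively curved metric $\det h'_{X/Y}$ on the line bundle $\det p_\star(K_{X/Y}+G')$.

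To conclude, I would compare the two determinant metrics via the projection formula $p_\star(K_{X/Y}+G') = p_\star(K_{X/Y}+G) \otimes L^{-\ep_0}$. In a local holomorphic frame $(e_1,\dots,e_r)$ of $F := p_\star(K_{X/Y}+G)$ over a trivializing chart for $L$, the corresponding Gram matrices are related by
\[
h'_{ij}(y) \;=\; \int_{X_y} \langle e_i, e_j\rangle\, e^{-\varphi_G + \ep_0\, p^\star \varphi_L}\, d\lambda \;=\; e^{\ep_0\,\varphi_L(y)}\, h_{ij}(y),
\]
the last equality using that $p^\star \varphi_L$ is constant along fibers. Hence $\det h'_{ij} = e^{r\ep_0\,\varphi_L}\, \det h_{ij}$, and the plurisubharmonicity of $-\log \det h'_{ij}$ (guaranteed by the previous paragraph) translates into
\[
\Theta_{\det h_{X/Y}}\!\bigl(\det F\bigr) \;=\; -\sqrt{-1}\,\ddbar \log \det h_{ij} \;\geq\; r\ep_0\, \sqrt{-1}\,\ddbar \varphi_L \;=\; r\ep_0\, \Theta_{h_L}(L),
\]
which is the desired inequality.

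The main technical subtlety is that when $\ep_0$ is irrational, $G'$ is only a virtual $\bR$-line bundle, and one must check that Theorem \ref{ext} and Corollary \ref{det} still apply. This is not a real obstacle: the $L^2$ construction and the positivity results depend only on the local psh weight $\varphi_{G'}$, not on the integrality of the underlying bundle, so the fiber-integral formula above makes sense as stated. Alternatively, one may approximate $\ep_0$ from below by rationals $\ep' \uparrow \ep_0$, pass to a sufficiently divisible multiple of $L$ so that $\ep' L$ becomes a genuine line bundle, apply the integral case, and take the limit of currents.
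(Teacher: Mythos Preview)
Your argument is correct and is essentially the same tensor trick the paper uses: subtract $\ep_0\, p^\star\varphi_L$ from the weight of $h_G$, invoke the positivity of the resulting $L^2$ metric via \cite{PT}, and then untwist using that $p^\star\varphi_L$ is fiberwise constant. The only presentational difference is that the paper carries this out locally over a trivializing open set $\Omega_0\subset Y$ for $L$, so the modified weight is still a metric on the same bundle $G$ and the $\bR$-line-bundle issue you address in your final paragraph never comes up.
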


\begin{proof}Our arguments are based on the main results in \cite{PT} combined with 
very basic properties of psh functions, as follows.

Let $y_0\in Y$ be an arbitrary point, and let $\Omega_0\subset Y$ 
be an open subset of $Y$ centered at $y_0$. Let $\Omega\subset p^{-1}(\Omega_0)$ 
be an open subset of $X$ 
contained in the inverse image of $\Omega_0$. We assume that the restriction of $h_G$ to $\Omega$ is given by the weight $e^{-\varphi_G}$, and that the restriction of $h_L$ to $\Omega_0$ is given by $e^{-\varphi_L}$. By hypothesis we have 
\begin{equation}\label{5231}
\sqrt{-1}\ddbar \varphi_L\geq 0, \quad \sqrt{-1}\ddbar \varphi_G\geq \varepsilon_0
p^\star\sqrt{-1}\ddbar \varphi_L
\end{equation}       
on $\Omega$. In particular, we can write the local weight of $h_G$ 
\begin{equation}\label{5232}
\varphi_G= (\varphi_G-\ep_0\varphi_L\circ p)+ \ep_0\varphi_L\circ p 
\end{equation}
as sum of two psh functions (actually the difference 
$\varphi_G-\ep_0\varphi_L\circ p$ is only psh up to modification in the complement of a measure zero set, but this is not relevant for what we have to do next).

We remark that the local expressions 
$\displaystyle \varphi_G-\ep_0\varphi_L\circ p$ glue together as a global metric on 
$G|_{p^{-1} (\Omega_0)}$; the resulting object is denoted by 
\begin{equation}\label{5233} 
h_{0G}:= e^{-\ep_0\varphi_L\circ p}h_G
\end{equation}
and we have $\displaystyle \Theta_{h_{0G}}(G|_{p^{-1}(\Omega_0)})\geq 0$. By \cite{PT}, 
the restriction 
\begin{equation}\label{5234}
p_\star(K_{X/Y}+ G)|_{\Omega_0}
\end{equation} 
is positively curved when endowed with the $L^2$ metric induced by $h_{0G}$. 
Here the important fact is that the hypothesis \eqref{intcond} implies a similar relation for 
$h_{0G}$.
  
Let $\xi$ be a local holomorphic section of the dual of the bundle \eqref{5234}. The 
positivity of the curvature together with the expression \eqref{5233} show that we have
\begin{equation}\label{5235}
\sqrt{-1}\ddbar \log|\xi|_{h_{X/Y}}^2\geq \ep_0\sqrt{-1}\ddbar\varphi_L
\end{equation}
and this implies \eqref{base++}.   
\end{proof}

\medskip

\noindent We recall next the following deep result due to K.\ Zuo \cite[Cor 1]{Zuo96} (see also \cite[Thm 1]{CCE}); it will play a crucial role in our next statement. 

\begin{theorem}\label{mathkang} {\rm (\cite[Cor 1]{Zuo96})} Let $Y$ be a compact K\"ahler manifold, and let $\rho$ be a finite dimensional representation of $\pi_1(Y)$, whose Zariski closure is a reductive algebraic group. If $\kappa(Y)= 0$, the $\rho$ splits as a direct sum of representations of rank one, modulo an \'etale cover of $Y$. 
\end{theorem}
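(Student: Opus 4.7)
The plan is to combine the non-abelian Hodge correspondence of Corlette and Simpson with the geometric constraints imposed by $\kappa(Y)=0$. Since the Zariski closure of $\rho(\pi_1(Y))$ is reductive, Corlette's existence theorem provides a $\rho$-equivariant harmonic map from the universal cover $\widetilde{Y}$ to the associated symmetric space, and via Simpson's correspondence this translates into a harmonic metric on the flat bundle $V_\rho$ attached to $\rho$, together with a polystable Higgs bundle $(E,\theta)$ on $Y$ having vanishing rational Chern classes. After passing to the associated graded with respect to the $\bC^{\times}$-action on the moduli space, one may further assume that $(E,\theta)$ underlies a complex variation of Hodge structure $E=\bigoplus E^{p,q}$ with $\theta:E^{p,q}\to E^{p-1,q+1}\otimes\Omega^1_Y$.

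The decisive step is to show that the Higgs field must vanish. A nonzero $\theta$ produces, by iteration and saturation, a coherent subsheaf of some $\Sym^k\Omega^1_Y$ whose determinant is pseudo-effective and nontrivial; in the spirit of Viehweg--Zuo, such positivity sitting inside the cotangent algebra contributes to $\kappa(Y)$, contradicting $\kappa(Y)=0$. In Zuo's original formulation this is phrased via the Shafarevich-type morphism associated to $\rho$: if $\theta\ne 0$, then the base of this morphism would be of general type, which is incompatible with $\kappa(Y)=0$. Once $\theta\equiv 0$ is established, the harmonic metric on $V_\rho$ is parallel, so $\rho$ (which equals its semisimplification, because the Zariski closure is reductive) is unitary.

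With $\rho$ unitary in hand, one invokes Kawamata's theorem on Albanese maps: since $\kappa(Y)=0$, after a finite \'etale cover of $Y$ the Albanese morphism $\alpha:Y\to\Alb(Y)$ becomes surjective with connected fibers. Because $\theta=0$, a factorization theorem in the spirit of Katzarkov--Ramachandran and Eyssidieux for Shafarevich morphisms of reductive representations shows that $\rho$ trivializes on the $\pi_1$ of a general fiber of $\alpha$, after a further \'etale cover. Hence $\rho$ descends to a unitary representation of the free abelian group $\pi_1(\Alb(Y))$, which decomposes, after passing once more to a finite-index subgroup (corresponding to an isogeny of $\Alb(Y)$, i.e.\ to another \'etale cover of $Y$), into a direct sum of rank-one characters. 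Pulling this decomposition back to $Y$ yields the desired splitting of $\rho$.

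The main obstacle is undoubtedly the vanishing of the Higgs field from the hypothesis $\kappa(Y)=0$. This is where the deepest input lies: it requires a careful use of the curvature formulas for polystable Higgs bundles, the semi-positivity built into the kernel filtration of $\theta$, and the positivity-extraction techniques of Viehweg--Zuo applied to subsheaves of symmetric powers of $\Omega^1_Y$. The remaining steps --- the Corlette--Simpson correspondence, Kawamata's Albanese theorem, and the fact that unitary representations of a free abelian group split into characters after a finite cover --- are essentially formal once that vanishing is known.
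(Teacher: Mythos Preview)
The paper does not give a proof of this statement at all: it is quoted verbatim as a deep external result of Zuo (with \cite{CCE} as a further reference) and then used as a black box in Section~5. So there is no ``paper's own proof'' to compare your sketch against.

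That said, your outline is in the right circle of ideas but departs from Zuo's actual route and has a real gap. Zuo's argument is structural on the target group: the identity component of the reductive Zariski closure $H$ is an almost-direct product of a torus and almost simple factors; projecting to any simple factor gives a Zariski-dense representation into an almost simple group, and Zuo's main theorem then forces the associated Shafarevich variety to be of \emph{general type}. Since that variety is dominated by $Y$, the hypothesis $\kappa(Y)=0$ rules out every simple factor, so $H^0$ is a torus, $\rho$ is virtually abelian, and after a finite \'etale cover it lands in $(\bC^\times)^r$ and splits into characters.

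Your sketch instead tries to reach the splitting via ``$\theta=0$, hence $\rho$ unitary, hence factor through the Albanese''. Two problems. First, passing to the $\bC^\times$-limit to get a CVHS changes the Higgs bundle, so proving the Higgs field of the \emph{limit} vanishes does not literally give $\theta=0$ for the original; what you need (and what Zuo extracts) is a constraint on the monodromy group itself, not on a degeneration. Second, and more seriously, ``unitary'' alone does not imply the desired splitting: an irreducible unitary representation of $\pi_1(Y)$ can have rank $>1$. Your appeal to a Katzarkov--Ramachandran/Eyssidieux-type factorization to descend to $\pi_1(\Alb Y)$ is exactly the missing hard content, and it is not lighter than Zuo's Shafarevich-map argument. (A minor point: once you \emph{are} on the free abelian group $\pi_1(\Alb Y)$, no further finite cover is needed---commuting unitary matrices are already simultaneously diagonalizable.)
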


\medskip

\noindent As a consequence of the considerations in this paper together with Theorem \ref{mathkang},
we obtain the following main theorem of this section.

\begin{theorem}\label{conseqflat} Let $p:X\to Y$ be an algebraic fiber space, 
and let $\Delta$ be an effective $\bQ$-divisor on $X$ such that $(X, \Delta)$ is klt. We assume that for some $m \geq 2$ the line bundle
$\det F_m$ is topologically trivial. Then we have
\begin{equation}\label{yesman2}
\kappa (K_X +\Delta) \geq  \kappa (K_F +\Delta_F) + \kappa(Y), 
\end{equation}
where $\Delta_F = \Delta|_F$.
\end{theorem}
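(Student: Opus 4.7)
The plan is to reduce to the case $\kappa(Y)=0$ via the Iitaka fibration of the base, and then to proceed by induction on $\dim Y$, combining Kawamata's Albanese theorem for varieties of Kodaira dimension zero with Theorem \ref{mainourthem} and Zuo's splitting (Theorem \ref{mathkang}).

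If $\kappa(Y)=-\infty$ the inequality is vacuous. Otherwise let $\phi:Y\to Z$ be the Iitaka fibration of $Y$, realized as a morphism after suitable birational modifications, so that $\dim Z=\kappa(Y)$, $Z$ is of general type, and a generic fiber $Y_z$ satisfies $\kappa(Y_z)=0$. By flat base change, the hypothesis on $\det F_m$ descends to $(X_z,\Delta|_{X_z})\to Y_z$, where $X_z:=p^{-1}(Y_z)$. Viehweg's $C_{n,m}$ for fibrations over a base of general type (classical, and valid in the klt setting) applied to $\phi\circ p:X\to Z$ then yields
\[
\kappa(K_X+\Delta)\;\geq\;\kappa(K_{X_z}+\Delta|_{X_z})+\kappa(Z)\;=\;\kappa(K_{X_z}+\Delta|_{X_z})+\kappa(Y),
\]
so it suffices to treat $\kappa(Y)=0$, which I will do by induction on $\dim Y$, the case $\dim Y=0$ being trivial.

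For the inductive step with $\kappa(Y)=0$, I apply Theorem \ref{mathkang} to the reductive unitary representation of $\pi_1(Y)$ arising from the Hermitian-flat bundle $F_m$ (cf.\ Theorem \ref{flat}): this produces a finite étale cover $\tau:Y'\to Y$ on which $\tau^\star F_m=\bigoplus_{i=1}^{r_m} L_i$ with $L_i\in\Pic^0(Y')$; observe $\kappa(Y')=0$. By Kawamata's theorem for Kodaira-dimension-zero varieties, the Albanese map $\alpha':Y'\to A':=\Alb(Y')$ is surjective with connected fibers, each of Kodaira dimension zero. If $\dim A'>0$, I apply Theorem \ref{mainourthem} to $\alpha'\circ p':X'\to A'$, where $X':=X\times_Y Y'$ and $p':X'\to Y'$: this gives $\kappa(K_{X'}+\Delta')\geq\kappa(K_G+\Delta'|_G)$ for the generic fiber $G:=(p')^{-1}(Y'_a)$. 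The inherited fibration $p'|_G:G\to Y'_a$ has the same generic fiber $F$, inherits the $\det F_m$-hypothesis by base change, and has $\dim Y'_a<\dim Y$, so the inductive hypothesis applied to $(G,\Delta'|_G)\to Y'_a$ closes this subcase (using that $\kappa$ is invariant under étale base change).

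If instead $\dim A'=0$, then $\Pic^0(Y')$ is finite and each $L_i$ is torsion, so a further finite étale cover $\sigma:Y''\to Y'$ trivializes all the $L_i$ simultaneously; hence $\sigma^\star\tau^\star F_m\simeq\mathcal O_{Y''}^{\oplus r_m}$. Its $r_m$ global sections give sections of $mK_{X''/Y''}+m\Delta''$ on $X'':=X'\times_{Y'}Y''$ restricting to a basis of $H^0(F,m(K_F+\Delta_F))$; for $m$ sufficiently divisible a standard Iitaka-dimension comparison yields $\kappa(K_{X''/Y''}+\Delta'')\geq\kappa(K_F+\Delta_F)$. Since $\kappa(Y'')=0$, there is an arithmetic progression of integers $k$ with $h^0(Y'',kK_{Y''})\geq 1$; multiplying sections of $k(K_{X''/Y''}+\Delta'')$ by pullbacks of sections of $kK_{Y''}$ yields $h^0(X'',k(K_{X''}+\Delta''))\geq h^0(X'',k(K_{X''/Y''}+\Delta''))$ along this progression, so $\kappa(K_{X''}+\Delta'')\geq\kappa(K_F+\Delta_F)$, and étale invariance of $\kappa$ concludes. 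The main obstacle is this last subcase, where the triviality of $F_m$ on $Y''$ controls only the relative Kodaira dimension and one must absorb $K_{Y''}$ along the above arithmetic progression, while simultaneously verifying that the $\det F_m$-triviality hypothesis descends through the Zuo cover, the Albanese restriction, and the final trivializing cover.
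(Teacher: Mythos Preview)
Your overall architecture differs from the paper's: rather than treating the cases $\kappa(Y)=-\infty,\,0,\,\geq 1$ separately, you first reduce to $\kappa(Y)=0$ via the Iitaka fibration of $Y$, and then induct on $\dim Y$ using the Albanese map together with Theorem~\ref{mainourthem}. The inductive step in the $\kappa(Y)=0$, $\dim A'>0$ subcase is a genuinely nice alternative to the paper's direct argument (which shows the $L_i$ are torsion via Theorem~\ref{cape}); here the Zuo cover is \'etale, so flat base change is honest, and $\kappa(Y'_a)=0$ follows from easy addition plus Theorem~\ref{mainourthem} applied to $Y'\to A'$.

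There is, however, a real gap in the reduction step. To make the Iitaka map $\phi:Y\dashrightarrow Z$ a morphism you must pass to a birational modification $\pi_Y:Y'\to Y$ and then to a resolution $X'\to X\times_Y Y'$. The direct image you need on $Y'_z$ is $F'_m|_{Y'_z}$ where $F'_m:=p'_\star(mK_{X'/Y'}+m\Delta_{X'})$, and in general $F'_m\neq \pi_Y^\star F_m$ along the exceptional locus of $\pi_Y$. The curvature current of $\det F'_m$ is still $\geq 0$, but it can pick up contributions supported on the exceptional divisors, so there is no reason for $\det F'_m$ (or its restriction to $Y'_z$) to remain topologically trivial. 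The paper flags this explicitly just after \eqref{sg31} (``it is not clear that the bundle $F'_m$ is flat'') and spends Lemmas~\ref{kodaz}, \ref{ext_gen}, \ref{int} together with an Ohsawa--Takegoshi extension precisely to get around it. Your sentence ``by flat base change, the hypothesis on $\det F_m$ descends'' is therefore hiding the hardest part of the argument. A secondary issue is that the klt version of $C_{n,m}$ over a base of general type, while plausible, is not simply ``classical'' and would need a precise reference or derivation.

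There is also a gap in the $\dim A'=0$ subcase. Knowing $\sigma^\star\tau^\star F_m\simeq \cO_{Y''}^{\oplus r_m}$ for \emph{one} $m$ does not by itself give $\kappa(K_{X''/Y''}+\Delta'')\geq\kappa(K_F+\Delta_F)$: Kodaira dimension measures growth over all large multiples, yet the cover $Y''$ and the splitting depend on $m$. The missing input is finite generation \cite{BCHM}, which gives a surjection $\bigotimes^q F_m\twoheadrightarrow F_{qm}$; since the $L_i$ are torsion this forces $\tau^\star F_{qm}$ to be globally generated for $q$ sufficiently divisible, whence $h^0(Y',\tau^\star F_{qm})\geq\rk F_{qm}\sim C q^{\kappa(K_F+\Delta_F)}$. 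This is exactly what the paper does in its Case~2, and without it your ``standard Iitaka-dimension comparison'' is not justified.
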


\begin{proof}

Since $\det F_m  =0$ we infer that $F_m$ is hermitian flat vector bundle of rank $r$ over $Y$,
by Theorem \ref{flat}.
We obtain a representation 
$$\rho: \pi_1 (Y) \rightarrow \mathbb{U} (r)$$
induced by the parallel transport corresponding to $(F_m, \wt g_{X/Y})$. 

\noindent The unitary group 
${\mathbb U}(r)$ is compact, therefore the Zariski closure of the image of $\rho$ is 
automatically \emph{reductive}. 
\medskip

\noindent We are therefore lead to distinguish among three cases, corresponding to the Kodaira dimension of $Y$.
\smallskip

\noindent $\bullet$ \emph{Case 1: The Kodaira dimension of $Y$ is equal to $-\infty$, i.e. $\kappa(Y)= -\infty$.}
In this case, we have immediately \eqref{yesman2}.

\medskip

\noindent $\bullet$ \emph{Case 2: The Kodaira dimension of $Y$ is equal to zero, i.e. $\kappa(Y)= 0$.} Thanks to Theorem \ref{mathkang}, we infer that $\rho$
splits into a direct sum of $1$-dimensional representations, after a finite \'{e}tale 
cover $p_Y: Y^\prime\rightarrow Y$. We denote by $X^\prime$ the fibered product of $X$ and $Y^\prime$ 
over $Y$, $p_X : X^\prime \rightarrow X$ the natural projection and $a: Y^\prime \rightarrow \Alb Y^\prime$ the Albanese map of $Y^\prime$.
\begin{equation*} 
\begin{CD}
    X^\prime @>{p_X}>> X \\
    @Vp^\prime VV      @VVpV  \\ 
    Y^\prime @>{p_Y}>>   Y \\
     @VaVV\\
    \Alb Y^\prime \\
\end{CD}
\end{equation*} 
We have thus 
\begin{equation}\label{directsumtrivial}
p' _\star (m K_{X' /Y'} + m\Delta') = p_Y ^\star F_m =\bigoplus_{j=1}^r L_j , 
\end{equation}
where $r$ is the rank of $F_m$, $\Delta' =p_X ^\star \Delta$ and $L_j \in \Pic^0 (Y')$. 

Thanks to Lemma \ref{onethenall}, we can assume that $m$ is large and sufficiently divisible. 
Then $\kappa (Y)=0$ implies that $a_\star (m K_{Y'})$ is a trivial line bundle.
Since $L_1 ,\cdots , L_r$ are pulled back from $\Alb Y'$, we have $L_j = a^\star \widetilde{L}_j$ for some $\widetilde{L}_j \in \Pic^0 (\Alb Y')$.
We have thus 
\begin{equation}\label{decomp}
(a\circ p')_\star (m K_{X'} + m\Delta') \simeq a_\star (m K_{Y'}) \otimes (\bigoplus_{j=1}^r \widetilde{L}_j ) \simeq \bigoplus_{j=1}^r \widetilde{L}_j . 
\end{equation}
Set $V_{1, m}:=\{L\in \Pic^0 (\Alb Y') | h^0 (m K_{X'} + m\Delta' + L) \geq 1 \} $.
Then \eqref{decomp} implies that 
\begin{equation}\label{adfinitset}
 V_{1, m} = \{-\widetilde{L}_1, \cdots, -\widetilde{L}_r\} .
\end{equation}
In this case 
it follows from Theorem \ref{cape} $(b)$ that $V_{1, m}$ is a finite union 
of torsion line bundles (because the corresponding subtori much have dimension zero).
Then the bundles $\widetilde{L}_1, \cdots, \widetilde{L}_r$ have finite order.  Therefore $L_1, \cdots, L_r$ have finite order in $\Pic^0 (Y')$

By \cite{BCHM}, we infer that 
$$\bigotimes^q F_m \rightarrow F_{qm} = p_\star (qm K_{X/Y} +qm \Delta)$$
is surjective for every $q\in\mathbb{N}$. Combining this with \eqref{directsumtrivial}, 
we obtain the surjective morphism
$$ \bigotimes^q (\bigoplus_{j=1}^r L_j) \rightarrow p_Y ^\star F_{qm}  \qquad\text{for every }q\in\mathbb{N} .$$
As $L_1, \cdots, L_r$ have finite order, by taking $q$ divisible enough we have 
$$h^0 (X', qm K_{X'}+ qm \Delta') \geq C_1 \rank F_{qm} \geq C_2 q^{\kappa (K_F +\Delta_F)}$$ 
for some $C_1, C_2 >0$ independent of $q$; we remark that $p_X : X'\rightarrow X$ depends on $m$ but is independent of $q$.
Then 
$$\kappa (X', K_{X'} + \Delta') \geq \kappa (K_F +\Delta_F) .$$
Since the Iitaka dimension is not affected by the finite \'etale cover, we have thus
$$\kappa (X, K_X + \Delta) \geq \kappa (K_F +\Delta_F) +\kappa (Y).$$

\medskip

\noindent $\bullet$ \emph{Case 3: The Kodaira dimension of $Y$ is greater than one, i.e. $\kappa(Y)\geq 1$.} In this case we use the Iitaka map
corresponding to the canonical bundle of $Y$. 

Let $m_0\gg 0$ be a large enough 
positive integer, such that the Iitaka map 
$$q_0: Y\dasharrow Z$$ of $K_Y$ is given by 
the linear system $\displaystyle |m_0K_Y|$. Here $Z$ is a smooth manifold, of dimension equal to $\kappa(Y)$. 

There exist a modification of $\pi_Y : Y' \rightarrow Y$ such that the induced 
map $q:= q_0\circ\pi_Y$ is an algebraic fiber space  
\begin{equation}\label{sg40}
q: Y'\rightarrow Z
\end{equation}
such that $\kappa (Y' _z)=0$, where $Y' _z:= q^{-1}(z)$ is a general fiber of $q$. 
The map $\pi_Y$ is such that the base points of the bundle $\displaystyle \pi_Y^\star(m_0K_Y)$
are divisorial, i.e. there exists an effective divisor $D$ on $Y^\prime$ such that 
\begin{equation}\label{sg41} 
\pi_Y ^\star(m_0 K_Y)= D+ L
\end{equation}
where $L$ is a line bundle generated by its global sections. Actually, the map 
$q$ in \eqref{sg40} is associated to the base-point free linear system $|L|$.
Therefore, for any point $z\in Z$ which is not a critical value of $q$, the 
fiber $Y^\prime_z$ is given by the equations
\begin{equation}\label{sg42}
\sigma_1=0,\dots,\sigma_d= 0
\end{equation}
where $d= \kappa(Y)$ and the $\sigma_j$ above are sections of $L$.

\smallskip

Let $X'$ be a desingularization of the fibered product $X\times_Y Y'$. 
We consider the natural projection 
$p' : X' \rightarrow Y'$; we can assume that the exceptional divisor of the map
\begin{equation}\label{sg21}
X' \to X\times_Y Y' 
\end{equation}
is contained in the $p^\prime$-inverse image of $Y^\prime\setminus Y_0$, where by definition $Y_0$ is
the Zariski open subset of $Y^\prime$ such that $\displaystyle \pi_Y|_{Y_0}$ is biholomorphic.
Let  
\begin{equation}\label{sg22}
X' _z :=(p^\prime)^{-1} (Y' _z)
\end{equation}
be the inverse image of $Y^\prime_z$; 
it is a non-singular manifold, provided that $z$ is general enough.
We have 
\begin{equation}\label{sg23}
K_{X'/Y'} + \Delta_{X'}+ E_Y= \pi_X^\star\big(K_{X/Y} + \Delta_{X}\big)+ E_X
\end{equation}
where $E_X$ is $\pi_X$-exceptional, $p'(E_Y)$ is $\pi_Y$-exceptional and $\displaystyle (X', \Delta_{X'})$ is klt.
We will denote by $\displaystyle F_m^\prime :=p' _\star (m K_{X'/Y'} +m\Delta_{X'})$ the direct image corresponding to 
 $p'$ and $\widetilde{g}_{X' /Y'}$ the corresponding metric on $F_m ^\prime$.
\begin{equation*} 
\begin{CD}
    X^\prime @>{\pi_X}>> X \\
    @Vp^\prime VV      @VVpV  \\ 
    Y^\prime @>{\pi_Y}>>   Y \\
     @VqVV\\
    Z \\
\end{CD}
\end{equation*} 
\smallskip

The arguments we will use for the rest of our proof are a bit more complicated than what they should be, because it is not clear that the bundle $\displaystyle F_m^\prime $ is flat.

\noindent In any case, we have the natural isometry
\begin{equation}\label{pullbackmor}
 i: \pi_Y ^\star F_m \rightarrow F_m^\prime   \qquad\text{on }Y_0 .
\end{equation}
The inverse image bundle $\pi_Y ^\star F_m$ is Hermitian flat on $Y'$. As $i$ is an isometry, by the 
argument at the end of the proof of Theorem \ref{flat}, given any flat local section $u$ of $\pi_Y ^\star F_m$
the image 
\begin{equation}\label{sg30}
i(u|_{Y_0})
\end{equation}
extends across $Y'\setminus Y_0$ as a section of $F_m^\prime$. In this way we obtain a morphism
\begin{equation}\label{sg31}
 i: \pi_Y ^\star F_m \rightarrow F_m^\prime 
\end{equation} 
defined globally on $Y'$.

\noindent Theorem \ref{conseqflat} will be established by using classical $L^2$-extension theorem and the following three lemmas; our goal is to show that we have 
\begin{equation}\label{yesman3}
\kappa (K_X +\Delta) \geq \kappa (K_F +\Delta_F) + \kappa(Y).
\end{equation}
\smallskip

\noindent The first step of the remaining part of our proof is as follows.

\begin{lemma}\label{kodaz}
Let $z\in Z$ be a general point. Then we have
\begin{equation}\label{sg90}
\kappa (Y'_z, K_{Y'_z}+ S|_{Y'_z} )= 0
\end{equation}
for any effective divisor $S$ on $Y'$ whose support is contained in the 
exceptional loci of $\pi_Y$.
\end{lemma}

\begin{proof}
Let
\begin{equation}\label{triv1}
K_{Y'}= \pi_Y^\star K_Y+ \Lambda
\end{equation}  
where $\Lambda$ is the exceptional divisor of $\pi_Y$. The properties of the Iitaka map show 
in particular that we have 
\begin{equation}\label{triv2}
h^0(Y'_z, l K_{Y^\prime _z})= h^0\big(Y'_z,l\big(\pi_Y^\star K_Y+ \Lambda)|_{Y^\prime _z} \big)= 1
\end{equation} 
for every $l$ sufficiently divisible.

Fix $u_0$ a non-zero section of some multiple of $K_Y$. The support of $S$ is contained in $\Supp(\Lambda)$,
and therefore it would be enough to show that we have
\begin{equation}\label{triv3}
h^0(Y'_z, r_1 K_{Y'_z}+ r_2 \Lambda|_{Y'_z})= 1
\end{equation}
for any $r_1 , r_2 \in\mathbb{N}$ sufficiently divisible. 
This is however immediate, because the sections in \eqref{triv3} can be injected 
in the space $H^0 (Y' _z, (r_1+r_2)K_{Y'_z})$ by multiplication with an appropriate power of $u_0$. Thanks to \eqref{triv2}, 
$h^0 (Y' _z, (r_1+r_2)K_{Y'_z}) =1$ and so the proof is finished.
\end{proof}
\medskip

\noindent The next result establish a very useful map between the sections of the bundles
$\displaystyle m\big(K_{X'/ Y'}+ \Delta_{X^\prime}\big)|_{X'_z}$ and 
$\displaystyle m\pi_X^\star(K_{X/Y}+ \Delta_{X})|_{X'_z}$, respectively.

\begin{lemma}\label{ext_gen} 
For every $L\in \Pic^0 (Y' _z)$, the morphism \eqref{sg31} induces an isomorphism
\begin{equation}\label{isomorge}
H^0 (Y' _z , \pi_Y ^\star F_m \otimes L) \rightarrow H^0 (Y' _z, F_m^\prime \otimes L) . 
\end{equation}
In particular, we have the natural isomorphism
\begin{equation}\label{isomor}
H^0 (Y' _z , \pi_Y ^\star F_m ) \rightarrow H^0 (Y' _z, F_m^\prime) 
\end{equation}
and
\begin{equation}\label{fibrq}
\kappa (K_{X'_z} +\Delta_{X^\prime}|_{X' _z}) \geq  \kappa (K_F +\Delta_F) .
\end{equation}
\end{lemma}

\begin{proof}
As $F_m$ is hermitian flat and $\kappa (Y'_z)=0$, by Theorem \ref{mathkang},
we have
\begin{equation}\label{sg74}
\pi_X^\star F_m  |_{Y^\prime _z} = \bigoplus_{i=1}^r L_i ,
\end{equation}
 modulo an \'etale cover of $Y'_z$, where $L_i \in \Pic^0 (Y^\prime _z)$ and $r$ is the rank of $F_m$.
In what follows, we still denote by $Y'_z$ the resulting manifold, in order to keep the notation as clean as possible --we observe that  
it is enough to establish the lemma for the pull-back of $u$. 

\smallskip

Let $u\in H^0 (Y' _z, F_m^\prime \otimes L)$. Thanks to \eqref{pullbackmor}, $u$ induces a section
$$\wh u :=i^{-1} (u)\otimes (\sigma_T |_{Y' _z})\in H^0 (Y' _z ,  \pi_X^\star F_m \otimes T \otimes L) ,$$
for some effective divisor $T$ supported in the exceptional loci of $\pi_Y$, where $\sigma_T$ is the canonical section of $T$.
Let $s_i$ be the components of section $\wh u$ according to \eqref{sg74}, and let $v_0 \in H^0 (Y' _z,  m_1 K_{Y' _z})$ for some 
$m_1 \in\mathbb{N}$ large enough.
Then we have 
\begin{equation}\label{decompelement}
v_0 \otimes s_i\in H^0\big(Y'_z,  m_1 K_{Y' _z} + T+ L_i +L \big). 
\end{equation}
By combining Lemma \ref{kodaz} with Theorem \ref{cape}, we infer that $L_i +L$ 
is a torsion point in $\Pic^0 (Y'_z )$ as soon as the corresponding section $s_i$ is non-zero.
Moreover, we have already a section 
$v_0 \otimes (\sigma_T |_{Y' _z} )\in H^0 (Y' _z, m_1 K_{Y' _z}+T)$.
Using again Lemma \ref{kodaz}, we infer that 
$$\Div (s_i) = [\sigma_T |_{Y' _z}] .$$ 
Therefore 
$$\Div (\wh u) =\Div (i^{-1} (u)\otimes (\sigma_T |_{Y' _z})) = [\sigma_T |_{Y' _z}] .$$
As a consequence, $\frac{\wh u}{\sigma_T} \in H^0 (Y' _z ,  \pi_X^\star F_m \otimes L)$
and \eqref{isomorge} is proved.

\smallskip

\noindent It remains to prove \eqref{fibrq}. 
Set 
$$V_{1, m}:=\{L\in \Pic^0 (Y' _z) | h^0 (X'_z , m K_{X'_z} + m\Delta_{X'} + q^\star L) \geq 1 \}.$$
Then \eqref{isomorge} and \eqref{sg74} imply that 
\begin{equation}\label{obs}
 V_{1, m} = \{- L_1, \dots, - L_r\}.
\end{equation}
Indeed, let $\displaystyle L\in \Pic^0 (Y'_z)$ be a line bundle 
in the set $V_{1, m}$. The equality \eqref{isomorge} shows that some of 
the bundles $L-L_j$ is effective; this establishes \eqref{obs}. 
 
By using the same argument as in the case 2 of the proof of 
Theorem \ref{conseqflat}, we deduce that $L_1 , \dots, L_r$ have finite order. 
The inequality \eqref{fibrq} is therefore proved.

\end{proof}
\medskip

\noindent The last technical statement which we need is the following.

\begin{lemma}\label{int}
Let $V$ be a section of the 
bundle $\displaystyle m\pi_X^\star(K_{X/Y}+ \Delta_{X})|_{X'_z}$. Then we have
\begin{equation}\label{sg77}
\sup_{X'_z}|V|^2e^{-\varphi_{X/Y}}< C< + \infty
\end{equation}
for some constant $C> 0$. Here $e^{-\varphi_{X/Y}}$ is the pull-back of the corresponding relative Bergman metric on $X$.
\end{lemma}

\begin{proof}
We consider a flat local frame for  $\pi^\star F_m$; by this we mean that we have local sections
$\sigma_1,\dots, \sigma_r$ of $\pi^\star F_m$ at a point $y_0\in Y_z'$ such that the quantity
\begin{equation}\label{sg78}
\int_{X_y}|\sigma_j^0|^2e^{-\varphi_{\Delta_X}- (m-1)\varphi^{(m)}_{X/Y}}
\end{equation}
is a (non-identically zero) constant with respect to $y$ in the complement of an analytic subset. In the notation above,
$\sigma^0_j$ is such that $\sigma^0_j\wedge d p= \sigma_j$.

\noindent By definition, the weight of the Bergman metric satisfies the pointwise estimate
\begin{equation}\label{sg79}
\varphi_{X/Y}(x)\geq 
\log\frac{|\sigma_j|^2}{\int_{X_y}|\sigma_j^0|^2e^{-\varphi_{\Delta_X}- (m-1)\varphi^{(m)}_{X/Y}}}
\end{equation}
where $y=  p(x)$ and combined with \eqref{sg78}, we obtain
\begin{equation}\label{sg80}
|\sigma_j|^2e^{-\varphi_{X/Y}}\leq \int_{X_y}|\sigma_j^0|^2e^{-\varphi_{\Delta_X}- (m-1)\varphi^{(m)}_{X/Y}}\leq C.
\end{equation}
The result follows, since we can express the section $V$ in terms of the flat generators $\sigma_j$.
\end{proof}
\medskip

\noindent We have now at our disposal all the ingredients needed to finish the proof of Theorem \ref{conseqflat}. Let 
$u$ be a section of the bundle $m\pi_X^\star(K_{X/Y}+ \Delta_{X})|_{X'_z}$, and let $\tau_0$ be a section of 
the bundle $m_0 K_Y$, where we assume that $m$ and $m_0$ are greater than 2. 
Thanks to \eqref{sg31}, the tensor product 
\begin{equation}\label{ext4}
\wh u:= u\otimes (p\circ \pi_X)^\star\tau_0 
\end{equation} induces a section of the bundle 
\begin{equation}\label{ext2}
K_{X'}+ \Delta_{X'}+ (m- 1)\pi_X^\star(K_{X/Y}+ \Delta_{X})+ (m_0-1)(\pi_Y\circ p')^\star K_Y|_{X'_z} .
\end{equation}
We endow the bundle $\displaystyle \pi_X^\star(K_{X/Y}+ \Delta_{X})$ with the $m^{\rm th}$ root of the metric
used in Lemma \ref{int}, and the bundle $\pi_Y^\star K_Y$ with the metric given by the decomposition
\eqref{sg41}. By Ohsawa-Takegoshi extension theorem, there exists a section $U$ of the bundle 
\begin{equation}\label{ext3}
K_{X'}+ \Delta_{X'}+ (m- 1)\pi_X^\star(K_{X/Y}+ \Delta_{X})+ (m_0 -1)(\pi_Y\circ p')^\star K_Y
\end{equation}
whose restriction to the fiber ${X'_z}$ is precisely $\wh u$ in \eqref{ext4}. Indeed, we are using 
here the fact that $X'_z$ is given by the inverse image of the equations \eqref{sg42}, together with Lemma \ref{int}.

Let $(u_j)$ be a basis of the space of global sections of the bundle 
$m\pi_X^\star(K_{X/Y}+ \Delta_{X})|_{X'_z}$ and let $(U_j)$ be the sections
of \eqref{ext3} obtained by the procedure explained above. If we denote by $(\tau_k)$  
a basis of the bundle $(m-m_0)K_Y$, then the family of sections 
$$U_j\otimes (\pi_Y\circ p')^\star \tau_k$$
are linearly independent. Moreover, they induce holomorphic sections of
the bundle $K_{X'} + \Delta_{X'} + (m-1) p_X ^\star (K_X+ \Delta_X)$. Combining this with \eqref{isomor} and \eqref{fibrq}, we get 
$$\kappa (K_{X'} +\Delta_{X'}  + E_Y) \geq \kappa (K_F +\Delta_F) + \kappa (Y) .$$
Notice that $E_Y$ is the exceptional loci of $\pi_X$, the proof of Theorem \ref{conseqflat} is thus finished.

\end{proof}
\medskip

\begin{remark} An interesting question related to the proof just finished is the following. We assume that the 
bundle $\det F_m$ is topologically trivial; it follows that $F_m$ itself is flat. If we consider 
a birational map $\pi_Y: Y^\prime\to Y$ and $p^\prime: X'\to Y^\prime$ as in \eqref{sg21}, does it follows that
$F^\prime_m$ is flat as well? This is clearly the case if $\pi_Y$ is finite instead of birational. 
\end{remark}
\medskip

\noindent Recently, the first named author of the present article has obtained the following result.

\begin{theorem}\label{jc_cm2}\cite{jc_cm2}
Let $p: X\rightarrow Y$ be an algebraic fiber space, where $Y$ is a projective variety of dimension at most two.
Let $\Delta$ be an effective $\bQ$-divisor such that the pair $(X, \Delta)$ is klt, and let $F$ be a generic fiber of $p$.
Then
\begin{equation}\label{}
\kappa (K_X +\Delta) \geq \kappa (K_F +\Delta_F)+ \kappa(Y), 
\end{equation}
where $\Delta_F = \Delta|_F$.
\end{theorem}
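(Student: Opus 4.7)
The plan is to follow the same dichotomy as in the proof of Theorem \ref{mainourthem}, but now applied more flexibly to the low-dimensional base $Y$, combining Theorem \ref{conseqbig} (the big-determinant case) and Theorem \ref{conseqflat} (the flat-determinant case) with a case analysis depending on $\dim Y\leq 2$. The reduction $\kappa(Y)=-\infty$ gives the right-hand side of the desired inequality equal to $-\infty$ and the inequality is vacuous, so we assume $\kappa(Y)\geq 0$ throughout.

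For $\dim Y=1$ the argument is immediate. Writing $F_m:=p_\star(mK_{X/Y}+m\Delta)$ for $m$ large and divisible, the line bundle $\det F_m$ is pseudo-effective on the smooth curve $Y$, hence of non-negative degree. Either $\deg(\det F_m)>0$, in which case $\det F_m$ is ample (hence big) and Theorem \ref{conseqbig} applies; or $\deg(\det F_m)=0$, in which case $\det F_m$ is topologically trivial and Theorem \ref{conseqflat} applies.

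For $\dim Y=2$ I would study the positivity of $\det F_m$ through its Zariski decomposition $\det F_m=P+N$, where $P$ is the nef positive part and $N$ is effective with negative-definite support. When $P$ is big ($P^2>0$), so is $\det F_m$, and Theorem \ref{conseqbig} concludes. When $P$ is numerically trivial, I would argue, after a suitable birational modification of $Y$ that absorbs the negative part $N$ combined with a finite \'etale cover killing the finite group $\Pic^\tau(Y)/\Pic^0(Y)$, that $\det F_m$ becomes topologically trivial, so that Theorem \ref{conseqflat} applies. The genuinely new case is when $P$ has numerical dimension exactly one, i.e.\ $P\not\equiv 0$ and $P^2=0$. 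In this case the plan is to produce an algebraic fiber space $\sigma:Y\to C$ onto a smooth curve such that $P\sim_{\bQ}\sigma^\star H$ with $H$ ample on $C$, by showing the semi-ampleness of $P$. One then reduces the problem to the curve case via the composition $\sigma\circ p:X\to C$, together with an induction on the fibres of $\sigma$, piecing the two Iitaka inequalities together by the classical easy additivity along a fibration over a curve.

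The main obstacle is expected to be the numerical-dimension-one case on the surface: semi-ampleness of a nef $\bQ$-divisor with zero self-intersection on a surface is not automatic. The way forward should be to exploit the extra structure carried by the direct image sheaf: combining the positivity statement of Theorem \ref{positivelemma} with the flatness result of Theorem \ref{flat} applied to the direct image over the fibres of a tentative Iitaka-type fibration of $P$, one should force $P$ to be a genuine pull-back from a curve. A secondary difficulty is the delicate birational modifications needed to absorb the negative part of the Zariski decomposition while preserving the isometric identifications of Remark \ref{natural}; one expects that the framework developed in Section 3 is flexible enough to handle these, at the cost of replacing $Y$ by a suitable higher model and applying the now-standard extension and $L^2$-metric arguments to the resulting fibration.
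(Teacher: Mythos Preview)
First, a clarification: the present paper does not prove this statement. Theorem \ref{jc_cm2} is quoted from \cite{jc_cm2}, and the only information given here is that ``the proof in \cite{jc_cm2} uses the techniques developed in the present article, together with some results in the theory of orbifolds of Calabi-Yau type.'' So there is no in-paper argument to compare against directly, only this hint.

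Your $\dim Y=1$ case is correct and clean: on a smooth curve, pseudo-effective means non-negative degree, so $\det F_m$ is either ample or has $c_1=0$, and Theorems \ref{conseqbig} and \ref{conseqflat} cover the two alternatives.

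For $\dim Y=2$ there are two substantial gaps.

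\emph{The case $P\equiv 0$, $N\neq 0$.} Your plan to ``absorb $N$ by a birational modification of $Y$'' does not work as stated. By Remark \ref{natural} the determinant of the direct image on any smooth birational model $Y'$ agrees with $\pi_Y^\star\det F_m$ outside a set of codimension two, hence (on a smooth surface) everywhere; a non-trivial effective class stays non-trivial under pullback. Contracting the support of $N$ produces a singular surface, to which Theorems \ref{flat} and \ref{conseqflat} do not apply as written. You would need an independent argument forcing $N=0$, or a different route entirely. (Even once $\det F_m$ is numerically trivial, your \'etale-cover step only kills the torsion of $NS(Y)$; it does not by itself move a class from $\Pic^\tau$ into $\Pic^0$.)

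\emph{The case $\nu(P)=1$.} You correctly identify this as the crux. Semi-ampleness of a nef $\bQ$-divisor with $P^2=0$ on a surface genuinely fails in general (think of irrational nef classes on abelian or K3 surfaces), and your proposed use of Theorems \ref{positivelemma} and \ref{flat} ``on the fibres of a tentative Iitaka-type fibration of $P$'' presupposes the fibration whose existence is exactly what is in question. Nothing in the present paper produces such a fibration from $P$ alone.

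The hint about ``orbifolds of Calabi-Yau type'' indicates that \cite{jc_cm2} brings in an additional structural ingredient, outside the big/flat dichotomy developed here, precisely to handle the intermediate positivity on the surface base. Your outline captures the overall architecture correctly (reduce to Theorems \ref{conseqbig} or \ref{conseqflat} whenever possible), but the substantive new input needed for $\dim Y=2$ is not visible in the proposal.
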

This settles the Iitaka conjecture in the case of a two-dimensional base.
The proof in \cite{jc_cm2} uses the techniques developed in the present article, together with some results in the theory of orbifolds of Calabi-Yau type.

\end{document}